\numberwithin{equation}{section}
\newcommand{\F}{\mathbb{F}}
\newcommand{\Q}{\mathbb{Q}}
\newcommand{\C}{\mathbb{C}}
\newcommand{\N}{\mathbb{N}}
\newcommand{\cO}{\mathcal{O}}
\newcommand{\CG}{\mathscr{G}}
\newcommand{\CH}{\mathscr{H}}
\newcommand{\CX}{\mathscr{X}}
\newcommand{\CY}{\mathscr{Y}}
\newcommand{\CZ}{\mathscr{Z}}
\renewcommand{\bf}{\textbf}
 \renewcommand{\to}{\rightarrow}
\newcommand{\Hom}{{\rm Hom}}
\newcommand{\Spec}{\operatorname{Spec}}
\newcommand{\Tr}{\operatorname{Tr}}
\newcommand{\Ad}{\mathrm{Ad}}
\newcommand{\uG}{\underline{G}}
\newcommand{\uH}{\underline{H}}
\newcommand{\uK}{\underline{K}}
\newcommand{\uY}{\underline{Y}}
\newcommand{\uZ}{\underline{Z}}
\newcommand{\R}{\mathbb{R}}
\newcommand{\Z}{\mathbb{Z}}
\newtheorem{thm}{Theorem}[section]
\newtheorem{prop}[thm]{Proposition}
\newtheorem{lem}[thm]{Lemma}
\newtheorem{cor}[thm]{Corollary}
\newtheorem{rmk}[thm]{Remark}
\theoremstyle{definition}
\newtheorem{defn}[thm]{Definition}
\newtheorem{remk}[thm]{Remark}
\newtheorem{cla}[thm]{Claim}
\newtheorem*{ack}{Acknowledgments}
\begin{document}

\title{Deformation theory\\ and finite simple quotients of triangle groups I}

\author{Michael Larsen, Alexander Lubotzky, Claude Marion}

\date{\today}
\maketitle

\textbf{Abstract}
Let $2 \leq a \leq b \leq c \in \mathbb{N}$ with $\mu=1/a+1/b+1/c<1$ and let $T=T_{a,b,c}=\langle x,y,z: x^a=y^b=z^c=xyz=1\rangle$ be the corresponding hyperbolic triangle group. Many papers have been dedicated to the following question: what are the finite (simple) groups which appear as quotients of $T$? (Classically, for $(a,b,c)=(2,3,7)$ and more recently also for general $(a,b,c)$.) These papers have used either explicit constructive methods or probabilistic ones. The goal of this paper is to present a new approach based on the theory of representation varieties (via deformation theory). As a corollary we essentially prove a conjecture of Marion \cite{Marionconj}  showing that various finite simple groups are not quotients of $T$, as well as positive results showing that many finite simple groups are quotients of $T$.

\section{Introduction}

Let $a,b,c$ be a triple of positive integers. A group $G$ is said to be an $(a,b,c)$-group if it is generated by two elements of orders dividing $a$ and $b$ respectively, whose product has order dividing $c$, in other words, it is a quotient of the triangle group
\begin{equation}\label{e:tg}
T=T_{a,b,c}=\langle x, y, z:x^a=y^b=z^c=xyz=1\rangle.
\end{equation}

Many papers have been devoted to the question of understanding which (finite) groups are $(a,b,c)$-groups and especially which finite simple groups are. If $1/a+1/b+1/c\geq 1$ then $T$ is soluble or isomorphic to the alternating group ${\rm Alt}_5$ and the finite quotients of $T$ are well understood (see \cite{Conder1990}). We assume
\begin{equation*}%\label{e:hyp}
\mu=\frac1a+\frac1b+\frac1c<1,
\end{equation*}
so that $T$ is a cocompact Fuchsian group (of genus 0) and more specifically a hyperbolic triangle group. We call $(a,b,c)$ a \textit{hyperbolic triple of integers} and without loss of generality,  we suppose $a\leq b \leq c$. Recall that
\begin{equation}\label{e:hypub}
\mu=\frac1a+\frac1b+\frac1c\leq \frac{41}{42},
\end{equation}
where the upper bound $41/42$ is attained only if $(a,b,c)=(2,3,7)$.

A considerable effort has been made to try to classify the finite $(2,3,7)$-groups, also referred as \textit{Hurwitz groups}; for a recent survey, see \cite{Conder}. Recently, attention has also  been given to  other hyperbolic triples $(a,b,c)$, see for example \cite{Marionpsl2,Marionconj,LLM,Marionex,Marionpsl3one,Marionpsl3two,Marionpsl3three,FMP, GLL,GM}, where deterministic and probabilistic results on $(a,b,c)$-generation of finite simple groups of Lie type are obtained mainly in the special case where $a$, $b$ and $c$ are  prime numbers. Turning to general hyperbolic triples of integers, any finite simple group, being 2-generated, is a quotient of some triangle group $T$ and in fact can be so realized in many independent ways. See for example   \cite{GLL,FMP,GM} and the references therein establishing that every finite simple group other than ${\rm Alt}_5$ admits an (unmixed) Beauville structure.

The vast literature showing that some finite (simple) groups $G$ are $(a,b,c)$-groups has so far followed two main lines: either one gives two explicit generators of orders dividing $a$ and $b$ and whose product has order dividing $c$, or one uses probabilistic methods to show that such generators exist. In the latter approach, one typically uses character-theoretic methods to estimate the number of homomorphisms
from $T_{a,b,c}$ to $G$
and then
uses a knowledge of the maximal subgroups of $G$ to get a lower bound on the number of these homomorphisms which are surjective---see \cite{GLL} for a typical example.

In this paper we present a third method to prove (or disprove) that various groups are $(a,b,c)$-groups. Our method is based on deformation theory of representation varieties.
In a previous paper \cite{LL}, we used similar methods to study the representation variety $\Hom(\Gamma, G)$ where $\Gamma$ is a general Fuchsian group and $G$ is a quasisimple real Lie group.
In the current paper we use these methods to study the representation variety $\Hom(\Gamma,\underline{G})$ where this time $\Gamma=T=T_{a,b,c}$ is a (hyperbolic) triangle group and $\uG$ denotes a  quasisimple  algebraic group  defined over a field $\F$ (i.e., a semisimple algebraic group over $\F$ which is absolutely simple modulo its center).

A key point is: there exist such an infinite field $\mathbb{F}$ and a representation $\rho \in \Hom(T,\underline{G}(\F))$ with a Zariski dense image if and only if for infinitely many $q$, $\underline{G}(q)$ is an $(a,b,c)$-group. Moreover if ${\rm char}(\mathbb{F})=0$ and if in addition $\rho$ is not locally rigid then $T$ is \textit{saturated with such finite (simple) quotients} (see Definition \ref{d:sat} below).

Given an irreducible Dynkin diagram  $X$, we say that a quasisimple algebraic group $\uG$ defined over a field $\mathbb{F}$ is \emph{of type $X$}, if the associated diagram is $X$. By a slight abuse of notation, we write $X(\mathbb{F})$ for $\uG(\mathbb{F})$ where $\uG$ is the adjoint simple form, and for finite fields $\mathbb{F}=\mathbb{F}_q$,  we write $X(q)$ for the finite simple (untwisted) Chevalley group of type $X$ over $\mathbb{F}_q$. If $G$ is a  simple compact Lie  group, then $G=\uG(\mathbb{R})$ for some simple $\mathbb{R}$-group $\uG$ and in this case we say that $G$ is of type $X$ if $\uG$ is of type $X$.

\begin{defn}\label{d:sat}
Given an irreducible  Dynkin diagram  $X$, we say that $T$ is saturated with finite  quotients of type $X$ if there exist $p_0$ and $e$ in $\mathbb{N}$ such that for all  $p>p_0$, the finite simple group $X(p^{e\ell})$ is a quotient of $T$ for every $\ell \in \mathbb{N}$ and for a set of positive density of primes $p$, we even have $X(p^\ell)$ is a quotient of $T$ for every $\ell \in \mathbb{N}$.
\end{defn}

 Our main result is the following:

\begin{thm}\label{t:main}
For every hyperbolic triangle group $T=T_{a,b,c}$ and every irreducible Dynkin diagram $X$, $T$ is saturated with finite  quotients of type $X$, except possibly if
$(X,T)$ appears in Table \ref{tab:main} below.
\end{thm}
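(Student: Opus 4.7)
The plan is to leverage the criterion emphasized after Definition~\ref{d:sat}: it suffices, for each pair $(X,T)$ outside Table~\ref{tab:main}, to exhibit a representation $\rho: T\to \uG(\mathbb{C})$ (with $\uG$ of type $X$) whose image is Zariski dense and which is not locally rigid. Saturation then follows from the deformation-theoretic machinery. A representation of $T$ into $\uG$ is the same datum as a triple $(A,B,C)\in \mathcal{C}_A\times\mathcal{C}_B\times\mathcal{C}_C$ with $A^a=B^b=C^c=1$ and $ABC=1$, where $\mathcal{C}_A,\mathcal{C}_B,\mathcal{C}_C$ are conjugacy classes of $\uG$. So the entire construction reduces to choosing three conjugacy classes of elements of the correct orders and producing a genuine triple in them.

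The main tool is a standard dimension count. For a triple generating a Zariski-dense subgroup (hence with finite centralizer in $\uG$, since $\uG$ is quasisimple), the expected dimension of the representation variety $\Hom(T,\uG)$ at $\rho$ is
\[
\dim \mathcal{C}_A+\dim \mathcal{C}_B+\dim \mathcal{C}_C-\dim \uG,
\]
so after modding out by $\uG$-conjugation the moduli space of deformations has expected dimension $\dim \mathcal{C}_A+\dim \mathcal{C}_B+\dim \mathcal{C}_C-2\dim \uG$. I would therefore first look for classes with
\[
\dim \mathcal{C}_A+\dim \mathcal{C}_B+\dim \mathcal{C}_C\;>\;2\dim \uG,
\]
taking the elements $A,B,C$ as close to regular semisimple as their orders $a,b,c$ permit, so that $\dim \mathcal{C}_\ast$ is as large as possible (ideally $\dim \uG-\mathrm{rank}(\uG)$). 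When $a,b,c$ are not too small relative to the Coxeter number of $X$ this is easy; the genuinely tight cases are the small-order ones, and these are precisely what produces the exceptions collected in Table~\ref{tab:main}.

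Next, I would verify that the triple can actually be realized (the trace/character identity corresponding to $ABC=1$ cuts out a non-empty subvariety of $\mathcal{C}_A\times \mathcal{C}_B$ when the dimension bound above holds), and then that at a generic point the image is Zariski dense. For the latter I would run through the closed connected subgroups of $\uG$ containing a putative image: the constraints on the orders and on the trace spectra of $A$, $B$, $C$ rule out all proper Levi, parabolic, or maximal reductive subgroups in the generic choice, by a case-by-case inspection using the classification of maximal positive-dimensional subgroups of simple algebraic groups. Once a Zariski-dense $\rho$ is in hand, non-rigidity is automatic from the dimension formula, and the $H^1$-computation combined with the arithmetic specialization arguments of \cite{LL} yields saturation.

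The hard part will be the Zariski-density step, and more precisely pinning down the borderline cases. The dimension inequality is a clean necessary condition, but when equality or near-equality occurs one has to argue that the triple cannot be trapped in a proper subgroup of the same, or close, dimension---subfield subgroups, maximal rank subgroups coming from sublattices of the root lattice, and small exceptional embeddings (e.g.\ $G_2\hookrightarrow B_3$, $A_1$-subgroups arising from nilpotent orbits in exceptional types) are the main culprits, and these are exactly the sources of the entries of Table~\ref{tab:main}. I would handle the classical types uniformly by exploiting the structure of the natural representation together with the constraints imposed by the orders $a,b,c$ and the product relation, and deal with the five exceptional types $G_2, F_4, E_6, E_7, E_8$ individually, using known tables of element orders and of maximal subgroups to rule out the proper subgroup possibilities outside the announced list of exceptions.
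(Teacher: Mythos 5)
Your proposal takes a genuinely different route from the paper, and in its current form it has a gap that undermines the construction.

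The paper does not choose three large conjugacy classes directly. Instead it starts from the \emph{principal} homomorphism $T\to \mathrm{SO}(3,\R)\hookrightarrow \uG(\R)$ (using the fact, from \cite{LL}, that all hyperbolic $T$ outside the six exceptional ones in $S$ admit $\mathrm{SO}(3)$-dense representations), computes $\dim H^1(T,\mathrm{Ad}\circ\rho_0)$ explicitly via Weil's formula in terms of the exponents of $\uG$ (Proposition~\ref{p:dimpone}), and then \emph{deforms upward along a ladder of maximal subgroups} $\mathrm{SO}(3)\subset \uH\subset \uG$, at each stage applying Theorem~\ref{t:ll} to pass from a dense representation into $\uH$ to a dense, non-rigid one into $\uG$ whenever $\dim H^1(T,\mathfrak{h})<\dim H^1(T,\mathfrak{g})$. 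This construction simultaneously guarantees existence (the seed is explicit), density (it escapes the maximal subgroup by a dimension count), and non-rigidity. Table~\ref{tab:main} is then read off as the exact list of $(X,T)$ where either $T\in S$ or, at some rung of the ladder, the $H^1$ inequality degenerates to equality.

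The gap in your approach is at the step ``verify that the triple can actually be realized.'' The inequality $\dim\mathcal{C}_A+\dim\mathcal{C}_B+\dim\mathcal{C}_C>2\dim\uG$ gives the \emph{expected} dimension of the fiber of the product map over $1\in\uG$, but it does not imply that this fiber is non-empty; multiplicative Horn-type obstructions can and do occur, and this is not a pure dimension count. Even granting non-emptiness, nothing forces the variety to contain a point whose image is Zariski dense — you concede this is ``the hard part'' but offer only a sketch, and a case-by-case subgroup exclusion starting from an arbitrary triple of classes is much harder to close than the paper's argument, where density is inherited from the seed via maximality. Relatedly, ``non-rigidity is automatic from the dimension formula'' is not quite right: $\Hom(T,\uG)$ may be singular at $\rho$, and one needs a nonsingularity statement (here via Theorem~\ref{t:weil}, which requires the no-coinvariants hypothesis) plus Weil's Theorem~\ref{t:locrig} to convert $H^1\neq 0$ into genuine local non-rigidity. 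Finally, your method would not naturally reproduce the very specific list of exceptions in Table~\ref{tab:main} (e.g.\ the sporadic ranks for $D_r$ with $(2,3,7)$), which in the paper emerge from the particular exponents added at the last ladder step rather than from generic conjugacy-class dimensions.
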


Let us say right away that Theorem \ref{t:main} is not the best result we can get via this method. We chose to illustrate in this paper the main point, which is the relevance of deformation theory to the problem of characterizing finite simple quotients of triangle groups. In a second paper, we will exploit the method further to get stronger results, paying the price of  being more technical. In particular in \cite{LLM2} we show that the six possibly exceptional  hyperbolic triangle groups in Table \ref{tab:main}, namely those in
$$S=\{T_{2,4,6},T_{2,6,6},T_{2,6,10},T_{3,4,4},T_{3,6,6}, T_{4,6,12}\}$$
are not really exceptions.

%\bf{(I am not sure but I think that maybe we are not being consistent in the paragraph below. We are talking about algebraic groups and real points at the same time, e.g. ${\rm SO}(3)$ denotes the algebraic group but $G=\uG(\R)$. Shall we write ${\rm SO}%(3,\R)$ here instead of ${\rm SO}(3)$?)}
To prove the theorem we want to get Zariski dense representations of $T$ into a group $\uG$ of type $X$ which are not locally rigid. To this end we will start with a representation of $T$ to  ${\rm SO}(3,\R)$ and then  we will compose with the principal homomorphism from ${\rm SO}(3,\R)$ to  a compact simple Lie group $G$ of type $X$ and deform the resulting (non-dense) homomorphism
$T\to G$. For this reason we have to exclude the six triangle groups in $S$, which are the (only) hyperbolic triangle groups without ${\rm SO}(3,\R)$-dense representations (see \cite{LL}). In \cite{LLM2} we will push forward the general method in order to include these six groups and to eliminate some more cases of Table \ref{tab:main}. The way to obtain a Zariski dense representation of $T$ to  $G=\uG(\mathbb{R})$ is by deforming the representation $T\rightarrow G$ induced from the principal homomorphism ${\rm SO}(3,\R) \rightarrow G$. This is done in one step if ${\rm SO}(3,\R)$ is maximal in $G$, but in some cases we have to do it in two or even three steps (through ``steps in the ladder"---see \S\ref{s:dph}).

Going through Table \ref{tab:main} we can deduce:

\begin{table}
\caption{Possible exceptions  to Theorem \ref{t:main}}\label{tab:main}
\center
\begin{tabular}{|l|l|l|}
\hline
$X$& $(a,b,c)$ & $r$\\
\hline
any & $(2,4,6)$, $(2,6,6)$, $(2,6,10)$ & \\
& $(3,4,4)$, $(3,6,6)$, $(4,6,12)$ & \\
\hline
$A_r$  & $(2,3,7)$ & $r \leq 19$ \\
& $(2,3,8)$ & $r \leq 13$\\
& $(2,3,c)$, $c \geq 9$ & $r\leq 7$\\
& $(2,4,5)$ & $r\leq 13$\\
%& $(2,4,6)$ & $r\leq 9$\\
& $(2,4,c)$, $c \geq 7$& $r \leq 5$\\
& $(2,5,5)$ & $r=6$\\
& $(2,b,c)$, $b \geq 5$ & $r\leq3$\\
& $(3,3,c)$, $c \geq 4$ & $r \in \{3,4,6\}$\\
& any & $r=1$\\
\hline
$B_3$ & $(2,3,c)$, $c\geq 7$ & \\
& $(3,3,c)$, $c \geq 4$ &\\
& $(2,4,5)$ & \\
& $(2,5,5)$ & \\
\hline
$C_2$ &  $(2,3,c)$, $c\geq 7$ & \\
& $(3,3,c)$, $c \geq 4$ & \\
\hline
 $D_r$ & $(2,3,7)$ & $r \in\{4,5,7,8,9,10,11,13,15,16,17,19,22,23,25,29,31,37,43\}$\\
$(r\geq 4)$& $(2,3,8)$& $r\in \{4,5,7,9,10,11,13,17,19,25\}$ \\
& $(2,3,9)$ & $r\in \{4,5,7,10,11,13,19\}$\\
& $(2,3,10)$ & $r\in \{4,5,7,11,13\}$\\
& $(2,3,11)$& $r \in\{4,5,7,13\}$\\
& $(2,3,12)$& $r \in\{4,5,7,13\}$\\
& $(2,3,c)$, $c\geq 13$ & $r\in \{4,5,7\}$\\
& $(2,4,5)$ & $r \in \{4,5,6,7,9,11,13,17,21\}$\\
%& $(2,4,6)$ & $r\in \{5,7,9,13\}$\\
& $(2,4,7)$ & $r \in \{5,9\}$\\
& $(2,4,8)$ & $r\in \{5,9\}$\\
& $(2,4,c)$, $c \geq 9$ & $r=5$\\
& $(2,5,5)$ & $r \in \{4,6,7,11\}$\\
& $(2,5,6)$ & $r=7$\\
%& $(2,6,6)$ & $r=7$\\
& $(3,3,4)$ & $r \in\{4,5,7,10,13\}$\\
& $(3,3,5)$ & $r \in\{4,7\}$\\
& $(3,3,6)$ & $r =\{4,7\}$\\
& $(3,3,c)$, $c\geq 7$ & $r=4$\\
%& $(3,4,4)$& $r=5$\\
& $(4,4,4)$& $r=5$\\
\hline
$G_2$ &$(2,4,5)$ & \\
& $(2,5,5)$& \\
\hline
$E_6$ & $(2,3,c)$, $c \in \{7,8\}$ & \\
& $(2,4,c)$, $c \in \{5,7,8\}$& \\
\hline
\end{tabular}
\end{table}

\begin{cor}\label{cor:first}
If $\mu=1/a+1/b+1/c<1/2$ then for every irreducible Dynkin diagram $X\neq A_1$, $T_{a,b,c}$ is saturated with finite  quotients of type $X$.
\end{cor}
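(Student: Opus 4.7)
The plan is to deduce the corollary directly from Theorem \ref{t:main} by inspection of Table \ref{tab:main}: under the hypotheses $\mu < 1/2$ and $X \neq A_1$, no pair $(X, T_{a,b,c})$ can match any entry in the table, so the theorem applies without exception. This reduces the proof to a finite case analysis checking $\mu$ against each row.

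The key observation is that every triple $(a,b,c)$ listed in Table \ref{tab:main} satisfies $\mu \geq 1/2$. Indeed, any triple with $a=2$ has $\mu = 1/2 + 1/b + 1/c > 1/2$, which handles $(2,4,6), (2,6,6), (2,6,10)$ from the first row together with every triple of the form $(2,b,c)$ appearing in the $A_r$, $B_3$, $C_2$, $D_r$, $G_2$ and $E_6$ rows. The triples with $a=3$ that appear are $(3,4,4)$, $(3,6,6)$, and $(3,3,c)$ for $c\geq 4$, with $\mu = 5/6$, $\mu = 2/3$, and $\mu = 2/3 + 1/c > 1/2$ respectively. The only triples with $a\geq 4$ appearing are $(4,4,4)$, with $\mu = 3/4$, and $(4,6,12)$, with $\mu = 1/2$ (which is not strictly less than $1/2$). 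Hence no triple listed in the table has $\mu < 1/2$.

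The remaining entry is the line ``any, $r=1$'' in the $A_r$ block, which records a possible exception for $X = A_1$ and arbitrary $T$; this is ruled out by the assumption $X \neq A_1$. Combining these observations, the hypotheses of the corollary force $(X, T)$ to lie outside Table \ref{tab:main}, and saturation follows from Theorem \ref{t:main}. The only ``obstacle'' is the purely mechanical bookkeeping of running through each row of the table; no new mathematical input is needed beyond the theorem itself.
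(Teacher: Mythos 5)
Your proof is correct and is exactly the intended derivation: the paper itself gives no separate argument for Corollary~\ref{cor:first} beyond the remark ``Going through Table~\ref{tab:main} we can deduce,'' and your row-by-row verification that every listed triple has $\mu \geq 1/2$ (with the sole $A_1$ row ruled out by $X \neq A_1$) is precisely that bookkeeping. The arithmetic checks out, including the borderline case $(4,6,12)$ with $\mu = 1/2$ exactly, which the strict inequality in the hypothesis excludes.
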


\begin{cor}
Assume $X \not \in \{A_r:1\leq r\leq 7\}\cup \{B_3\}\cup\{C_2\}\cup\{D_r: r=4,5,7\}$. Then for almost every hyperbolic triple $(a,b,c)$, the group $T=T_{a,b,c}$ is saturated with finite  quotients of type $X$.
\end{cor}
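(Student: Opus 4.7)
The plan is to derive the corollary directly from Theorem \ref{t:main} by inspecting Table \ref{tab:main}. First, I interpret "almost every hyperbolic triple" as "all but finitely many", the natural convention since the set of hyperbolic triples is countably infinite and carries no canonical probability measure. Under Theorem \ref{t:main}, the hyperbolic triples $(a,b,c)$ for which $T_{a,b,c}$ might fail to be saturated with finite quotients of type $X$ are precisely those such that $(X,T_{a,b,c})$ appears as an entry of Table \ref{tab:main}, counting the universal top row (which contributes the six triples of
\[
S=\{T_{2,4,6},T_{2,6,6},T_{2,6,10},T_{3,4,4},T_{3,6,6},T_{4,6,12}\}
\]
for every $X$). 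It therefore suffices to show that for each irreducible Dynkin type $X$ outside the excluded set $E=\{A_r:1\leq r\leq 7\}\cup\{B_3\}\cup\{C_2\}\cup\{D_r:r\in\{4,5,7\}\}$, only finitely many hyperbolic triples appear with $X$ in Table \ref{tab:main}.

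I would then work block by block. For $X=A_r$ with $r\geq 8$, the rank bounds in the $A_r$-block confine the exceptional triples to a subset of $\{(2,3,7),(2,3,8),(2,4,5)\}\cup S$, since every other $A_r$-row forces $r\leq 7$. For $X\in\{E_7,E_8,F_4\}$ and for $B_r$ with $r\neq 3$ and $C_r$ with $r\neq 2$, no row of Table \ref{tab:main} carries $X$ beyond the universal row, so the exceptional set is just $S$. For $X=D_r$ with $r\geq 4$ and $r\notin\{4,5,7\}$, every entry of the $D_r$-block restricts $r$ to an explicitly finite set, so for the given value of $r$ only finitely many triples $(a,b,c)$ remain compatible. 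For $X=G_2$ the only additional triples are $(2,4,5)$ and $(2,5,5)$; for $X=E_6$ they are $(2,3,c)$ with $c\in\{7,8\}$ and $(2,4,c)$ with $c\in\{5,7,8\}$. In each case the exceptional set of triples is finite, establishing the corollary.

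The argument is therefore pure bookkeeping rather than any new piece of geometry; the only modest obstacle is confirming that the above case list exhausts all admissible $X$ and pinning down the convention on "almost every". The genuine content of the corollary is the qualitative observation that the rows of Table \ref{tab:main} involving infinitely many triples are exactly those whose type $X$ lies in the excluded set $E$, and every other row of the table restricts to a finite list of triples.
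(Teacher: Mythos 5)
Your proposal is correct and matches the paper's (implicit) approach: the paper gives no explicit proof of this corollary but introduces it with ``Going through Table \ref{tab:main} we can deduce,'' so the intended argument is precisely the bookkeeping you carry out, namely verifying row by row that for each admissible $X$ outside the excluded set, the entries of Table \ref{tab:main} involving $X$ confine $(a,b,c)$ to a finite list (together with the six universal triples in $S$). Your case analysis — $A_r$ for $r\geq 8$ leaving at most $(2,3,7)$, $(2,3,8)$, $(2,4,5)$; $B_r$ ($r\neq 3$), $C_r$ ($r\neq 2$), $E_7$, $E_8$, $F_4$ leaving nothing beyond $S$; $D_r$ for $r\notin\{4,5,7\}$ avoiding the three infinite $c$-families; and $G_2$, $E_6$ contributing only explicitly listed triples — is exactly right.
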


Many of our results are new even in the classical case $(a,b,c)=(2,3,7)$.

\begin{cor}
$T_{2,3,7}$ is saturated with finite  quotients of type $X$ except possibly if $X \in S$ where
\begin{multline*}
S =   \{A_r:1\leq r\leq 19\}\cup \{B_3\}\cup\{C_2\}\\
\cup\{D_r: r\in \{4,5,7,8,9,10,11,13,15,16,17,19,22,23,25,29,31,37,43\}\}\cup\{E_6\}.
\end{multline*}
 In particular, it is saturated with finite quotients of type $E_8$.
\end{cor}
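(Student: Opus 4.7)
The plan is to obtain this corollary as a direct bookkeeping consequence of Theorem \ref{t:main}: one simply restricts the statement to $T = T_{2,3,7}$ and collects from Table \ref{tab:main} every row in which the triple $(2,3,7)$ appears as a possible exception.

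First I would observe that $T_{2,3,7}$ is not a member of the set $S = \{T_{2,4,6}, T_{2,6,6}, T_{2,6,10}, T_{3,4,4}, T_{3,6,6}, T_{4,6,12}\}$, so the top ``any $X$'' row of Table \ref{tab:main} contributes nothing. Hence the possibly excluded types $X$ are exactly those for which the pair $(X, T_{2,3,7})$ occurs somewhere in the remainder of the table. Going through the rows one by one: the $A_r$-block contributes $A_r$ for $r \leq 19$ (both from the explicit $(2,3,7)$ row and, when $r = 1$, from the final ``any'' line); the $B_3$-block contributes $B_3$ via the row ``$(2,3,c)$, $c \geq 7$''; similarly the $C_2$-block contributes $C_2$; the $D_r$-block contributes exactly the listed set $r \in \{4,5,7,8,9,10,11,13,15,16,17,19,22,23,25,29,31,37,43\}$; and the $E_6$-block contributes $E_6$ through the ``$(2,3,c)$, $c \in \{7,8\}$'' row. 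No other row of the table mentions the triple $(2,3,7)$. Taking the union of these types yields exactly the set $S$ in the statement of the corollary, and Theorem \ref{t:main} then gives saturation with finite quotients of type $X$ for every $X \notin S$.

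For the ``in particular'' assertion, I would just note that $E_8$ does not appear anywhere in Table \ref{tab:main}, so $E_8 \notin S$, and hence Theorem \ref{t:main} applied to $(X, T) = (E_8, T_{2,3,7})$ yields that $T_{2,3,7}$ is saturated with finite simple quotients of type $E_8$.

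There is no substantial obstacle here: the corollary is a tabulation exercise, and the only thing to be careful about is not to miss a row (in particular the catch-all ``any $\mid$ $r = 1$'' line for $A_1$, and the fact that the ``$(2,3,c)$, $c \geq 7$'' rows for $B_3$, $C_2$ and the ``$(2,3,c)$, $c \in \{7,8\}$'' row for $E_6$ do cover $c = 7$). The nontrivial content is entirely in Theorem \ref{t:main}.
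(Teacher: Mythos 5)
Your proposal is correct and takes the same route the paper intends: the paper presents this corollary (along with the two preceding it) as an immediate consequence of Theorem \ref{t:main} obtained ``going through Table \ref{tab:main},'' and your row-by-row tabulation for the triple $(2,3,7)$ reproduces exactly that reading, including the observation that $E_8$ does not occur anywhere in the table.
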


This last sentence answers a question we were asked by Guralnick.

Let us however mention one weakness of our method: it uses a non-explicit deformation $\rho$ of  a starting representation $\rho_0$. We therefore do not have a good control on the ring of definition of $\rho$. As a result,  we cannot give an explicit upper bound for $p_0$ or $e$ in Definition \ref{d:sat}, and so our method cannot give a result of the kind proved in \cite{LTW} stating that for $r \geq 286$ every finite simple group of type $A_r$ is a quotient of $T_{2,3,7}$.

Finally, let us call the attention of the reader to a slightly surprising corollary of our work. In \cite{DTZ} (see also \cite{Marionconj}) it was shown that many simply connected finite groups of classical type are not Hurwitz. For example, if  $n \in \{4,6,8,10,12,14,16,18,22\}$, then  ${\rm Sp}_n(q)$ with $q$ odd is never Hurwitz. Our results show that if the corresponding simple versions ${\rm PSp}_n(q)$ of the above groups are considered instead, infinitely many of them are Hurwitz groups (for any fixed $n\neq 4$).

It is interesting to compare our Theorem \ref{t:main} with a conjecture of Liebeck and Shalev proposed in \cite{LS}  (and proved for Fuchsian groups of genus at least 2) which states that for any  Fuchsian group $\Gamma$ there exists an integer $r(\Gamma)$ such that if $G$ is a finite simple classical group of rank at least $r(\Gamma)$ then a randomly chosen homomorphism in $\Hom(\Gamma,G)$ is an epimorphism with probability tending to 1 as $|G|\to \infty$.

This is for the positive side. On the negative side we essentially prove a conjecture of Marion proposed in \cite{Marionconj}: in that paper he studied $(a,b,c)$-generation of finite quasisimple groups of Lie type in the special case where $(a,b,c)$ is a triple of primes. Denoting by $\delta_m^{\uG}$  the dimension of the subvariety $\uG_{[m]}$ of $\uG$ (defined over $\mathbb{F}$ of characteristic $p\geq 0$) consisting of elements of order dividing $m$, he showed that if $p>0$, and
\begin{equation*}%\label{e:rig}
\delta_a^{\uG}+\delta_b^{\uG}+\delta_c^{\uG}<2 \dim \uG,
\end{equation*}
then a finite quasisimple group of the form $G=\underline{G}(p^\ell)$ is never an $(a,b,c)$-group. He also conjectured that if
\begin{equation}
\label{e:rigid}
\delta_a^{\uG}+\delta_b^{\uG}+\delta_c^{\uG}=2 \dim \uG,
\end{equation}
then there are only finitely many positive integers $\ell$ such that $\underline{G}(\F_{p^\ell})$ is an $(a,b,c)$-group.
We use the following definition, following \cite{Marionconj}:

\begin{defn}
A triple $(a,b,c)$ is \emph{rigid} for $\uG$ if (\ref{e:rigid}) holds.
\end{defn}

If (\ref{e:rigid}) does not hold,  $(a,b,c)$ is said to be \emph{reducible} (respectively, \emph{nonrigid}) for $\uG$ according as $\delta_a^{\uG}+\delta_b^{\uG}+\delta_c^{\uG}$ is less (respectively, greater) than $2\dim \uG$.

Note that this is not the same as Thompson's rigidity condition (see \cite{Vol}) but  is related to it (see \cite{SV,Marionconj,LLM}).

\begin{thm}\label{t:marconj}
Let $\uG/\F_p$ be a quasisimple algebraic group of type $X$ and let $d$ be the determinant of the Cartan matrix of $X$. If $(a,b,c)$ is rigid for $\uG$  and $p\nmid abcd$ then there are only finitely many positive integers $\ell$ such that $\uG(\mathbb{F}_{p^\ell})$ is a quotient of $T=T_{a,b,c}$.
Moreover, if this holds in the case that $\uG$ is adjoint, then only finitely many finite simple groups of type $X$ and characteristic $p$ are $(a,b,c)$-groups.
\end{thm}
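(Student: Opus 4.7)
I would argue by contradiction. Suppose that infinitely many $\ell$ admit a surjection $\rho_\ell\colon T\twoheadrightarrow\uG(\F_{p^\ell})$; composing with $\uG(\F_{p^\ell})\hookrightarrow\uG(\overline{\F_p})$ yields Zariski dense homomorphisms $\tilde\rho_\ell\colon T\to\uG(\overline{\F_p})$ encoded by triples $(x_\ell,y_\ell,z_\ell)\in\uG(\overline{\F_p})^3$ satisfying $x_\ell^a=y_\ell^b=z_\ell^c=1$ and $x_\ell y_\ell z_\ell=1$. Because $p\nmid abc$, each coordinate is semisimple, and the $\uG$-conjugacy classes of semisimple elements of a fixed order are finite in number: they are parametrised by $m$-torsion of a fixed maximal torus modulo the Weyl group, where the condition $p\nmid d$ ensures these torsion subgroups are étale of the expected size. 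Pigeonholing on $\ell$, I may assume all these triples lie in one fixed triple of geometric conjugacy classes $(C_a,C_b,C_c)$.

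I then work with the variety
\[
V=\{(x,y,z)\in C_a\times C_b\times C_c:xyz=1\},
\]
for which the expected codimension of the equation $xyz=1$ gives
\[
\dim V\le\dim C_a+\dim C_b+\dim C_c-\dim\uG\le \delta_a^{\uG}+\delta_b^{\uG}+\delta_c^{\uG}-\dim\uG=\dim\uG,
\]
where the last equality is exactly the rigidity hypothesis. The diagonal conjugation action of $\uG$ on $V$ has, at a Zariski dense triple, stabiliser $Z_{\uG}(\langle x,y,z\rangle)=Z(\uG)$, a finite group scheme. Thus every $\uG$-orbit through a Zariski dense triple in $V$ has dimension $\dim\uG$, and since $\dim V\le\dim\uG$ there are only finitely many such orbits. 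Hence the triples $(x_\ell,y_\ell,z_\ell)$ lie in finitely many $\uG(\overline{\F_p})$-orbits; after a further pigeonhole, infinitely many of them are pairwise $\uG(\overline{\F_p})$-conjugate, so the images $\uG(\F_{p^\ell})=\langle x_\ell,y_\ell,z_\ell\rangle$ are pairwise conjugate subgroups of $\uG(\overline{\F_p})$ and therefore isomorphic as abstract groups. This contradicts the fact that $|\uG(\F_{p^\ell})|$ is strictly increasing in $\ell$, proving the first assertion.

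For the ``moreover'' part I apply the same mechanism to $\uG=\uG_{ad}$. A finite simple group $G_\ell$ of type $X$ in characteristic $p$ embeds as a Zariski dense subgroup of $\uG_{ad}(\F_{p^\ell})$ (it arises as the image of $\uG_{sc}(\F_{p^\ell})$ under the central isogeny), so any surjection $T\twoheadrightarrow G_\ell$ lifts to a Zariski dense representation $T\to\uG_{ad}(\overline{\F_p})$ whose image contains $G_\ell$. The same rigidity/pigeonhole argument forces these images to be pairwise $\uG_{ad}(\overline{\F_p})$-conjugate for infinitely many $\ell$, which bounds $|G_\ell|$ and hence bounds $\ell$ to finitely many values.

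The crux is the rigidity dimension count: making precise the inequality $\dim V\le\delta_a^{\uG}+\delta_b^{\uG}+\delta_c^{\uG}-\dim\uG$ and verifying that the Zariski dense locus of $V$ consists of full-dimensional $\uG$-orbits. The hypothesis $p\nmid abcd$ is what guarantees separability of the equations $x^a=y^b=z^c=1$, the correct dimension and structure of the semisimple conjugacy classes $C_a,C_b,C_c$, and the correct size of $Z(\uG)$, so that the characteristic-zero dimension computation transfers to characteristic $p$ without loss.
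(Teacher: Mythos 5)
Your strategy is in the same spirit as the paper's---both boil down to showing that under rigidity there are only finitely many conjugacy classes of Zariski dense homomorphisms $T \to \uG(\overline{\F}_p)$---but the crux of your argument contains a genuine gap. The inequality
\[
\dim V\le\dim C_a+\dim C_b+\dim C_c-\dim\uG
\]
does \emph{not} follow from an ``expected codimension'' count. The general fiber-dimension theorem gives the inequality in the \emph{opposite} direction: if $V=\phi^{-1}(1)$ is nonempty for $\phi\colon C_a\times C_b\times C_c\to\uG$, $(x,y,z)\mapsto xyz$, then $\dim V\geq\dim C_a+\dim C_b+\dim C_c-\dim\uG$. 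The upper bound you need is exactly the nontrivial content of Theorem~\ref{t:marconj}, and it has to be established; without it, the pigeonhole on orbits does not get off the ground. The paper obtains this bound from Weil's tangent-space calculation: the tangent space to $\Hom(T,\uG)$ at $\rho$ sits inside $Z^1(T,\Ad\circ\rho)$, and formula~(\ref{e:dimpone}) together with the vanishing of invariants gives $\dim P^1 = \dim H^1 = 0$ when $(a,b,c)$ is rigid (Proposition~\ref{p:locrig}), so each such $\rho$ is locally rigid and its orbit is open. Finitely many disjoint open orbits can exist in a variety, hence finitely many surjections.

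There is a related point you elide: the hypothesis $p\nmid d$ is used in the paper not merely to control torsion in a maximal torus but, crucially, to guarantee (via Hiss and Steinberg) that $\Ad\circ\rho$ and its dual have \emph{no invariants} as $\uG(\F_{p^\ell})$-modules, which is precisely the hypothesis $i=i^*=0$ needed to make Weil's formula produce the bound $\dim H^1 \leq \delta_a+\delta_b+\delta_c - 2\dim\uG = 0$. Your proof never connects $p\nmid d$ to the cohomological vanishing that drives the dimension estimate. The contradiction at the end (conjugate triples generating isomorphic groups, against $|\uG(\F_{p^\ell})|$ strictly increasing) is fine, and the ``moreover'' part is structurally reasonable, but both inherit the same unproven dimension bound. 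In short: you have the right picture, but ``expected codimension'' must be replaced by the actual cocycle computation, which is where the theorem lives.
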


So Marion's conjecture is  true except possibly if $p$ divides $abcd$.  For a given $T=T_{a,b,c}$ and a given $X$, this excludes only finitely many primes.

Theorem \ref{t:marconj} is proved by showing that under its hypotheses, the epimorphisms from $T$ to $\uG(\mathbb{F}_{p^\ell})$ are all locally rigid (when considered as elements in $\Hom(T,\uG(\overline{\mathbb{F}}_p))$). Hence there are only finitely many.

%Under the additional hypothesis $p\nmid abcd$ (where $d$ is the determinant of the Cartan matrix of the Lie algebra of $\uG$), we prove that rigidity is equivalent to the condition that $\Hom(\Gamma,{\uG})/\underline{G}$ is zero-dimensional, and we %deduce Marion's conjecture, again assuming $p\nmid abc$.

We should mention that Marion classified the rigid pairs $((a,b,c),\underline{G})$ and proved this conjecture for many of them by a case by case study together with the notion of linear rigidity defined in \cite{SV}.  Our approach gives a conceptual explanation and dispenses with the assumption that $a$, $b$ and $c$ are prime numbers.

\begin{ack}
The authors are grateful to the ERC, ISF and NSF for their support  and thank Bob Guralnick and Aner Shalev for some useful and helpful discussions.
\end{ack}

 \section{Deformation theory of Fuchsian groups}\label{s:dtfg}

 In his well-known paper Weil \cite{Weil} introduced the language of cohomology into deformation theory.  He proved:

 \begin{thm}\label{t:locrig}
 Let $\Gamma$ be a finitely generated group and $\uG$ be an algebraic group defined over a field $\mathbb{F}$  and with Lie algebra $\mathfrak{g}$. Let $\rho \in \Hom(\Gamma,\uG)$ be a representation and ${\rm Ad}\circ \rho$ be the representation induced on $\mathfrak{g}$. Suppose $H^1(\Gamma, {\rm Ad}\circ \rho)=0$. Then
 $\rho$ is locally rigid, i.e. there is a neighborhood of $\rho$ consisting entirely of conjugates of $\rho$ by elements of $\underline{G}$.
 More precisely, there exists a universal domain $K\supset F$ such that every $K$-point in some Zariski neighborhood of $\rho$ in $\Hom(\Gamma,\uG)$ lies in the $\uG(K)$-orbit of $\rho$.
 \end{thm}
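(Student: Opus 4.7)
The plan is to identify the cohomological hypothesis with a tangent-space computation on the representation scheme. Fix generators $\gamma_1,\ldots,\gamma_n$ of $\Gamma$; then $\Hom(\Gamma,\uG)$ is the closed subscheme of $\uG^n$ cut out by the (possibly infinitely many) relators of $\Gamma$. Working with $\mathbb{F}[\epsilon]/(\epsilon^2)$-valued points, a tangent vector at $\rho$ is a map $\rho_\epsilon(\gamma_i)=\bigl(1+\epsilon\, u(\gamma_i)\bigr)\rho(\gamma_i)$ with $u(\gamma_i)\in\frakg$. A direct calculation shows that $\rho_\epsilon$ extends to a homomorphism of $\Gamma$ if and only if $u$ extends to a map $u\colon\Gamma\to\frakg$ satisfying the cocycle identity
\[
u(\gamma\delta)=u(\gamma)+\Ad(\rho(\gamma))\,u(\delta).
\]
Hence $T_\rho\Hom(\Gamma,\uG)=Z^1(\Gamma,\Ad\circ\rho)$.

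Next I would analyze the conjugation orbit $\uG\cdot\rho\subset\Hom(\Gamma,\uG)$, which is the image of the orbit map $g\mapsto g\rho g^{-1}$. Differentiating at the identity sends $X\in\frakg$ to the cocycle $\gamma\mapsto X-\Ad(\rho(\gamma))X$, and the image of this derivative is precisely $B^1(\Gamma,\Ad\circ\rho)$. Thus the tangent space to the orbit at $\rho$ equals $B^1$, and the hypothesis $H^1(\Gamma,\Ad\circ\rho)=Z^1/B^1=0$ yields
\[
T_\rho(\uG\cdot\rho)=T_\rho\Hom(\Gamma,\uG).
\]

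The final step is to pass from equality of tangent spaces to the set-theoretic conclusion. The orbit $\uG\cdot\rho\cong\uG/\mathrm{Stab}(\rho)$ is a smooth locally closed subscheme of $\Hom(\Gamma,\uG)$ whose tangent space at $\rho$ fills out the whole tangent space of the ambient representation variety. A standard smoothness/dimension argument then forces $\Hom(\Gamma,\uG)$ to be smooth at $\rho$ of the same dimension as the orbit, so the closure of $\uG\cdot\rho$ coincides with the union of those irreducible components of $\Hom(\Gamma,\uG)$ passing through $\rho$. Passing to a universal domain $K\supset\mathbb{F}$, every $K$-point in a Zariski neighborhood of $\rho$ then lies in $\uG(K)\cdot\rho$, which is exactly the statement of local rigidity.

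The main obstacle I expect is this last passage from infinitesimal information to a genuine neighborhood result: an equality of Zariski tangent spaces at a single point does not, by itself, imply a local scheme-theoretic coincidence, especially in positive characteristic. The way around this is to exploit the homogeneity of the orbit, which is automatically smooth as $\uG/\mathrm{Stab}(\rho)$, so that any ambient reduced subscheme sharing its tangent space at $\rho$ must locally coincide with it. A minor secondary point is that $\Gamma$ is only assumed finitely generated, not finitely presented, but this is harmless: $\Hom(\Gamma,\uG)$ remains a (possibly non-Noetherian) closed subscheme of $\uG^n$, and the cocycle description of its tangent space at $\rho$ is unaffected.
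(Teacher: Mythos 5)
The paper does not give its own proof of this theorem: it is Weil's local rigidity theorem, cited directly from \cite{Weil}. Your argument is essentially Weil's original deformation-theoretic proof, and the logical structure is correct: the identification $T_\rho\Hom(\Gamma,\uG)=Z^1(\Gamma,\Ad\circ\rho)$ via dual-number points, the identification of the tangent space to the orbit with $B^1$, and the passage from $Z^1=B^1$ to smoothness of $\Hom(\Gamma,\uG)$ at $\rho$ via the dimension squeeze
\[
\dim (\uG\cdot\rho)\;\le\;\dim_\rho\Hom(\Gamma,\uG)\;\le\;\dim T_\rho\Hom(\Gamma,\uG)\;=\;\dim B^1\;=\;\dim(\uG\cdot\rho),
\]
which forces the orbit to be open in the unique component of $\Hom(\Gamma,\uG)$ through $\rho$.

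Two small points of precision. First, $\Hom(\Gamma,\uG)$ is automatically Noetherian, not ``possibly non-Noetherian'': it is a closed subscheme of $\uG^n$, which is Noetherian since $\uG$ is of finite type over a field, so the infinitely many equations coming from relators of a merely finitely generated $\Gamma$ still carve out a finite-type closed subscheme. Second, the phrase ``any ambient reduced subscheme sharing its tangent space at $\rho$ must locally coincide with it'' is true for a smooth locally closed $Y\subset X$ but should be supported by the dimension argument you allude to: the equality $\dim T_\rho Y=\dim T_\rho X$ combined with $\dim Y\le\dim_\rho X\le\dim T_\rho X$ forces $\mathcal{O}_{X,\rho}$ to be a regular local ring, hence a domain, so $X$ has a unique (reduced, irreducible) branch at $\rho$ of the same dimension as $Y$, whence $Y$ is open in $X$ near $\rho$. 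With those minor caveats, the proposal is a correct proof.
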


Weil then showed how to compute the space  of cocycles for a general finite dimensional representation of a general Fuchsian group.  For conciseness, we will present here the computation only for hyperbolic triangle groups $T=T_{a,b,c}$ (see \cite{Weil} and \cite{LL} for the general case).

Let $\Gamma$ act via a representation $s$ on a vector space $V$. A 1-cocycle is a map $\phi: \Gamma \rightarrow V$ with
$$ \phi(\gamma\gamma')=\phi(\gamma)+s(\gamma)\phi(\gamma')$$
for all $\gamma, \gamma' \in \Gamma$.  It is a 1-coboundary if there exists $w \in V$ such that $$ \phi(\gamma)=w-s(\gamma)w$$ for every $\gamma \in \Gamma$. Let $Z^1(\Gamma,V)$ (respectively, $B^1(\Gamma,V)$) be the space of 1-cocycles (respectively, 1-coboundaries) and let $H^1(\Gamma,V)=Z^1(\Gamma,V)/B^1(\Gamma,V)$.  For $\Gamma=T=T_{a,b,c}$ we say that $\phi$ is \textit{parabolic}
if for every finite subgroup $S$ of $T$, $\phi|_S$ is a 1-coboundary. It is well-known that the maximal finite subgroups of $T$  are the conjugates of $\langle x \rangle$, $\langle y \rangle$ and $\langle z \rangle$ of (\ref{e:tg}).
It follows that if $p={\rm char}(\mathbb{F})$ does not divide $abc$ then every 1-cocycle is parabolic. Let
$\tilde{P}^1(T,V)$ be the space of parabolic 1-cocycles and $P^1(T,V)=\tilde{P}^1(T,V)/B^1(T,V)$.  Hence if $p$ does not divide $abc$ then $\tilde{P}^1(T,V)=Z^1(T,V)$ and $P^1(T,V)=H^1(T,V)$.

In \cite[\S 6, pp. 155--156]{Weil}, Weil computed  $\dim \tilde{P}^1(T,V)$ and  $\dim P^1(T,V)$ for a general representation $s$ of $T=T_{a,b,c}$ (in fact for every Fuchsian group but we specialize his equations to our case):

\begin{equation}\label{e:dimptildeone}
\dim \tilde{P}^1(T,V)=-d+i^*+e_x+e_y+e_z
\end{equation}
and

\begin{equation}\label{e:dimpone}
\dim P^1(T,V)=-2d+i+i^*+e_x+e_y+e_z
\end{equation}
where
$d=\dim V$, $i$ is the dimension of the space of invariants of $s$, $i^*$ is the dimension of the space of invariants of $s^*$ (the dual of $s$), and for $t\in\{x,y,z\}$,
$e_t={\rm rank}(I_d-s(t))$.

The following consequence will be used several times:

\begin{lem}\label{l:h1equal}
Let  $s_1,s_2: T=T_{a,b,c}\rightarrow {\rm GL}(V)$ be two representations lying in a common irreducible component of $\Hom(T,{\rm GL}(V))$ where $V$ is a vector space over a field of characteristic zero. If the dimensions of the spaces of invariants of $s_1$, $s_1^*$, $s_2$, $s_2^*$ are all  zero, then
$$ \dim H^1(T, s_1)=\dim H^1(T, s_2).$$
\end{lem}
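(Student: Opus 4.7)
The plan is to apply Weil's formula \eqref{e:dimpone} to both representations and show that each summand on the right-hand side agrees for $s_1$ and $s_2$.

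First, since $V$ is defined over a field of characteristic zero, the characteristic does not divide $abc$, so as noted between \eqref{e:dimptildeone} and \eqref{e:dimpone} every $1$-cocycle is parabolic. Hence $H^1(T, s_j) = P^1(T, s_j)$ for $j \in \{1,2\}$, and formula \eqref{e:dimpone} gives
$$\dim H^1(T, s_j) = -2d + i(s_j) + i^*(s_j) + e_x(s_j) + e_y(s_j) + e_z(s_j).$$
The hypothesis that the invariants of $s_j$ and $s_j^*$ are zero kills the terms $i(s_j)$ and $i^*(s_j)$, and $d = \dim V$ is a constant. So it suffices to show $e_t(s_1) = e_t(s_2)$ for each $t \in \{x,y,z\}$.

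Second, writing $n_t \in \{a,b,c\}$ for the order constraint on $t$, each $s_j(t)$ satisfies $s_j(t)^{n_t}=1$ and is therefore semisimple with eigenvalues among the $n_t$-th roots of unity. The key observation is that $e_t = d - m_1(s(t))$, where $m_1$ denotes the multiplicity of $1$ as an eigenvalue. More generally, for every $n_t$-th root of unity $\zeta$, the multiplicity is given by the character formula
$$m_\zeta(s(t)) = \frac{1}{n_t}\sum_{k=0}^{n_t-1} \zeta^{-k}\,\mathrm{tr}\bigl(s(t)^k\bigr),$$
which is a regular function of $s$ on the representation variety $\Hom(T, \mathrm{GL}(V))$.

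Finally, I would show that each such $m_\zeta$ is constant on an irreducible component. The function takes only integer values, so its image under the morphism to $\mathbb{A}^1$ is a constructible subset of $\mathbb{A}^1$ contained in $\mathbb{Z}$; such a set must be finite. On an irreducible variety, a regular function with finite image is constant, since the preimages of distinct values would give a decomposition into finitely many disjoint closed subsets. Applying this to the common irreducible component containing $s_1$ and $s_2$ shows that each $m_\zeta(s(t))$, and hence $e_t$, agrees at $s_1$ and $s_2$, yielding the claimed equality of $H^1$ dimensions. The only substantive step is this last constancy argument; everything else is a direct bookkeeping with Weil's formula.
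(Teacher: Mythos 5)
Your proof is correct and follows essentially the same route as the paper's. The paper applies Weil's formula \eqref{e:dimpone} exactly as you do, and then concludes by asserting the fact that ``the restrictions of two representations in a common irreducible component to a cyclic subgroup are conjugate.'' What you do is supply the justification for that assertion: since each $s(t)$ is a finite-order (hence, in characteristic zero, semisimple) element, its conjugacy class is determined by the eigenvalue multiplicities $m_\zeta(s(t))$, and each such multiplicity is a regular, integer-valued function on $\Hom(T,{\rm GL}(V))$ via the averaging formula you wrote down, hence constant on any irreducible component. This is a clean and complete filling-in of the step the paper leaves implicit; there is no gap. The only minor point worth making explicit, which you are implicitly using, is that $e_t = d - m_1(s(t))$ relies on $s(t)$ being diagonalizable, which holds because $s(t)^{n_t}=1$ and the characteristic is zero.
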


\begin{proof}
For $j \in \{1,2\}$, let $V^{s_j(x)}$ (respectively, $V^{s_j(y)}$ and $V^{s_j(z)}$) be the fixed point space of $s_j(x)$ (respectively, $s_j(y)$ and $s_j(z)$) in  $V$.  Since $V$ is defined over a field of characteristic zero and the dimensions of the spaces of invariants of $s_j$ and $s_j^*$ are zero, (\ref{e:dimpone}) yields
$$ \dim H^1(T,s_j)=\dim V- (\dim V^{s_j(x)}+\dim V^{s_j(y)}+\dim V^{ s_j(z)}).$$
Since the restrictions of two representations in a common irreducible component to a cyclic subgroup are conjugate, we get
 $\dim V^{s_1(x)}=\dim V^{s_2(x)}$ (and similarly for $y$ and $z$); this yields the result.
\end{proof}

Let $\uH$ be a real form of ${\rm PGL}_2$ and let  $\rho: T =T_{a,b,c}\rightarrow \uH(\mathbb{R})$ be an $\uH$-dense representation (in the sense of \cite{LL}) i.e.
\begin{enumerate}[(i)]
\item $\rho(T)$ is Zariski dense in $\uH,$ and
\item  $\rho(x)$ (respectively, $\rho(y)$, $\rho(z)$) has order exactly $a$ (respectively, $b$, $c$).
\end{enumerate}
We note that every $T$ has such a representation to the split form ${\rm PGL}_2(\mathbb{R})$. In \cite{LL} it was shown that every $T$ not in \begin{equation}\label{e:trianglenotso3dense}
S=\{T_{2,4,6}, T_{2,6,6},T_{2,6,10},T_{3,4,4},T_{3,6,6},T_{4,6,12}\}\end{equation}
has  such a representation to ${\rm SO}(3,\R)$.

Let   $\rho: T=T_{a,b,c}\rightarrow \uH$ be such a representation and let $s={\rm Ad} \circ \rho$ be  the action on the Lie algebra of $\uH(\mathbb{C})$. Then the eigenvalues of $s(x)$ (respectively, $s(y)$, $s(z)$) are $1$, $\omega$, $\omega^{-1}$ where $\omega$ is a primitive $a$-th root (respectively, $b$-th, $c$-th root) of unity. Thus $e_x=e_y=e_z=2$. Recall that in general the adjoint representation of a simple group, in characteristic zero, is self-dual. Also as $T$ is Zariski dense in $\uH$, there are no invariants. Hence $i=i^*=0$. Since here $d=3$, (\ref{e:dimpone})
gives $\dim P^1(T, {\rm Ad}\circ \rho)=0$ and hence  $\dim H^1(T, {\rm Ad}\circ \rho)=0$. In particular, by Theorem \ref{t:locrig}, $T$ is locally rigid in ${\rm PGL}_2(\mathbb{C})$ and in ${\rm SO}(3)$ (when $\rho$ is an ${\rm SO}(3)$-dense representation).

Let us now move to  more general representations of $T=T_{a,b,c}$ into an algebraic group $\uG$. It is a well-known theorem of de Siebenthal \cite{dS} and Dynkin \cite{D1} that for every (adjoint) simple algebraic group $\uG$ defined over $\mathbb{C}$ there exists a conjugacy class of {\textit{principal}} homomorphisms ${\rm SL}_2 \rightarrow \uG$ such that the image of any nontrivial unipotent element of ${\rm SL}_2(\mathbb{C})$ is a regular unipotent element of $\uG(\mathbb{C})$. The restriction of the adjoint representation of $\uG$ to ${\rm SL}_2$ via the principal homomorphism is  a direct sum of $V_{2e_j}$ ($1\leq j\leq r$), where $r$ is the Lie rank of $\uG$, $e_1,\dots,e_r$ is the sequence of exponents of $\uG$ and $V_k$ denotes the $k$-th symmetric power of the two-dimensional irreducible representation of ${\rm SL}_2$ (see \cite{Ko}). This is a $k+1$-dimensional representation and hence
\begin{equation}\label{e:dimge}
\dim \uG=\sum_{j=1}^r(1+2e_j).
\end{equation}
Note that the homomorphism ${\rm SL}_2 \rightarrow {\rm Ad}(\uG)$ factors through ${\rm PGL}_2$.
The principal homomorphism induces, by restriction, a homomorphism from ${\rm SU}(2)$ to $\uG(\mathbb{C})^c$, where $\uG(\mathbb{C})^c$ denotes a maximal compact subgroup of $\uG(\mathbb{C})$. The group $\uG(\C)^c$ is actually isomorphic to $\uG^c(\mathbb{R})$ where $\uG^c$ is a compact real form of $\uG$. When $\uG$ acts on its Lie algebra, the latter homomorphism factors through ${\rm SO}(3)$, and we also call the resulting homomorphism ${\rm SO}(3) \rightarrow \uG(\mathbb{C})^c$, the principal homomorphism.

Let now $\rho_0: T \rightarrow {\rm PGL}_2 \rightarrow \uG$ be the representation induced from the principal homomorphism ${\rm PGL}_2\rightarrow \uG$ and consider the representation ${\rm Ad} \circ \rho_0$ of $T$ on the Lie algebra $\mathfrak{g}$ of $\uG$. The eigenvalues of ${\rm Ad}\circ \rho_0(x)$ are:

$$\omega^{-2e_j}, \omega^{2-2e_j},\dots,\omega^0,\dots,\omega^{2e_j-2}, \omega^{2e_j}$$ for $ j=1,\dots,r$ and where $\omega$ is a primitive root of unity of degree $2a$ (and similarly for ${\rm Ad}\circ\rho_0(y)$ and ${\rm Ad}\circ\rho_0(z)$ with $2b$ and $2c$, respectively).

One checks that
\begin{equation}\label{e:valex}
e_x=\dim \uG-\sum_{j=1}^r\left(1+2\left\lfloor \frac{e_j}a\right \rfloor\right),
\end{equation}
 and similarly for $e_y$ and $e_z$. As ${\rm PGL}_2$ has no invariants on $\mathfrak{g}$ (all the $V_{2e_i}$ are nontrivial), we deduce  from (\ref{e:dimpone}) and (\ref{e:valex}) the following result.

In the statement below $\uG$ denotes an absolutely simple algebraic group  of adjoint type defined over $\mathbb{R}$ of type $X$ and rank $r$, and  $\uH$  an absolutely simple form of ${\rm PGL}_2$ defined over $\mathbb{R}$. The result will be used in this paper for $\uH={\rm SO}(3)$ and $\uG$ a compact real form, whereas in \cite{LLM2} we will need it for the split case.

\begin{prop}\label{p:dimpone}
Let $T=T_{a.b.c}$ be a hyperbolic triangle group with an $\uH$-dense representation $T\rightarrow \uH$, where $\uH$ is a real form of ${\rm PGL}_2$. Let  $${\rm Ad}\circ\rho_0: T\rightarrow \uH \rightarrow \uG \rightarrow {\rm Aut}(\mathfrak{g})$$ where $\rho_0: T \rightarrow \uG$ is the representation induced from the principal homomorphism $\uH \rightarrow \uG$, and write $n_1=a$, $n_2=b$, $n_3=c$. Then
$$\dim H^1(T,{\rm Ad}\circ\rho_0)= \dim P^1(T,{\rm Ad}\circ\rho_0)=\dim \uG-\sum_{k=1}^3\sum_{j=1}^r\left(1+2\left\lfloor \frac{e_j}{n_k} \right \rfloor\right).$$
\end{prop}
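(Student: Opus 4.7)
The plan is to apply Weil's formula (\ref{e:dimpone}) directly, reading off $i$, $i^*$ and $e_x,e_y,e_z$ from the decomposition of $\mathfrak{g}$ under the principal $\mathrm{PGL}_2$, and then to identify $\dim P^1$ with $\dim H^1$ via a characteristic-zero parabolicity argument.

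First I would record that $d=\dim\uG$ and check that both invariant dimensions vanish. Because $\rho(T)$ is Zariski dense in $\uH$, the $T$-invariants of $\mathfrak{g}$ coincide with the $\uH$-invariants; but under the principal embedding $\mathfrak{g}=\bigoplus_{j=1}^r V_{2e_j}$ with all exponents $e_j\geq 1$, so no summand is trivial and $i=0$. Since the adjoint representation of a simple algebraic group in characteristic zero is self-dual, $i^*=i=0$.

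Next I would verify (\ref{e:valex}) by a direct eigenvalue count on each summand. On the block $V_{2e_j}$ the eigenvalues of $\mathrm{Ad}\circ\rho_0(x)$ are $\omega^{2k-2e_j}$ for $0\leq k\leq 2e_j$, with $\omega$ a primitive $2a$-th root of unity; such an eigenvalue equals $1$ exactly when $a\mid(k-e_j)$, and counting $k\in\{0,\dots,2e_j\}$ in this residue class gives $1+2\lfloor e_j/a\rfloor$. Summing over $j$ and subtracting from $\dim\uG$ yields $e_x$ as in (\ref{e:valex}), and analogously for $e_y,e_z$ with $a$ replaced by $b,c$. Substituting $d=\dim\uG$, $i=i^*=0$, and these three values of $e_{n_k}$ into (\ref{e:dimpone}) gives
\begin{equation*}
\dim P^1(T,\mathrm{Ad}\circ\rho_0)=-2\dim\uG+\sum_{k=1}^3\left(\dim\uG-\sum_{j=1}^r\left(1+2\left\lfloor\tfrac{e_j}{n_k}\right\rfloor\right)\right),
\end{equation*}
which collapses to the formula in the statement.

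Finally, since $\mathrm{char}(\mathbb{F})=0$ does not divide $abc$, the observation preceding (\ref{e:dimptildeone}) yields $\tilde{P}^1(T,V)=Z^1(T,V)$ and hence $P^1=H^1$. The only step that requires any genuine attention is the lattice-point count $\#\{k\in[0,2e_j]:k\equiv e_j\pmod{a}\}=1+2\lfloor e_j/a\rfloor$ underpinning (\ref{e:valex}); everything else is a formal assembly of ingredients already prepared in the paragraphs preceding the proposition.
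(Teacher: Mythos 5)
Your proof is correct and follows the same route the paper uses: you compute $i=i^*=0$ via density of $\rho(T)$ in $\uH$ plus self-duality of the adjoint, establish (\ref{e:valex}) by the eigenvalue/lattice-point count on each summand $V_{2e_j}$, substitute into (\ref{e:dimpone}), and invoke the characteristic-zero observation to identify $P^1$ with $H^1$. The only difference is cosmetic: the paper states the eigenvalue list and says ``one checks'' (\ref{e:valex}), while you spell out the count $\#\{k\in[0,2e_j]:k\equiv e_j\pmod a\}=1+2\lfloor e_j/a\rfloor$, which is exactly the intended verification.
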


In the next result,  $\uG$ again  denotes an  absolutely simple algebraic group of adjoint type defined over $\mathbb{R}$ of type $X$ and rank $r$.

\begin{lem}\label{l:ineq}
Let  $e_1,\dots, e_r$ be the exponents of $\uG$ and write $n_1=a$, $n_2=b$ and $n_3=c$. Then
\begin{equation}\label{e:ineq} \sum_{k=1}^3\sum_{j=1}^r \left(1+2\left\lfloor \frac{e_j}{n_k} \right\rfloor\right)<\dim \uG \end{equation}
except in the following cases:
\begin{enumerate}[(a)]
\item $X=A_1$.
\item $X=A_2$ and $n_1=2$.
\item $X= A_3$ and $n_1=2$, $n_2=3$.
\item $X=A_4$ and $n_1=2$, $n_2=3$.
\item $X=C_2$ and $n_2=3$.
\item $X=G_2$ and $n_1=2$, $n_3=5$.
\end{enumerate}
In particular, with the notation of  Proposition \ref{p:dimpone}, we have $\dim H^1(T, {\rm Ad}\circ \rho_0 )> 0$ unless $(X,(n_1,n_2,n_3))$ is one of the exceptional cases (a)--(f),  in which case $\dim H^1(T,{\rm Ad}\circ \rho_0)=0$.
\end{lem}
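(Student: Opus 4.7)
My plan is to convert the inequality into a single clean condition, use the hyperbolic bound $\mu<1$ to reduce it to an equality question, and then finish with a short case-by-case check on the Dynkin types.

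First the reformulation. Using $\dim\uG=\sum_{j=1}^r(1+2e_j)$ from (\ref{e:dimge}), one rearranges
$$\dim \uG-\sum_{k=1}^3\sum_{j=1}^r\!\left(1+2\!\left\lfloor\frac{e_j}{n_k}\right\rfloor\right)=2\Bigl[\sum_{j=1}^r D(e_j)-r\Bigr],\qquad D(e):=e-\sum_{k=1}^3\!\left\lfloor\frac{e}{n_k}\right\rfloor.$$
So (\ref{e:ineq}) is equivalent to $\sum_{j=1}^r D(e_j)>r$, and the exceptional cases are exactly those with equality. Since $T$ is hyperbolic, $\mu=1/n_1+1/n_2+1/n_3<1$, and for each integer $e\geq 1$ we have $\sum_k\lfloor e/n_k\rfloor\leq e\mu<e$; as the left side is an integer it is $\leq e-1$, so $D(e)\geq 1$. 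In particular $\sum_j D(e_j)\geq r$, and equality forces $D(e_j)=1$ for every exponent $e_j$.

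Now the characterization of equality. Note $D(e)=1$ means $\sum_k\lfloor e/n_k\rfloor=e-1$, which in turn forces $\mu\geq (e-1)/e$. Combined with the universal bound $\mu\leq 41/42$ in (\ref{e:hypub}), this gives $e\leq 42$, so only finitely many candidates remain. Also $D(1)=1$ automatically. I would then run through each irreducible type $X$ and test each exponent. For $A_1$, the only exponent is $1$ and equality holds trivially, giving case (a). For $A_r$ ($r\geq 2$), applying $D(r)=1$ together with $n_1\leq n_2\leq n_3$ forces $r\leq 4$ and a short ordered search yields $(n_1,n_2)=(2,\cdot)$ for $r=2$ and $(2,3)$ for $r=3,4$, recovering (b), (c), (d) after one checks that $D(e_j)=1$ for all smaller exponents too. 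For $B_r=C_r$, $r=2$, the constraint $D(3)=1$ gives $n_2=3$ with any hyperbolic $n_1\in\{2,3\}$ and $n_3\geq 4$, i.e.\ case (e); higher-rank $B_r,C_r$ die because $D(2r-1)=1$ cannot coexist with hyperbolicity. For $G_2$ the constraint $D(5)=1$ leaves (after discarding the spherical $(2,3,5)$) only the hyperbolic triples with $n_1=2$, $n_3=5$, giving case (f). Finally $D_r$ ($r\geq 4$), $F_4$, $E_6$, $E_7$, $E_8$ all have a pair of exponents whose simultaneous $D=1$ conditions are incompatible with $\mu<1$ under $n_1\leq n_2\leq n_3$, and so contribute no new exceptions.

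The conceptual content is concentrated in the floor bound $D(e)\geq 1$, which is immediate from hyperbolicity; the remaining work is a finite, routine arithmetic check. The only point needing care is to verify for each surviving candidate triple that $D(e_j)=1$ at \emph{every} exponent (not just the largest one), since a single failure would push the candidate out of the exceptional list. The last sentence of the lemma then follows at once from Proposition \ref{p:dimpone}: strict inequality in (\ref{e:ineq}) is $\dim H^1(T,\Ad\circ\rho_0)>0$, and equality is $\dim H^1(T,\Ad\circ\rho_0)=0$.
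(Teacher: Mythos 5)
Your proof is correct, and it takes a genuinely different (and arguably cleaner) route than the paper. The paper's argument writes the left-hand side of (\ref{e:ineq}) as $\mu \dim\uG$ plus a correction term, uses $\mu \le 41/42$ from (\ref{e:hypub}), tabulates the maximum of $\sum_j\bigl((1+2\lfloor e_j/n\rfloor)-(2e_j+1)/n\bigr)$ over each root system and each $n\le 7$, and then reduces to a finite check for low ranks (it handles $A_r$ for $r\le 9$, $B_r,C_r,D_r$ for $r\le 8$, and all exceptional types except $E_7,E_8$ by hand). Your reformulation
$$
\dim\uG-\sum_{k=1}^3\sum_{j=1}^r\Bigl(1+2\Bigl\lfloor \tfrac{e_j}{n_k}\Bigr\rfloor\Bigr)=2\Bigl[\sum_{j=1}^r D(e_j)-r\Bigr],\qquad D(e)=e-\sum_k\Bigl\lfloor \tfrac{e}{n_k}\Bigr\rfloor,
$$
combined with the pointwise bound $D(e)\ge 1$ (an integer below $e\mu<e$) is a sharper structural observation: it proves the non-strict inequality $\le$ in (\ref{e:ineq}) for free (which the paper only notes as a posteriori remark), it reduces the exceptional cases to the transparent condition ``$D(e_j)=1$ for \emph{every} exponent'', and the rank bounds fall out of the single constraint $\mu\ge (e-1)/e$ together with $\mu\le 41/42$, giving $e\le 42$ without tabulation. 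What the paper's approach buys is a uniform bound on the correction term that works across a whole family of root systems at once; what yours buys is that each exponent is treated independently, so a single exponent failing $D=1$ immediately kills the case, making the exceptional list and the $E_7,E_8$ cases essentially trivial. Both are valid; yours is the more conceptual packaging of the same finite arithmetic.
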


\begin{remk}
In all  the exceptional cases we get equality in (\ref{e:ineq})  as would be expected from Proposition \ref{p:dimpone}.
\end{remk}

\begin{proof}
Without loss of generality we may assume (when dealing with the non-exceptional cases) that $n_3 \leq 7$, since the left hand side of (\ref{e:ineq}) is monotonically decreasing in each $n_k$. We note that
$$ (1+2\lfloor {e_j}/{n_k}\rfloor) -\frac{2e_j+1}{n_k} \in [n_k^{-1}-1,1-n_k^{-1}]$$ depends only on $e_j$ modulo $n_k$. On the other hand by (\ref{e:dimge}),
$$ \sum_{j=1}^r \frac{2e_j+1}{n_k}= \frac{\dim \uG}{n_k},$$ so
\begin{eqnarray*}
\sum_{k=1}^3\sum_{j=1}^r (1+2\lfloor e_j/n_k\rfloor) & =  & \left(\frac1{n_1}+\frac1{n_2}+\frac1{n_3}\right)\dim \uG +\sum_{k,j} \left((1+2\lfloor e_j/n_k \rfloor)-\frac{2e_j+1}{n_k} \right)
\end{eqnarray*}
\begin{eqnarray}\label{eqn:ineq}
& \leq & \frac{41}{42}\dim \uG +\sum_{k=1}^3\sum_{j=1}^r \left( (1+2\lfloor e_j/n_k \rfloor)-\frac{2e_j+1}{n_k}\right),
\end{eqnarray}
where in the above inequality we used (\ref{e:hypub}).
The exponents of the different root systems are as follows (see, for example, \cite{Bourbaki}):
\begin{equation*}A_r: 1,2,\dots, r; \ B_r,C_r: 1,3,\dots,2r-1;\ D_r: 1,3,\dots,2r-3, r-1; \ E_6:1,4,5,7,8,11;
\end{equation*}
\begin{equation}\label{e:exp}
E_7: 1,5,7,9,11,13, 17;\ E_8:1,7,11,13,17,19,23,29;\ F_4:1,5,7,11 ;\ G_2: 1,5.
\end{equation}
We tabulate the value (respectively, maximum value) of
$$ \sum_{j=1}^r\left((1+2\lfloor e_j/n \rfloor)-\frac{2e_j+1}n\right)$$
for each root system (respectively, family of root systems) of exceptional (respectively, classical) type and for each $n\leq 7$.

\begin{center}
\begin{tabular}{|l|l|l|l|l|l|l|l|l|}
\hline
$n$ & $A$ & $B/C$ & $D$ & $E_6$ & $E_7$ & $E_8$ & $F_4$ & $G_2$\\
\hline
2 & 0 & -1& -3/2& -1 & -7/2& -4 & -2 & -1\\
3 & 0 & 2/3 & 2/3& -2& -4/3& -8/3& -4/3& -2/3\\
4& 1/4& -1/4& -1/4& 1/2& -1/4& -2 & -1& 1/2\\
5& 2/5 & 4/5 & 4/5& 2/5& 2/5& -8/5& 8/5& 6/5\\
6 & 2/3 & 1/3 & 0& -1& -7/6 & -4/3 & -2/3 & -1/3\\
7 & 6/7 & 8/7 & 8/7 & 6/7 & 0 & 4/7 & 4/7 & 0\\
\hline
\end{tabular}
\end{center}

This table together with (\ref{eqn:ineq}) immediately implies the lemma for $A_r$ when $r \geq 10$, $B_r$, $C_r$,  $D_r$ when $r\geq 9$, $E_7$ and $E_8$. This reduces us to a finite list of cases which can be checked by hand, yielding the exceptions (a)--(f) listed above.
\end{proof}

%It follows that in all the non-exceptional cases of Lemma \ref{l:ineq}, $H^1(T,{\rm Ad}\circ\rho_0)$ in nontrivial.\\

Finally, let us recall the final sentence of Weil in his paper  \cite{Weil} which gives (when specialized to our case of interest):

\begin{thm}\label{t:weil}
Let $\rho:  T=T_{a,b,c} \rightarrow \uG$ where $\uG$ is an algebraic group defined over an algebraically closed field of characteristic $p \geq 0$. Assume
\begin{enumerate}[(a)]
\item $p$ does not divide $abc$.
\item $({\rm Ad}\circ \rho)^*$, the coadjoint representation of $T$,  has no (nontrivial) invariants.
\end{enumerate}
Then $\rho$ has a nonsingular neighborhood in ${\rm Hom}(T,\uG)$ of dimension $-d+e_x+e_y+e_z$.
\end{thm}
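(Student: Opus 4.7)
The plan is to deduce the theorem from a matching pair of computations: an exact formula for the Zariski tangent space $T_\rho \Hom(T,\uG)$ and a lower bound on the local dimension of $\Hom(T,\uG)$ at $\rho$; once they coincide, smoothness follows from the standard criterion that $\dim_\rho X \leq \dim T_\rho X$ for any scheme $X$, with equality iff $X$ is smooth at $\rho$.

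For the upper bound, I would identify $T_\rho \Hom(T,\uG)$ with $Z^1(T,\Ad\circ\rho)$ via the usual infinitesimal dictionary: lifts of $\rho$ to a homomorphism $T \to \uG(\mathbb{F}[\epsilon]/(\epsilon^2))$ correspond to $1$-cocycles with values in $\mathfrak{g}$. Hypothesis (a), $p \nmid abc$, ensures every $1$-cocycle on $T$ is parabolic (as observed in the text preceding (\ref{e:dimptildeone})), so $Z^1(T,\mathfrak{g}) = \tilde P^1(T,\mathfrak{g})$. Applying Weil's formula (\ref{e:dimptildeone}) with $V = \mathfrak{g}$ and $d = \dim \uG$, together with hypothesis (b) which says $i^* = 0$, yields $\dim T_\rho \Hom(T, \uG) = -d + e_x + e_y + e_z$.

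For the lower bound, for each $t \in \{x,y,z\}$ of order dividing $n \in \{a,b,c\}$, I would analyze the closed subvariety $V_n := \{g \in \uG : g^n = 1\}$. Because $p \nmid n$, the element $\rho(t)$ is semisimple; a direct computation shows the differential at $\rho(t)$ of the $n$-th power map $g \mapsto g^n$ is $n$ times the projection onto $\mathfrak{g}^{\Ad(\rho(t))}$, and its kernel has dimension $e_t$. Since semisimple conjugacy classes of $\uG$ of order dividing $n$ are discretely parametrized (only finitely many pass through any neighborhood in the variety of semisimple classes), $V_n$ agrees locally at $\rho(t)$ with the conjugacy class of $\rho(t)$ and is therefore smooth of dimension $e_t$. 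I would then consider the multiplication morphism
$$\nu \colon V_a \times V_b \times V_c \to \uG, \qquad (g_1,g_2,g_3) \mapsto g_1 g_2 g_3,$$
whose fiber over $1$ is exactly $\Hom(T,\uG)$. Applying the fiber dimension theorem to an irreducible component of the smooth source through $(\rho(x),\rho(y),\rho(z))$, every irreducible component of $\nu^{-1}(1)$ containing $\rho$ has dimension at least $e_x + e_y + e_z - d$.

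Combining the two bounds forces equality, so $\rho$ is a nonsingular point of $\Hom(T,\uG)$ of local dimension $-d + e_x + e_y + e_z$; since smoothness is an open condition, this yields the claimed nonsingular neighborhood. The most delicate step is the smoothness of $V_n$ at $\rho(t)$: the local coincidence of $V_n$ with a single conjugacy class depends crucially on $p \nmid n$, which both ensures semisimplicity of $\rho(t)$ and prevents nearby classes of order dividing $n$ from colliding; without it $V_n$ could carry extra components at $\rho(t)$ and the dimension lower bound would collapse.
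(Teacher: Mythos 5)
The paper does not give a proof of Theorem~\ref{t:weil}; it simply cites Weil \cite{Weil} for the statement. Your reconstruction --- computing the Zariski tangent space as $Z^1(T,\mathrm{Ad}\circ\rho)=\tilde{P}^1(T,\mathfrak g)$ (using $p\nmid abc$) and then invoking (\ref{e:dimptildeone}) with $i^*=0$ for the upper bound, while bounding the local dimension from below via the fiber over $1$ of $V_a\times V_b\times V_c\to\uG$ --- is precisely Weil's argument, so the proposal is correct and matches the cited source.
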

We are now ready to put all the above information together and prove our first main result.

\section{Proof of Marion's conjecture}

Let us consider a general representation $\rho: T=T_{a,b,c} \rightarrow \uG$, where $\uG$ is a simple algebraic group defined over an algebraically closed field $\mathbb{F}$ of characteristic $p \geq 0$, and  the action ${\rm Ad}\circ \rho$ on the Lie algebra $\mathfrak{g}$ of $\uG$, where ${\rm Ad}$ denotes the adjoint representation of $\uG$.  In the notation given in (\ref{e:dimpone}), $d= \dim \mathfrak{g}$, $i$ (respectively, $i^*$) is the dimension of the space of invariants of ${\rm Ad}\circ \rho$ (respectively, $({\rm Ad}\circ \rho)^*$) on $\mathfrak{g}$ (respectively, $\mathfrak{g}^*$), and for $t \in \{x,y,z\}$, $e_t={\rm rank}(I_d-{\rm Ad}\circ \rho(t))$. In particular, we have
\begin{equation*}\label{e:lietogroup}
d=\dim \uG \quad \textrm{and} \quad e_t \leq \dim t^{\uG}
\end{equation*}
 with equality in the latter inequality if $p$ does not divide the order of $t$, where, for $t \in \{x,y,z\}$, $t^{\uG}$ denotes the conjugacy class of $\rho(t)$ in $\uG$.
Setting $\delta_a^{\uG(\F)}$ to be the dimension of the subvariety $\uG_{[a]}$ of $\uG$ consisting of elements of order dividing $a$ (and similarly for $\delta_b^{\uG(\F)}$, $\delta_c^{\uG(\F)}$), we therefore  obtain
\begin{equation*}%\label{e:lietogroupfull}
d = \dim \uG, \quad e_x \leq \delta_a^{\uG(\F)}, \quad e_y\leq \delta_b^{\uG(\F)} \quad \textrm{and} \quad e_z \leq \delta_c^{\uG(\F)}.
\end{equation*}
Thus in this case (\ref{e:dimpone}) gives:
\begin{equation*}%\label{e:dimponeexp}
0 \leq \dim P^1(T,V) \leq-2\dim \uG+i+i^*+\delta_a^{\uG(\F)}+\delta_b^{\uG(\F)}+\delta_c^{\uG(\F)}.
\end{equation*}
Hence, if the pair $((a,b,c),\uG)$ is rigid (i.e. $\delta_a^{\uG(\F)}+\delta_b^{\uG(\F)}+\delta_c^{\uG(\F)}=2\dim {\uG}$) we get
\begin{equation*}%\label{e:dimponerigid}
0 \leq \dim P^1(T, {\rm Ad \circ \rho}) \leq i+i^*.
\end{equation*}
In particular if $p \nmid abc$ and $i=i^*=0$ then $\rho$ is locally rigid. In summary we get:

\begin{prop}\label{p:locrig}
Suppose that $(a,b,c)$ is rigid for $\uG$. If $p$ does not divide $abc$ and $\rho: T=T_{a,b,c}\rightarrow \underline{G}$ is such that ${\rm Ad}\circ \rho$ and $({\rm Ad}\circ \rho)^*$ have no invariants, then $H^1(T, {\rm Ad}\circ \rho)=0$, and so $\rho$ is locally rigid.
\end{prop}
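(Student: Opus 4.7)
The plan is to follow almost verbatim the chain of inequalities assembled in the paragraph preceding the proposition, and then invoke Weil's local rigidity theorem (Theorem \ref{t:locrig}). The whole argument is really a bookkeeping exercise with formula (\ref{e:dimpone}) once the hypotheses on $p$, on invariants, and on rigidity are plugged in.

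First I would reduce $H^1$ to $P^1$. Since by assumption $p \nmid abc$, none of $p$ divides the orders of the generators $x,y,z$ of $T$, and the maximal finite subgroups of $T$ are conjugate to $\langle x\rangle, \langle y\rangle, \langle z\rangle$; this was noted in Section 2 and immediately gives $\tilde P^1(T,\mathfrak{g}) = Z^1(T,\mathfrak{g})$ and hence $P^1(T,\mathfrak{g}) = H^1(T,\mathfrak{g})$ for $\mathfrak{g}$ equipped with the action $\mathrm{Ad}\circ \rho$. Next I would apply Weil's formula (\ref{e:dimpone}) with $V = \mathfrak{g}$ and $s = \mathrm{Ad}\circ \rho$, yielding
\[
\dim H^1(T, \mathrm{Ad}\circ\rho) = -2\dim\uG + i + i^* + e_x + e_y + e_z.
\]

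Then I would control the $e_t$ by $\delta_\bullet^{\uG}$. Because $p$ does not divide the order of $\rho(t)$ for $t\in\{x,y,z\}$, the semisimple element $\mathrm{Ad}\circ\rho(t)$ is diagonalizable, and so $e_t = \mathrm{rank}(I_d - \mathrm{Ad}\circ\rho(t)) = \dim t^{\uG}$. Since the $\uG$-conjugacy class of $\rho(t)$ is contained in $\uG_{[\mathrm{ord}(t)]} \subseteq \uG_{[n]}$ for $n$ the corresponding one of $a,b,c$, we obtain $e_x \leq \delta_a^{\uG}$, $e_y\leq \delta_b^{\uG}$, $e_z \leq \delta_c^{\uG}$. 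Plugging into the previous display and using rigidity $\delta_a^{\uG}+\delta_b^{\uG}+\delta_c^{\uG}=2\dim\uG$ together with $i=i^*=0$ gives
\[
\dim H^1(T,\mathrm{Ad}\circ\rho) \leq -2\dim\uG + 0 + 0 + 2\dim\uG = 0,
\]
so $H^1(T,\mathrm{Ad}\circ\rho) = 0$. Local rigidity of $\rho$ then follows from Theorem \ref{t:locrig}.

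There is no real obstacle here, since all the pieces have been set up earlier in the excerpt; the only point requiring a line of justification is the equality $e_t = \dim t^{\uG}$, which uses the coprimality of $p$ with the order of $\rho(t)$ to ensure that $\mathrm{Ad}\circ\rho(t)$ is semisimple and that its fixed subspace on $\mathfrak{g}$ has dimension equal to $\dim C_{\uG}(\rho(t)) = \dim \uG - \dim t^{\uG}$.
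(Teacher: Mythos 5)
Your proof is correct and follows essentially the same chain of reasoning the paper sets up in the paragraph preceding the proposition: reduce $H^1$ to $P^1$ via $p\nmid abc$, apply Weil's formula \eqref{e:dimpone}, bound $e_x+e_y+e_z$ by $\delta_a^{\uG}+\delta_b^{\uG}+\delta_c^{\uG}$, use rigidity and vanishing of invariants to get $\dim H^1=0$, and invoke Theorem \ref{t:locrig}. The one small remark is that your argument only needs the inequality $e_t\leq\dim t^{\uG}$ (which holds in general, since $\dim\mathfrak{g}^{\rho(t)}\geq\dim C_{\uG}(\rho(t))$), not the equality you justify via semisimplicity; the paper records the same inequality and notes that equality holds under the coprimality hypothesis but likewise does not rely on it.
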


We are now  ready to  prove our version of Marion's conjecture (see Theorem \ref{t:marconj}).

\textit{Proof of Theorem \ref{t:marconj}.}
As $p\nmid d$, there is no trivial factor in the Jordan-H\"older series of $\Ad\circ \rho$ as a $\uG$-representation
(see \cite{Hiss}),
so, by \cite[\S13]{Steinberg}, the same holds at the level of
$\uG(\F_{p^\ell})$-representations when $\ell$ is sufficiently large. (In fact $\ell=1$ suffices, but the argument is more subtle and uses the fact that the adjoint representation is $p$-restricted under the hypothesis.) Thus, if $\rho:T\rightarrow \uG(\F_{p^\ell})$ is an epimorphism, $\Ad\circ \rho$ satisfies the hypothesis of Proposition \ref{p:locrig} and $\rho$ is locally rigid as an element of $\Hom(T,\uG)$. In other words a small open neighborhood of $\rho$ lies inside the orbit of $\rho$ under conjugation. This implies that the orbit of $\rho$ is open. A variety  can have at most finitely many such disjoint open sets, so this proves the first part of the theorem.

For the second one, if $G$ is a sufficiently large finite simple group of type $X$ in characteristic $p$, it can be regarded either as the derived group of $\uG(\F_{p^\ell})$ for a simple adjoint group $\uG$ or as the quotient of $\uG^{\mathrm{sc}}(\F_{q^\ell})$ by its center.  Every irreducible representation of $\uG$ can be regarded as an irreducible representation of $\uG^{\mathrm{sc}}$ and therefore, when $\ell$ is sufficiently large, as an irreducible representation of $\uG^{\mathrm{sc}}(\F_{q^\ell})$ which factors through $G$.  Thus if $\ell$ is sufficiently large, $G$ cannot be an $(a,b,c)$-group.
 \quad $\square$\\

Marion \cite{Marionconj} discussed the notions of reducible, rigid and nonrigid hyperbolic triples only over fields of positive characteristic.  
In order to treat characteristic zero and positive characteristic uniformly (without limiting ourselves to the split adjoint case) it is useful to use the language of schemes.  Let $\CG$ denote an affine group scheme of finite type over $\Z$
whose generic fiber $\CG_{\Q}$ is quasisimple.
By \cite[IV~11.1.1]{EGA}, there exists a finite subset $S\subset \Spec\Z$ such that $\CG$ is
flat over the complement of $S$.  By \cite[XIX~2.5]{SGA}, $\CG$ is a semisimple group scheme over
the complement of a (possibly larger) finite subset of $\Spec\Z$; 
in particular, the fibers are all semisimple algebraic groups.
By \cite[XXII~2.8]{SGA}, for every sufficiently large prime $p$, all the fibers $\CG_{\F_p}$ have the same root datum  (in particular, the same Dynkin diagram).
By \cite[IV~9.5.5]{EGA}, 
for every sufficiently large prime $p$ and every algebraically closed field $\mathbb{F}$  of  characteristic $p$,
we have $\delta_a^{\CG_{\C}}=\delta_a^{\CG_{\mathbb{F}}}$.

\begin{prop}
Let $\CG$ be an affine group scheme of finite type over $\Z$ whose generic fiber is a quasisimple algebraic group.  For $m \in \mathbb{N}$ let  $\delta_m^{\CG_{\C}}$ be the dimension of the variety of all elements of $\CG_\mathbb{C}$ of order dividing $m$. Let $T=T_{a,b,c}$ be a hyperbolic triangle group. Then the following assertions hold:
\begin{enumerate}[(i)]
\item If $\delta_a^{\CG_\C}+\delta_b^{\CG_\C} + \delta_c^{\CG_\C} < 2 \dim \CG_{\C}$ then there is no Zariski dense representation from $T$ to $\CG_\C$.
\item If $\delta_a^{\CG_\C}+\delta_b^{\CG_\C}+\delta_c^{\CG_\C}=2 \dim \CG_{\C}$ then there are only finitely many conjugacy classes of representations of $T$ onto Zariski dense subgroups of $\CG_\C$,
and for almost all primes $p$, $\CG(\F_{p^\ell})$ is a quotient of $T$ for only finitely many $\ell$.  
\end{enumerate}
\end{prop}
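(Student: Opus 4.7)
The plan is to combine Weil's dimension formula for $\Hom(T,\CG)$ with an orbit-dimension count, and then reduce the finite-field statement to Theorem \ref{t:marconj}.

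First, for (i), I would argue by contradiction. Suppose $\rho\colon T\to \CG_\C$ has Zariski dense image. Since $\CG_\Q$ is quasisimple, the center of $\mathfrak{g}_\C$ vanishes, so $\Ad\circ\rho$ has no $T$-invariants; the Killing form identifies $\Ad$ with its dual in characteristic zero, so the coadjoint representation has no invariants either. Theorem \ref{t:weil} (applied with $p=0$) then says that $\Hom(T,\CG_\C)$ is nonsingular at $\rho$ of local dimension $-\dim \CG_\C+e_x+e_y+e_z$, while $e_x\leq \delta_a^{\CG_\C}$, $e_y\leq \delta_b^{\CG_\C}$, $e_z\leq \delta_c^{\CG_\C}$. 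Zariski density forces the centralizer of $\rho(T)$ in $\CG_\C$ to coincide with the (finite) center of $\CG_\C$, so the $\CG_\C$-conjugation orbit of $\rho$ has dimension exactly $\dim \CG_\C$. Since this orbit is contained in the nonsingular neighborhood, one obtains
$$\dim \CG_\C \;\leq\; -\dim \CG_\C +\delta_a^{\CG_\C}+\delta_b^{\CG_\C}+\delta_c^{\CG_\C},$$
contradicting the hypothesis of (i).

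For the first assertion of (ii), the same computation, now with equality in the rigid case, shows that the conjugation orbit of every Zariski dense $\rho$ is open in $\Hom(T,\CG_\C)$. The variety $\Hom(T,\CG_\C)$ embeds in $\CG_\C^{\,3}$ as the closed subvariety cut out by the triangle relations, so it is of finite type and has only finitely many irreducible components. An irreducible space cannot contain two disjoint nonempty open subsets, so each component contains at most one conjugacy class of Zariski dense representations, yielding finitely many in total.

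For the second assertion of (ii), I would reduce to Theorem \ref{t:marconj}. The scheme-theoretic discussion preceding the proposition supplies, for all but finitely many primes $p$, that $\CG_{\F_p}$ is semisimple with the same root datum as $\CG_\C$ (hence the same dimension and the same Cartan determinant $d$) and that $\delta_m^{\CG_{\overline{\F}_p}}=\delta_m^{\CG_\C}$ for each $m$. Rigidity therefore transfers from characteristic zero to every sufficiently large characteristic $p$. Excluding in addition the finitely many primes dividing $abcd$, Theorem \ref{t:marconj} applies and shows that $\CG(\F_{p^\ell})$ is a quotient of $T$ for only finitely many $\ell$.

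The main obstacle is really one of careful bookkeeping rather than of serious content: one must justify that the centralizer of a Zariski dense image in a quasisimple group is indeed finite (so the conjugation orbit attains full dimension $\dim \CG_\C$), and one must keep track of which finitely many primes have to be excluded to guarantee that dimensions, Cartan determinants, and $\delta_m$'s are stable under reduction mod $p$ and that the hypotheses of Theorem \ref{t:marconj} apply.
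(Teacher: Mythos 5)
Your proof is correct, and for the most part it follows the same logic as the paper; the one genuine divergence is in part (i). The paper simply invokes Scott's formula (following Marion's argument in \cite[\S2]{Marionconj}), which for a Zariski dense $\rho$ gives $e_x+e_y+e_z\geq 2\dim\CG_\C$ directly, since the adjoint module has no invariants or coinvariants. You instead recover the same inequality by combining Theorem~\ref{t:weil} (which gives a nonsingular point of local dimension $-\dim\CG_\C+e_x+e_y+e_z$) with the observation that the conjugation orbit of a Zariski dense $\rho$ has dimension exactly $\dim\CG_\C$ --- because $Z_{\CG_\C}(\rho(T))=Z(\CG_\C)$ is finite --- and must lie in the unique irreducible component through the nonsingular point $\rho$. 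These are two packagings of the same numerical constraint; in fact both follow immediately from the paper's own identity $\dim P^1=-2d+i+i^*+e_x+e_y+e_z\geq 0$ in (\ref{e:dimpone}) with $i=i^*=0$, so the difference is cosmetic. Your treatment of part (ii) --- the local rigidity forces orbits to be open, a variety has only finitely many pairwise disjoint open sets, and the reduction to positive characteristic uses the stability of the root datum, the dimension, the Cartan determinant, and the $\delta_m$'s for all large $p$ to invoke Theorem~\ref{t:marconj} after further excluding the primes dividing $abcd$ --- is precisely the argument the paper points to from the proof of Theorem~\ref{t:marconj}.
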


\begin{remk}
\begin{enumerate}[(a)]
\item We do not know in case (ii)  if there is necessarily  any such representation.
\item If $\delta_a^{\uG(\C)}+\delta_b^{\uG(\C)}+\delta_c^{\uG(\C)}> 2 \dim \uG$ then as we will show later in the paper, there is usually a nontrivial deformation space of Zariski dense representations of $T$ into $\uG(\mathbb{C})$, but we do not know if this is always the case.
\end{enumerate}
\end{remk}

\begin{proof}
The first part follows from Scott's formula \cite{Scott} in a similar way to Marion's argument in \cite[\S2]{Marionconj}. The second part follows from the argument used in the proof of Theorem \ref{t:marconj}: In this case all Zariski dense representations of $T$ to $\CG_\mathbb{C}$ (and also to $\CG_\mathbb{F}$ if ${\rm char}(\mathbb{F})$ is sufficiently large) are  locally rigid and hence there are only finitely many. 
\end{proof}

In order to put the open cases of Theorem \ref{t:main} in the context of the rigidity conjecture, we now classify hyperbolic triples of integers for a simple algebraic group $\uG(\mathbb{C})$ over $\mathbb{C}$ of adjoint type.

\begin{prop}
Let $\uG(\mathbb{C})$ be of adjoint type and $(a,b,c)$ be a hyperbolic triple of integers.
\begin{enumerate}[(i)]
\item There are no reducible hyperbolic triples of integers for $\uG(\mathbb{C})$, i.e. triples with $\delta_a^{\uG(\mathbb{C})}+\delta_b^{\uG(\mathbb{C})}+\delta_c^{\uG(\mathbb{C})}<2 \dim \uG$.
\item The pairs $(\uG(\mathbb{C}),(a,b,c))$ for which $(a,b,c)$ is rigid for $\uG(\mathbb{C})$, i.e. $\delta_a^{\uG(\mathbb{C})}+\delta_b^{\uG(\mathbb{C})}+\delta_c^{\uG(\mathbb{C})}=2 \dim \uG$,  are as in the following table:\\
\begin{center}
\begin{tabular}{|l|l|}
\hline
$\uG(\mathbb{C})$ & $(a,b,c)$\\
\hline
$A_1(\mathbb{C})$ & any\\
$A_2(\mathbb{C})$ & $(2,b,c)$\\
$A_3(\mathbb{C})$ & $(2,3,c)$\\
$A_4(\mathbb{C})$ & $(2,3,c)$\\
\hline
$C_2(\mathbb{C})$ & $(2,3,c)$, $(3,3,c)$\\
\hline
$G_2(\mathbb{C})$ & $(2,4,5)$, $(2,5,5)$\\
\hline
\end{tabular}
\end{center}
\end{enumerate}
\end{prop}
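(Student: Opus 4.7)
The plan is to combine the principal embedding $\mathrm{PGL}_2\to\uG$ with Lemma \ref{l:ineq} to prove both parts simultaneously, followed by a case-by-case verification of rigidity in the six exceptional configurations surfaced by that lemma.

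First, for any hyperbolic $(a,b,c)$, fix a $\mathrm{PGL}_2(\R)$-dense representation $T\to\mathrm{PGL}_2(\R)$ with $x,y,z$ going to elliptic elements of exact orders $a,b,c$ (afforded by the $T$-action on $\mathbb{H}^2$), and compose with the principal $\mathrm{PGL}_2\to\uG$ to obtain $\rho_0\colon T\to\uG(\C)$. Formula (\ref{e:valex}) identifies the conjugacy-class dimension of $\rho_0(x)$ in $\uG(\C)$ as $e_x=\dim\uG-\sum_{j=1}^{r}(1+2\lfloor e_j/a\rfloor)$, and analogously for $y,z$; hence $\delta_a^{\uG(\C)}\geq e_x$, $\delta_b^{\uG(\C)}\geq e_y$, $\delta_c^{\uG(\C)}\geq e_z$. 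Summing and applying Lemma \ref{l:ineq}, which asserts $\sum_{k,j}(1+2\lfloor e_j/n_k\rfloor)\leq \dim\uG$ always, with equality precisely in the cases (a)--(f) of that lemma, yields
\begin{equation*}
\delta_a^{\uG(\C)}+\delta_b^{\uG(\C)}+\delta_c^{\uG(\C)}\geq e_x+e_y+e_z\geq 2\dim\uG.
\end{equation*}
This proves (i), and shows that any rigid pair $(\uG,(a,b,c))$ must already lie in the Lemma \ref{l:ineq} exceptional list.

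For (ii), note that each exceptional case satisfies $e_x+e_y+e_z=2\dim\uG$ exactly, so rigidity reduces to the identities $\delta_m^{\uG(\C)}=e_t$ for each pair $(m,t)\in\{(a,x),(b,y),(c,z)\}$. Intersecting the Lemma \ref{l:ineq} exceptional configurations with the ordering $a\leq b\leq c$ and hyperbolicity $\mu<1$ produces exactly the entries of the table, and it remains to verify entry by entry that $\rho_0(t)$ realizes a maximum-dimensional conjugacy class of semisimple elements of order dividing $m$ in $\uG(\C)$.

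I expect this case check to be the principal obstacle. For the $A$-type entries it is immediate: a semisimple class in $\mathrm{PGL}_n$ is determined by the multiplicity partition $(m_i)$ of its eigenvalues and has dimension $n^2-\sum m_i^2$; since elements of order dividing $m$ have at most $m$ distinct eigenvalues, a finite enumeration recovers $\delta_m^{A_{n-1}}=e_t$ in each case. The subtler verifications are for $C_2=\mathrm{PSp}_4$ and $G_2$. For $C_2$, the maximum involution class has dimension $6$ and is realized by an element $g\in\mathrm{Sp}_4$ with $g^2=-I$ (eigenvalues $\pm i$ of multiplicity $2$, isotropic eigenspaces paired by the symplectic form, centralizer $\mathrm{GL}_2$), matching $e_x=6$; an analogous enumeration of elements with $g^3=\pm I$ gives $\delta_3^{C_2}=6$, and for every $c\geq 4$ a lift in $\mathrm{Sp}_4$ of order $2c$ with $g^c=-I$ provides a regular semisimple element of order $c$ in $\mathrm{PSp}_4$, yielding $\delta_c^{C_2}=8$. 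For $G_2$, Springer's theory of regular elements identifies the regular numbers of $W(G_2)$ as $\{1,2,3,6\}$, so no regular semisimple element of order $4$ or $5$ exists in $G_2(\C)$; the maximum-dimensional class of order $m\in\{4,5\}$ then has centralizer a rank-one pseudo-Levi of type $A_1\times T_1$ and dimension $4$, giving class dimension $10$ and matching the principal-rep value.
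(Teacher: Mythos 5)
Your proposal takes a genuinely different route from the paper's. The paper disposes of this proposition in two sentences, deferring to Marion's computation of $\delta_m^{\uG}$ in \cite[Theorem 3]{Marionconj} (extended to non-prime $a,b,c$) together with a model-theoretic transfer from large positive characteristic to $\C$. You instead argue directly from the principal homomorphism and Lemma~\ref{l:ineq}: the observation that $\rho_0(t)$ lies in $\uG_{[m]}$ gives $\delta_m^{\uG(\C)}\geq e_t$, and combining (\ref{e:valex}) with Lemma~\ref{l:ineq} (and the Remark following it, which you are implicitly using to get $\sum_{k,j}(1+2\lfloor e_j/n_k\rfloor)\leq\dim\uG$ with equality precisely in the exceptional cases) immediately yields $\delta_a+\delta_b+\delta_c\geq e_x+e_y+e_z\geq 2\dim\uG$. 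That settles (i) and narrows the candidates for (ii) to the six exceptional configurations, which you then intersect with the constraints $a\leq b\leq c$ and $\mu<1$. The payoff is that the proof is self-contained in the paper's own machinery rather than outsourced; the cost is that you still have to verify $\delta_m=e_t$ entry by entry, which Marion had done (for prime $a,b,c$).

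The reduction and the $A_r$, $C_2$ verifications you sketch are correct as far as I can check them. A few points deserve more care. First, Lemma~\ref{l:ineq} as stated gives a strict inequality outside the exceptional list and is silent on the exceptional cases; you need the paper's Remark following the Lemma (equality in all exceptional cases) to conclude that the sum never exceeds $\dim\uG$, so you should cite it. Second, for $G_2$ the appeal to Springer's regular numbers of $W(G_2)$ to rule out regular semisimple elements of order $4$ or $5$ is a bit loose: the regular numbers of $W$ concern regular eigenvectors of $W$ in its reflection representation, and the bridge to the nonexistence of regular torsion in $T(\C)$ needs to be spelled out or replaced by a direct computation. The direct computation is short (identify $T[d]$ with $(\Z/d)^2$ via $\alpha_1,\alpha_2$ and check that every such $d$-torsion element kills some root) and is probably the cleaner route. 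Third, having ruled out regular elements of order $4$ and $5$, one still has to exhibit an element of order $m$ with centralizer of dimension exactly $4$ (so class dimension $10$); you assert this but it is worth exhibiting a witness, e.g.\ the element with $\alpha_1\mapsto\zeta_m$, $\alpha_2\mapsto 1$, whose centralizer is $T\cdot\langle U_{\pm\alpha_2}\rangle$. With these points tightened, your argument is a valid alternative proof.
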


\begin{proof}
One can extend the argument given in  the proof of \cite[Theorem 3]{Marionconj} to simple algebraic groups defined over $\mathbb{C}$ and to the case where $a,b,c$ are not necessarily primes. Note that by model theory, the result over $\mathbb{C}$ follows from the same result over algebraically closed fields of large characteristic.
\end{proof}

\begin{rmk}
These cases are also rigid over an algebraically closed field of characteristic $p$, for every prime $p$. Thus, our proof of Marion's conjecture (see Theorem \ref{t:marconj}) shows that in all these cases $T_{a,b,c}$ is not saturated with finite quotients of type $X$ (where $X$ is one of the six types in the above table).
\end{rmk}

\section{Deformations and saturation with finite quotients}\label{s:posres}
%\textbf{(Maybe we could state  somewhere in this section that we can also argue in the compact setting, i.e. $\mathbb{F}=\mathbb{R}$. Indeed in this first paper we use the compact setting, and in the second one we argue with $X(\mathbb{C}) $.)}

In the previous section, we used deformation theory to prove that many finite simple groups are not quotients of a given triangle group $T_{a,b,c}$. In this section we will use this theory to show that many of them are.
Recall that, with finitely many exceptions $(X,p,\ell)$, for any given connected Dynkin diagram $X$ and any prime $p$, there exists a simply connected, quasisimple  algebraic group $\uG$ defined over $\F_p$ of prime characteristic $p$  such that for each $\ell$, $X(p^\ell)$ is the quotient of  $\uG(\F_{p^\ell})$ by its center.  We denote by $X(\C)$ the group of complex points of
the adjoint simple algebraic group of type $X$ over $\C$.

Let us recall Definition \ref{d:sat}, defining the notion of $T=T_{a,b,c}$ being saturated with finite (simple) quotients of type $X$. Clearly this definition makes sense for every finitely  generated group, not only
for triangle groups.

The key point of our method is the following theorem which is of independent interest:

\begin{thm}\label{t:key}
Let $\Gamma$ be a finitely presented group and  $X$ be a connected Dynkin diagram. The following conditions are equivalent:
\begin{enumerate}[(1)]
\item There exists a representation $\rho: \Gamma \rightarrow X(\mathbb{C})$ with Zariski dense image  such that $\rho$ is not locally rigid in $\Hom(\Gamma,X(\mathbb{C}))$.
\item The group $\Gamma$ is saturated with finite quotients of type $X$.
\item For infinitely many primes $p$, $\Gamma$ has infinitely many quotients of type $X(p^\ell)$
\item If $f_\Gamma(p) \in \mathbb{N}\cup \{\infty\}$ denotes the cardinality of $\{\ell \in \mathbb{N}: X(p^\ell)\ \textrm{is a quotient} \ of \ \Gamma\}$, then $$\limsup_{p\to \infty}  f_\Gamma(p)=\infty.$$
\item The number of epimorphisms $|{\rm Epi}(\Gamma,X(p))|$, up to conjugation by $X(p)$, is unbounded as a function of $p$.
\end{enumerate}
\end{thm}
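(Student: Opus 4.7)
The plan is to prove the five conditions equivalent via the circular chain $(1) \Rightarrow (2) \Rightarrow (3) \Rightarrow (4) \Rightarrow (5) \Rightarrow (1)$. The implications $(2) \Rightarrow (3) \Rightarrow (4)$ will be immediate from Definition \ref{d:sat}: saturation produces $X(p^{e\ell})$ as a quotient for every $\ell$ and every $p>p_0$, so $f_\Gamma(p)=\infty$ for infinitely many $p$, and therefore $\limsup_p f_\Gamma(p)=\infty$.

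The substantive direction will be $(1) \Rightarrow (2)$, which I would prove by spreading out and reducing modulo $p$. Given $\rho:\Gamma\to X(\mathbb{C})$ Zariski dense and not locally rigid, Weil's Theorem \ref{t:locrig} gives $\dim W > \dim X$ for the component $W$ of $\mathrm{Hom}(\Gamma,\underline{X}_{\mathbb{C}})$ through $\rho$. I take $\rho$ and $W$ to be defined over a finitely generated subring $R\subset \mathbb{C}$, which we may assume is the $S$-integers in a number field $K$. For all sufficiently large $p$ and every maximal ideal $\mathfrak{m}\mid p$ of $R$, generic flatness gives $\dim \mathcal{W}_\mathfrak{m}=\dim W>\dim X$, with Zariski-density locus still open and non-empty. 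Lang--Weil then yields $|\mathcal{W}_\mathfrak{m}(\mathbb{F}_{p^{en}})|\sim (p^e)^{n\dim W}$ (where $e=[R/\mathfrak{m}:\mathbb{F}_p]$), with the non-dense locus contributing a lower-order term. The next step is to invoke Nori's theorem (in the Larsen--Pink refinement): for large $p$, any Zariski-dense finite subgroup of $X(\overline{\mathbb{F}_p})$ contained in $X(\mathbb{F}_{p^{en}})$ contains the simple Chevalley group $X(p^k)$ for some $k\mid en$, with bounded index. A subfield sieve (inclusion--exclusion on divisors of $en$, controlled by Lang--Weil) removes the $\mathbb{F}_{p^{en}}$-points defined over a proper subfield, leaving epimorphisms $\Gamma\twoheadrightarrow X(p^{en})$ for all sufficiently large $n$. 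Chebotarev applied to $K$ ensures a positive density of primes split completely, giving $e=1$ and the refined half of saturation.

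For $(5) \Rightarrow (1)$ I would argue by contradiction. The affine scheme $\mathcal{R}:=\mathrm{Hom}(\Gamma,\underline{X})$ is of finite type over $\mathbb{Z}$, so has finitely many irreducible components. If every horizontal component had generic ($\mathbb{Q}$-) dimension $\leq \dim X$, then for large $p$ each component of $\mathcal{R}_{\mathbb{F}_p}$ carrying a Zariski-dense representation would in fact coincide with a single $X(\overline{\mathbb{F}_p})$-orbit (of dimension $\dim X$, the centralizer of any dense representation being trivial), which by Lang's theorem splits into a single $X(p)$-orbit of $\mathbb{F}_p$-points. This would force $n(p)=O(1)$, contradicting (5). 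Therefore some horizontal component has $\mathbb{Q}$-dimension $>\dim X$, and a generic-point argument transferring an $\mathbb{F}_p$-epimorphism to a $\overline{\mathbb{Q}}$-point supplies a Zariski-dense, non-locally-rigid representation over $\mathbb{C}$. The implication $(4) \Rightarrow (5)$ can be handled in the same spirit: each $\ell$ with $X(p^\ell)$ a quotient of $\Gamma$ produces a distinct $X(\overline{\mathbb{F}_p})$-orbit on the dense locus of $\mathcal{R}_{\overline{\mathbb{F}_p}}$, so $f_\Gamma(p)\to\infty$ along a subsequence again forces a horizontal component of generic dimension $>\dim X$, giving (1) and hence (5) by the $\mathbb{F}_p$-point count used in $(1) \Rightarrow (2)$.

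The main obstacle will be the precise deployment of Nori's theorem inside $(1) \Rightarrow (2)$: a generic $\mathbb{F}_{p^m}$-point of the deformation variety only \emph{a priori} gives a homomorphism into $X(\mathbb{F}_{p^m})$, whereas the saturation conclusion demands an epimorphism onto the simple group $X(p^m)$ itself, not merely a subgroup of $X(p^k)$ for some $k\mid m$. The subfield issue is to be overcome by a Lang--Weil--controlled inclusion-exclusion over divisors of $m$; the possible bounded-index discrepancy between a Zariski-dense finite subgroup and an honest simple Chevalley group in $X(\overline{\mathbb{F}_p})$ is harmless, as it can be absorbed into the constant $e$ in the definition of saturation.
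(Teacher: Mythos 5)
Your route through Lang--Weil and Nori/Larsen--Pink for $(1)\Rightarrow(2)$, and a dimension/orbit count for $(5)\Rightarrow(1)$, is genuinely different from the paper's, which applies Weisfeiler's strong approximation theorem together with two auxiliary lemmas about finitely generated $\mathbb{Z}$-algebras, and uses a Witt-vector lift plus Vasiu's surjectivity criterion for the converse. Both strategies are reasonable, and the discussion of the subfield sieve and the constructible spread-out is in the right spirit.

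However, there is a genuine gap, and it is exactly the point you try to dismiss in your last sentence. In the paper's conventions $X(\mathbb{C})$ denotes the \emph{adjoint} complex group, while $X(q)$ denotes the \emph{simple} finite Chevalley group, i.e., the derived subgroup of the adjoint group's $\mathbb{F}_q$-points, or equivalently the quotient of the simply connected form by its centre. An $\mathbb{F}_q$-point of $\Hom(\Gamma,\underline{X}^{\mathrm{ad}})$ with Zariski-dense image gives a subgroup $H$ with $X(q)\leq H\leq \underline{X}^{\mathrm{ad}}(\mathbb{F}_q)$, but this does \emph{not} produce an epimorphism $\Gamma\twoheadrightarrow X(q)$: even though $X(q)=H^{\mathrm{der}}$ and $H/X(q)$ is abelian of bounded order, the simple group need not be a \emph{quotient} of $H$ (compare $A_3\triangleleft S_3$). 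The ``bounded-index discrepancy'' cannot be absorbed into the exponent $e$ in Definition~\ref{d:sat}: changing $e$ rescales $\ell$, it does not change the group-theoretic problem. The paper resolves this precisely by passing to the finite set $\Sigma$ of central extensions $\tilde\Gamma$ of $\Gamma$ (finiteness of $H^2(\Gamma,A)$ uses that $\Gamma$ is finitely \emph{presented}, not just finitely generated), working with the simply connected Chevalley scheme $\CG$, and proving the tilde-versions $(\tilde1)$--$(\tilde5)$ for each $\tilde\Gamma$; the same device is needed in your $(5)\Rightarrow(1)$ direction to relate $\mathrm{Epi}(\Gamma,X(p))$ to points of a scheme of homomorphisms. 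Without this reduction your argument establishes the equivalence of conditions about \emph{adjoint-group} quotients, not about simple quotients $X(p^\ell)$, which is what the theorem asserts. Secondarily, your $\neg(1)\Rightarrow\neg(4)$ argument via ``finitely many orbits with bounded fields of definition after reduction mod $p$'' is plausible but stated too loosely; the paper instead bounds traces of Zariski-dense homomorphisms in a fixed number field $K$, lifts a surjection $\tilde\Gamma\to\CG(\mathbb{F}_q)$ to $\CG(W(\mathbb{F}_q))$ via smoothness, and invokes Vasiu's theorem to get a dense $p$-adic representation whose traces must then generate $\mathbb{Q}_q$ over $\mathbb{Q}_p$, forcing $[\mathbb{F}_q:\mathbb{F}_p]\leq[K:\mathbb{Q}]$; if you want to run the orbit-count version you need to supply the analogous uniform control on fields of definition.
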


We use the following lemma.

\begin{lem}
\label{specialize}
Let $A\subset B$ be integral domains finitely generated over $\Z$  such that $B$ is a finitely generated $A$-module.  For $q$ a prime power, let $f(q)$ denote
the cardinality of the set of ring homomorphisms $\phi\colon B\to \F_q$ such that $\phi(A) = \F_q$.  The following conditions are equivalent:
\begin{enumerate}[(a)]
\item The field of fractions of $B$ is an extension of $\Q$ of positive transcendence degree.
\item For infinitely many primes $p$, $f(p^\ell)>0$ for all $\ell\in\N$.
\item The restriction of $f$ to primes is unbounded.
\item There exists a number field $K$ with ring of integers $\cO$ such that if $p$ is sufficiently large
and $\Hom(\cO,\F_q)$ is not empty for $q=p^\ell$, then $f(q) > 0$.
\end{enumerate}

\end{lem}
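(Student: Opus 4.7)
The plan is to take condition (a) as the pivot and prove the implications $(a)\Rightarrow(d)\Rightarrow(b)\Rightarrow(a)$ together with $(a)\Leftrightarrow(c)$, using Noether normalization over $\Z$, the Lang--Weil estimate, and the Chebotarev density theorem. A preliminary reduction handles the case where $B$ has positive characteristic $p_0$: then $f(q)=0$ whenever $\mathrm{char}(\F_q)\neq p_0$, $f$ is bounded on primes by $|\Hom(B,\F_{p_0})|$, and all four conditions manifestly fail. We may therefore assume $B$ is a domain of characteristic zero, flat over $\Z$.

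For the core implication $(a)\Rightarrow(d)$, let $M$ be the Galois closure of the finite extension $\operatorname{Frac}(B)/\operatorname{Frac}(A)$, let $D$ be the integral closure of $A$ in $M$ (a finite $A$-module, hence a finitely generated $\Z$-algebra), let $K$ be the algebraic closure of $\Q$ in $M$, and set $\cO=\cO_K$. Because $K$ is algebraically closed in $M$ and the characteristic is zero, $M/K$ is a regular extension; combined with generic flatness, we can invert some nonzero integer $N$ to obtain inclusions $\cO[1/N]\subseteq A[1/N]\subseteq B[1/N]\subseteq D[1/N]$ with $\Spec D[1/N]\to\Spec \cO[1/N]$ flat and having geometrically integral fibres of dimension $n:=\operatorname{trdeg}_\Q\operatorname{Frac}(B)$. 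For $p>N$ and any $\iota\in\Hom(\cO,\F_q)$ with $q=p^\ell$, Lang--Weil gives $|Y_\iota(\F_q)|=q^n+O(q^{n-1/2})$ uniformly in $\iota,q$, where $Y_\iota:=\Spec(D\otimes_{\cO,\iota}\F_q)$. The subset of $Y_\iota(\F_q)$ whose restriction to $A$ lies in a proper subfield $\F_{q'}\subsetneq\F_q$ maps into the analogous fibre $W_{\iota'}(\F_{q'})$ for $A$ with fibres of size at most $[M:\operatorname{Frac}(A)]$, giving an overall count of $O(q^{n/2})$ that is negligible for $p$ large. Hence there exists $\widetilde\phi:D\to\F_q$ extending $\iota$ with $\widetilde\phi(A)=\F_q$, and $\phi:=\widetilde\phi|_B$ witnesses $f(q)>0$.

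The other implications fall out quickly. $(d)\Rightarrow(b)$: Chebotarev gives a positive density of primes $p$ splitting completely in $\cO$, so $\Hom(\cO,\F_{p^\ell})\neq\emptyset$ for every $\ell$ simultaneously, and (d) then forces $f(p^\ell)>0$ for all $\ell$ once $p$ is large. $(b)\Rightarrow(a)$: the existence of a surjection $A\to\F_{p^\ell}$ for every $\ell$ (and infinitely many $p$) equips $\Spec A\otimes\F_p$ with a closed point of every residue field degree over $\F_p$, hence $\dim \Spec A\otimes\F_p\geq 1$, which by generic flatness forces $n\geq 1$. For $(a)\Rightarrow(c)$, specializing the $(a)\Rightarrow(d)$ argument to $\ell=1$ over primes $p$ split in $\cO$ gives $f(p)\gg p^n/[M:\operatorname{Frac}(B)]\to\infty$; and $(c)\Rightarrow(a)$ holds because if $f(p)$ is unbounded on primes then $|\Spec B(\F_p)|$ is unbounded, which is impossible when $\operatorname{Frac}(B)$ is algebraic over $\Q$. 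The main subtlety lies in $(a)\Rightarrow(d)$: requiring $\widetilde\phi(A)=\F_q$ rather than just $\widetilde\phi(D)=\F_q$ is precisely what forces the passage to the Galois closure $D$, where the ``bad'' locus $\{\widetilde\phi(A)\subsetneq\F_q\}$ becomes a subvariety of dimension $\leq n/2$ inside each geometrically integral fibre $Y_\iota$, and is therefore Lang--Weil-negligible.
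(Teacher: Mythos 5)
Your proof is essentially correct and follows the same broad strategy as the paper (spread out over a number ring, apply Lang--Weil/Weil bounds to the geometrically integral fibres, estimate the ``bad'' set of points whose image in $\Spec A$ lives over a proper subfield, then feed the result into Chebotarev). The main technical difference is that you pass to the integral closure $D$ of $A$ in the Galois closure $M$ of $\operatorname{Frac}(B)/\operatorname{Frac}(A)$ and fibre $\Spec D$ over $\Spec\cO_K$ (with $K$ the constant field of $M$), whereas the paper works directly with $\CX=\Spec A$ and $\CY=\Spec B$ base-changed to the ring of integers of the constant field $K_0$ of $\operatorname{Frac}(B)$ and argues via geometrically irreducible \emph{components}. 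Your route buys a single geometrically integral fibre to which Lang--Weil applies cleanly, at the cost of an extra finite cover; the paper's route avoids $D$ and instead carries the bookkeeping over the components. Both are valid, and your explicit architecture of implications $(a)\Rightarrow(d)\Rightarrow(b)\Rightarrow(a)$ together with $(a)\Leftrightarrow(c)$ correctly establishes the full equivalence, with the positive-characteristic case disposed of first as in the paper.

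Two inaccuracies are worth flagging, neither fatal. First, the asserted chain $\cO[1/N]\subseteq A[1/N]\subseteq B[1/N]\subseteq D[1/N]$ is wrong in general: the constant field $K$ of $M$ need not lie in $\operatorname{Frac}(A)$ (e.g.\ $A=\Z[t]$, $B=\Z[t,\sqrt2]$ gives $K=\Q(\sqrt2)\not\subseteq\Q(t)$), so $\cO\not\subseteq A[1/N]$. Your actual counting argument never uses this inclusion --- only $\cO\subseteq D$ and $A\subseteq B\subseteq D$ are needed, and the bad set is bounded by mapping to $\Hom(A,\F_{q'})$ without any reference to $\iota'$ coming from $\cO$ --- so just drop the false inclusion. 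Second, the closing remark that the bad locus ``becomes a subvariety of dimension $\leq n/2$ inside each geometrically integral fibre $Y_\iota$'' is not right: the condition $\widetilde\phi(A)\subsetneq\F_q$ is Galois-theoretic, not Zariski-closed, so the bad set is merely a subset of $Y_\iota(\F_q)$ of cardinality $O(q^{n/2}\log q)$, which is what you actually prove and all that is needed. Relatedly, the passage to $D$ is not what makes the bad set small --- the same count works directly for $\phi\colon B\to\F_q$ with $\phi(A)\subsetneq\F_q$, as the paper does --- so your motivation for introducing $D$ is a bit off even though the device itself is harmless and does tidy up the Lang--Weil input.
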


\begin{proof}
The image of $\Spec A\to \Spec \Z$ is a constructible set, so either it is a single prime $(p)$ or
it is the complement of a finite set of primes.  In the first case, the field of fractions of $A$ has characteristic $p>0$
and $f(q)=0$ for all powers of every prime except $p$.  Thus, we may assume that we are in the second case.

Let $\CX=\Spec A$ and $\CY=\Spec B$.   Let $K_0$ denote the integral closure of $\Q$ in the fraction field of $B$, which contains the
integral closure of $\Q$ in the fraction field of $A$.
If $K$ is any finite extension of $K_0$ which is Galois over $\Q$,
then $\CX\times_{\Spec \Q} \Spec K$ (respectively, $\CY\times_{\Spec \Q} \Spec K$) is a finite disjoint
union of geometrically irreducible components \cite[Cor.~4.5.10]{EGA}.
Let $\CX_{\cO}:= \CX\times_{\Spec \Z}\Spec \cO$
and $\CY_{\cO}:= \CY\times_{\Spec \Z}\Spec \cO$, where $\cO$ is the ring of integers of $K$.
If $p$ is unramified in $K$ and decomposes into primes of $\cO$ with residue field isomorphic to $\F_{p^j}$,
then for every positive integer $i$, and every point $x$ of $\CX$ with residue field isomorphic to $\F_{p^i}$,
there are $\frac{[K:\Q](i,j)}{j}$ points of $\CX_{\cO}$ lying over $x$, and each has residue field isomorphic to
$\F_{p^{[i,j]}}$.  Likewise, for every point $y$ of $\CY$ with residue field isomorphic to $\F_{p^i}$,
there are $\frac{[K:\Q](i,j)}{j}$ points of $\CY_{\cO}$ lying over $y$, and each has residue field isomorphic to
$\F_{p^{[i,j]}}$.

As $A$ and $B$ are integral domains, the same is true of $A\otimes\Q$ and $B\otimes \Q$, so the
irreducible components of $A\otimes K$ and $B\otimes K$ each form a single Galois orbit.
If $n$ denotes the dimension of the generic fiber of $\CX$, then
every geometric component of this generic fiber has dimension $n$, and the same is true for $\CY$.
By definition of $K$, the generic fiber of $\CX$ (respectively, $\CY$) over $\Spec \cO$ has geometrically irreducible  components,  so the same is true of all components of all but finitely many
fibers of $\CX$ (respectively, $\CY$) over closed points of
$\Spec \cO$ \cite[Prop.~9.7.8]{EGA}.  Moreover, all components of all but finitely many primes are $n$-dimensional \cite[Prop.~9.5.5]{EGA}.

If $n=0$, therefore, for all $p$ sufficiently large, the sum $\sum_{l=1}^\infty f(p^\ell)$ is  finite
and bounded above by a constant independent of $p$.  Thus, none of the conditions (b)--(d) can hold, and we are done.
We therefore assume $n>0$.  If $p\gg 0$ and $q = p^\ell$ is the cardinality of a residue field of a prime ideal of $\cO$, then by the Weil bound,
$$|\CY_K(\F_q)| \ge \frac{q^n}2,$$
while if $\F_{p^k}$ is a subfield of $\F_q$,
$$|\CX(\F_{p^k})| = O(p^{nk}).$$
The fibers of the morphism $\CY_K\to \CX_K$ are finite and therefore bounded.
It follows that if $q\gg 0$, the number of points in $\CY(\F_q)$ which map to points of $\CX(\F_q)$ which are not defined over a proper subfield
can be bounded below by $\epsilon q^n$ for some $\epsilon > 0$.  This immediately implies (d), and combined with Chebotarev density implies (b) and (c).
\end{proof}

\begin{lem}
\label{annoying}
If $\uG$ is a quasisimple algebraic group over a field $K$ of characteristic zero, $F$ an extension field of $K$, and
$\Gamma\subset \uG(F)$ a Zariski dense subgroup such that $\Gamma\cap \uG(K)$ is of finite index in $\Gamma$, and the adjoint trace $\Ad(\gamma)$ lies in $K$ for all $\gamma\in \Gamma$, then there exists a finite extension $L$ of $K$ in $F$ such that $\Gamma\subset \uG(L)$.
\end{lem}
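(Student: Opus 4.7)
The plan is to reduce, via a choice of coset representatives of $\Gamma_0 := \Gamma \cap \uG(K)$ in $\Gamma$, to the statement that each individual $\gamma \in \Gamma$ lies in $\uG(L_\gamma)$ for some finite extension $L_\gamma/K$ inside $F$; the desired $L$ is then the compositum over the finitely many coset representatives. As a preliminary I would observe that $\Gamma_0$ is itself Zariski dense in $\uG$: its Zariski closure is a closed subgroup whose finitely many $\Gamma$-translates cover the connected (by quasisimplicity of $\uG$) group $\uG$.

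The heart of the argument is a Jacobson density step. Since $\uG$ is absolutely simple modulo its center, the adjoint representation $\Ad \colon \uG \to \mathrm{GL}(\mathfrak{g})$ is absolutely irreducible; combined with Zariski density of $\Gamma_0$, any element of $\mathrm{End}_K(\mathfrak{g})$ commuting with $\Ad(\Gamma_0)$ must, after extending scalars to $\bar K$, commute with $\Ad(\uG(\bar K))$, and hence be a scalar. Because the $K$-linear span of $\Ad(\Gamma_0)$ is automatically a $K$-subalgebra of $\mathrm{End}_K(\mathfrak{g})$ (closure under multiplication comes from $\Gamma_0$ being a group), Jacobson density yields
\[
K\text{-}\mathrm{span}\bigl(\Ad(\Gamma_0)\bigr) = \mathrm{End}_K(\mathfrak{g}) \cong M_n(K), \quad n := \dim \uG.
\]
In particular one can select $\delta_1,\ldots,\delta_{n^2} \in \Gamma_0$ so that $\{\Ad(\delta_j)\}$ is a $K$-basis of $\mathrm{End}_K(\mathfrak{g})$, and this set remains an $F$-basis of $\mathrm{End}_F(\mathfrak{g} \otimes_K F)$.

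Given $\gamma \in \Gamma$, the hypothesis applied to each $\gamma\delta \in \Gamma$ with $\delta \in \Gamma_0$ gives $\Tr\bigl(\Ad(\gamma)\Ad(\delta)\bigr) \in K$. Expanding $\Ad(\gamma) = \sum_j a_j \Ad(\delta_j)$ uniquely with $a_j \in F$ and inverting the Gram matrix $\bigl(\Tr(\Ad(\delta_i)\Ad(\delta_j))\bigr) \in M_{n^2}(K)$, which is invertible by the nondegeneracy of the trace form on $M_n$, forces $a_j \in K$. Thus $\Ad(\gamma) \in \mathrm{End}_K(\mathfrak{g}) \cap \Ad(\uG)(F) = \Ad(\uG)(K) = (\uG/Z)(K)$, where $Z$ is the finite center of $\uG$. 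The fiber of the central isogeny $\uG \to \uG/Z$ above the $K$-point $\Ad(\gamma)$ is a $Z$-torsor, hence a finite $K$-scheme $T = \Spec R$ with $R$ a finite-dimensional $K$-algebra; the $F$-point $\gamma \in T(F)$ corresponds to a $K$-algebra map $R \to F$ whose image is a finite-dimensional $K$-subalgebra of $F$, i.e.\ a finite field extension $L_\gamma \subset F$ of $K$. The main obstacle is the Jacobson density step, where one must combine Zariski density of $\Gamma_0$ with absolute irreducibility of $\Ad$ to deduce the $K$-linear spanning property over the (possibly non-algebraically closed) base field $K$, since neither ingredient suffices on its own.
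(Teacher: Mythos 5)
Your proof is correct and follows essentially the same route as the paper's: use Zariski density of $\Gamma_0 = \Gamma\cap\uG(K)$ together with absolute irreducibility of $\Ad$ to show the $K$-span of $\Ad(\Gamma_0)$ is all of $\mathrm{End}_K(\mathfrak{g})$, then the trace/Gram-matrix argument to put $\Ad(\gamma)$ in $\uG^{\mathrm{ad}}(K)$, then lift along the central isogeny (a $Z$-torsor over $K$) to land in $\uG(L_\gamma)$, and take a compositum over coset representatives. One small precision: triviality of the commutant is not by itself enough to invoke Jacobson density (upper-triangular matrices have scalar commutant); you should also note that $\mathfrak{g}$ is simple as a $K[\Gamma_0]$-module, which follows from the same Zariski-density-plus-absolute-irreducibility argument, or more directly cite Burnside's theorem applied to the absolutely irreducible subalgebra generated by $\Ad(\Gamma_0)$.
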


\begin{proof}
Let $\gamma\in \Gamma$.  As the adjoint representation of $\uG$ is irreducible and $\Gamma':=\Gamma\cap \uG(K)$ is Zariski dense in $\uG$, it follows that the $K$-span in the endomorphism algebra of the Lie algebra of $\uG$ of $\{\Ad(\gamma')\mid \gamma'\in\Gamma'\}$ is the whole matrix algebra.  For each $\gamma\in \Gamma$, the trace of
$\Ad(\gamma\gamma')$ lies in $K$ for all $\gamma'\in\Gamma'$, and it follows that
$\Ad(\gamma)$ is defined over $K$.  The adjoint representation factors through the adjoint group $\uG^{\mathrm{ad}}$, on which it is faithful, and it follows that the image of each $\gamma$
in $\uG^{\mathrm{ad}}(F)$ lies in $\uG^{\mathrm{ad}}(K)$.  Thus, each $\gamma$ lies in $\uG(K_\gamma)$
where $K_\gamma$ is a finite extension of $K$.  Choosing one  representative $\gamma$ for each class in $\Gamma/\Gamma'$, we obtain a  finite extension $L$ such that
$\Gamma\subset \uG(L)$.

\end{proof}

We can now prove  Theorem~\ref{t:key}.

\begin{proof}
Let $\CG /\Spec \Z$ be the simply connected split Chevalley group scheme associated to the Dynkin diagram $X$, and let $\uG:= \CG_{\C}$ denote its fiber over $\Spec \C$.
As $\Gamma$ is finitely presented, $H^2(\Gamma,A)$ is finite for any finite $\Z\Gamma$-module $A$.
Therefore, there are finitely many different central extensions of $\Gamma$ by a given finite abelian group.
Thus, whenever $X(p^\ell)$ is a quotient of $\Gamma$, $\CG(\F_{p^\ell})$ is a quotient of $\tilde \Gamma$ for $\tilde \Gamma$ an element of
a finite set $\Sigma$ of (finitely generated) central extensions of $\Gamma$, and conversely, if $\CG(\F_{p^\ell})$ is
a quotient of any $\tilde \Gamma\in \Sigma$ (indeed, of any central extension of $\Gamma)$, then $X(p^\ell)$ is a quotient of $\Gamma$.
Likewise, for every homomorphism $\Gamma\to X(\C)$ with Zariski dense image, there corresponds
a dense homomorphism from some $\tilde\Gamma\in\Sigma$ to  $\uG(\C)$, the simply connected cover of $X(\C)$, and conversely.
It therefore suffices to prove the equivalence of the following variants of conditions (1)--(5), applied to each $\tilde \Gamma\in \Sigma$:

\begin{enumerate}[(1)]
\item[($\tilde{1}$)] There exists a representation $\rho: \tilde \Gamma \rightarrow \uG(\mathbb{C})$
with Zariski dense image  such that $\rho$ is not locally rigid in $\Hom(\tilde \Gamma,\uG(\mathbb{C}))$.
\item[($\tilde{2}$)] The group $\tilde \Gamma$ is saturated with  quotients of type $\CG(\F_q)$, where $q$ ranges over prime powers.
\item[($\tilde{3}$)]  For infinitely many primes $p$, $\tilde \Gamma$ has infinitely many quotients of type $\CG(\F_{p^\ell})$.
\item[($\tilde{4}$)]  If $f_\Gamma(p) \in \mathbb{N}\cup \{\infty\}$ denotes the cardinality of $\{\ell \in \mathbb{N}: \CG(\F_{p^\ell})\ \textrm{is a quotient} \ of \ \tilde\Gamma\}$, then $$\limsup_{p\to \infty}  f_{\tilde\Gamma}(p)=\infty.$$
\item[($\tilde{5}$)]  The number of epimorphisms $|{\rm Epi}(\tilde \Gamma,\CG(\F_p))|$, up to conjugation by $\CG(\F_p)$, is unbounded as a function of $p$.
\end{enumerate}

Let $\CY := \Hom({\tilde\Gamma},\CG)$, and let $\uY := \CY_{\C}$
denote the fiber of $\CY$ over $\Spec \C$.
Then $\uY = \Spec R$ for some finitely generated $\C$-algebra $R$, and there exists a universal
homomorphism $\Phi\colon {\tilde\Gamma}\to \CG(R)$, such that
every homomorphism $\rho\colon {\tilde\Gamma}\to \uG(\C)$ corresponds, by specialization, to an element of $\uY(\C)$.

We now assume that condition ($\tilde{1}$) holds.
Let $\uZ$ denote an irreducible component of $\uY$ with function field $F$.  We assume that $\uZ$ is chosen
such that there exists $z\in \uZ(\C)$ corresponding to a homomorphism with Zariski dense image which is not locally rigid.
It follows that the representation
${\tilde\Gamma}\to \CG(F)$ corresponding to the generic point is also Zariski dense.
For each $\gamma\in {\tilde\Gamma}$, there exists an element $T_\gamma\in R$ given by $\Tr(\Ad(\Phi(\gamma)))$.  If every $T_\gamma$ maps to a constant under the homomorphism $R\to F$, then traces are fixed as $\rho$ varies over $\CZ(\C)$.  By the Brauer-Nesbitt theorem, the representations $\Ad\circ \rho$ are all isomorphic as $\rho$ ranges over all representations
$\tilde \Gamma\to \uG(\C)$ corresponding to points of $\uZ(\C)$.  As the outer automorphism group of $\uG$ is finite, up to conjugation, there are
only finitely many possibilities for $\rho$.  This contradicts the assumption that some $\rho$ is not locally rigid.  Thus, some $T_\gamma$ has non-constant image in $F$.

We now apply Weisfeiler's strong approximation theorem
to $\rho\colon {\tilde\Gamma}\to \CG(F)$.
Let $A_0$ denote the subring of $F$ generated by the image of
$\{T_\gamma\mid\gamma\in{\tilde\Gamma}\}$ under the natural homomorphism $R\to F$, and let $K$ denote the field of fractions of $A_0$.  By  \cite[Theorem 1.1]{Weisfeiler}, there exists a normal subgroup $\Gamma'$ of finite index in ${\tilde\Gamma}$, a finitely generated $\Z$-algebra
$A\subset K$ with fraction field $K$, a group scheme $\CH$ over $A$, an isomorphism $i\colon \CH_F\to \CG_F$, where
$$\CH_F := \CH\times_{\Spec A} \Spec F,\ \CG_F :=  \CG\times_{\Spec \C}\Spec F,$$
and a homomorphism $\Gamma'\to \CH(A)$ whose composition with $i$ coincides with the
restriction of $\rho$ to $\Gamma'$ and  such that $\Gamma'$ maps onto $\CH(\F_q)$
for every surjective homomorphism $A\to \F_q$.

For each coset of $\Gamma'$ in ${\tilde\Gamma}$, we choose a representative $\gamma_i$.  By Lemma~\ref{annoying}, $i^{-1}(\rho(\gamma_i))\in \CH(L_i)$
for some finite extension $L_i/K$.  We can choose a finite extension $L$ of $K$ such that each $L_i$ is contained in $L$ and  $\CH_L$ is split.
Let $B$ denote a finitely generated $A$-algebra in $L$ such that $L$ is the fraction field of $B$ and $i^{-1}(\rho(\gamma_i))\in \CH(B)$ for each $i$.
Thus $i^{-1}(\rho({\tilde\Gamma}))\in \CH(B)$.  Replacing $B$ by $B[1/b]$ for some nonzero $b\in B$, we may assume $\CH_B$ is split.  As $\Spec B\to\Spec A$ is
generically finite, after replacing $A$ and $B$ by $A[1/a]$ and $B[1/a]$ respectively, we may further assume that $B$ is module-finite over $A$.

We now apply Lemma~\ref{specialize}.  Condition (a) holds, so conditions (b)--(d) hold as well.  They imply conditions ($\tilde{2}$)--($\tilde{5}$).

We now assume, on the contrary, that every representation $\rho: {\tilde\Gamma} \rightarrow \uG(\mathbb{C})$ with Zariski dense image is locally rigid.
This implies that for each irreducible component $\uZ$ of $\uY$, either $\rho(\tilde\Gamma)$ fails to be Zariski dense for all $\rho$ parametrized by $z\in \uZ(\C)$
or for each $\gamma\in \tilde \Gamma$, $\Tr (\Ad(\rho(\gamma)))$ is constant on $\uZ(\C)$.  As $\uY$ has finitely many irreducible components, there are only
finitely many possibilities for the function $\gamma\mapsto \Tr (\Ad(\rho(\gamma)))$ as $\rho$ ranges over all Zariski dense homomorphisms $\rho\colon \tilde \Gamma\to \uG(\C)$.
The set of such functions is stable by the automorphism group of $\C$, so it follows that all traces of all Zariski dense homomorphisms lie in some number field $K$.

We claim that there exists a finite collection $\CY_i$ of locally closed subschemes of $\CY$, each smooth over $\Spec\Z$, such that
every closed point of $\CY$ of characteristic $p$ sufficiently large lies in some $\CY_i$.  Indeed, we note first that without loss of generality, we may assume that $\CY$ is
irreducible and reduced.    The generic fiber of $\CY$ is a variety over $\Q$, so its singular locus is a proper closed subvariety.
Let $\CZ$ denote the Zariski closure of this subvariety in $\CY$, endowed with its reduced induced subscheme structure.
By Noetherian induction, we may assume that $\CZ$ admits such a finite collection, so it suffices to prove the same for
$\CY\setminus \CZ$.  The generic fiber of this scheme is nonsingular, so it is smooth over some open neighborhood of the generic point of
$\Spec\Z$ \cite[17.7.11]{EGA}.  At the cost of throwing out a finite set of closed fibers, what remains is smooth.

If $y$ is a closed point of $\CY$ with residue field $\F_q$ whose characteristic $p$ is sufficiently large,
by the infinitesimal lifting property of smooth morphisms, there exists a morphism
from the spectrum of $W(\F_q)$, the ring of Witt vectors over $\F_q$, to $\CY$,  mapping the closed point of
$\Spec W(\F_q)$ to $y$.
If $y$ corresponds to a surjective homomorphism $\phi\colon {\tilde\Gamma}\to \CG(\F_q)$, then
we have a homomorphism $\tilde{\phi}\colon {\tilde\Gamma}\to \CG(W(\F_q))$ which lifts this surjective homomorphism.  By
a theorem of  Vasiu \cite{Vasiu}, if $q$ is sufficiently large, this implies that $\tilde{\phi}$ has dense image.
Let $\Q_q$ denote the fraction field of $W(\F_q)$ (i.e., the unramified extension of $\Q_p$ of degree $[\F_q:\F_p]$).
As there are finitely many subextensions of $\Q_p$ in $\Q_q$, there exists $\gamma\in \tilde \Gamma$ such that
$$\Q_p(\Tr(\Ad(\tilde{\phi}(\gamma)))) = \Q_q.$$
If  $[\F_q:\F_p] > [K:\Q]$, this is impossible,
which shows that conditions ($\tilde{2}$)--($\tilde{4}$) of the theorem cannot hold.

For condition ($\tilde{5}$), we note that if $p$ is sufficiently large, $\Ad$ is an irreducible representation of $\CG({\F_p})$, so
two surjective homomorphisms
$\phi_1,\phi_2\colon {\tilde\Gamma}\to \CG(\F_p)$ such that $\Ad\circ \phi_1$ and $\Ad\circ\phi_2$
have the same semisimplification are equivalent up to tensor product with a character of the center of
$\CG(\F_q)$ (whose order is bounded independent of $p$).
If $\Ad\circ \phi_1$ and $\Ad\circ\phi_2$ have distinct semisimplifications, then
there exists $\gamma\in {\tilde\Gamma}$ such that $\Tr(\Ad(\phi_1(\gamma)))\neq \Tr(\Ad(\phi_2(\gamma)))$.
This implies that
$$\Tr(\Ad(\tilde\phi_1(\gamma)))\neq \Tr(\Ad(\tilde\phi_2(\gamma)))$$
and therefore that $\tilde \phi_1$ and $\tilde \phi_2$ correspond to non-conjugate Zariski dense homomorphisms $\tilde \Gamma\to \CG(\bar \Q_p)$.
Fixing an isomorphism between $\C$ and $\bar\Q_p$, this gives an upper bound on the number of conjugacy classes of surjective
homomorphisms ${\tilde\Gamma}\to G(\F_p)$, for any prime $p$ sufficiently large
and therefore shows condition ($\tilde{5}$) cannot hold.
\end{proof}

\begin{rmk}
When $\Gamma=T$ is a Fuchsian group, and in particular a hyperbolic triangle group, by Weil \cite{Weil}, if $\rho: T\rightarrow X(\mathbb{C})$ is a representation with Zariski dense image then $\rho$ is not locally rigid if and only if $H^1(T,{\rm Ad}\circ \rho)\neq 0$ where ${\rm Ad}$ is the action of $X(\mathbb{C})$ on its Lie algebra. So, in this case, Theorem \ref{t:key}(1)  is equivalent to:
$$(1') \ \textrm{There exists a Zariski dense representation} \ \rho:T\rightarrow X(\mathbb{C}) \ \textrm{with}\ H^1(T,{\rm Ad}\circ\rho)\neq 0.$$
\end{rmk}

The next section (and to a large extent the remainder of the paper) will be devoted to showing that for most pairs $(T,X)$ we do, indeed, have $\rho$ satisfying condition $(1')$.
In fact, we prove the existence of a Zariski dense homomorphism, not locally rigid, from $T$ to the real points of the compact real simple (in particular, adjoint) group with Dynkin diagram $X$.

\section{Deformations of the principal homomorphism}\label{s:dph}

In  this section we will apply Theorem \ref{t:key} to show that many triangle groups are saturated with finite  quotients of various types. To this end we will deform the  principal homomorphism introduced in \S \ref{s:dtfg}.
We have the following theorem:

\begin{thm}\label{t:ll}  Let $T=T_{a,b,c}$, $G=\uG(\mathbb{R})$,  where $\uG$ is a compact,  adjoint, simple, real algebraic group,  $\rho_0: T \rightarrow G$ a homomorphism, and $\uH$ the Zariski closure of $\rho_0(T)$. Assume
\begin{enumerate}[(a)]
\item  $\uH$ is semisimple and connected.
\item $\uH$ is a maximal proper subgroup of $\uG$.
\item If $\mathfrak{g}
$ (respectively, $\mathfrak{h}$) is the Lie algebra of $\uG$ (respectively, $\uH$) (where the action is via ${\rm Ad}\circ \rho_0$), then
$$\dim H^1(T, \mathfrak{h}) < \dim H^1(T,\mathfrak{g}).$$
%\item $\rho_0$ is a nonsingular point of $\Hom(T,G)$ \textbf{(do we need to say $\rho_0$ is also a nonsingular point of $\Hom(T,H)$?)}.
\end{enumerate}
Then  the following assertions hold:
\begin{enumerate}[(i)]
\item The homomorphism $\rho_0$ determines  a  nonsingular point of $\Hom(T,\uG)$.
\item Some irreducible component of $\Hom(T,\uG)$  containing $\rho_0$ has dimension  $\dim H^1(T, \mathfrak{g})+\dim \uG$ and contains a nonsingular point $\rho$ with Zariski dense image.
\item For $\rho$ as in (ii), $\dim H^1(T, {\rm Ad}\circ \rho)=\dim H^1(T, {\rm Ad}\circ \rho_0)$.
\end{enumerate}
\end{thm}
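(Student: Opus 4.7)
The plan is to combine Weil's theorem on cocycles (Theorem \ref{t:weil}) with a dimension comparison between the irreducible component of $\Hom(T,\uG)$ through $\rho_0$ and the locus of representations factoring through a $\uG$-conjugate of $\uH$, and then use maximality of $\uH$ to rule out other proper subgroups. First I would check that $\Ad\circ\rho_0$ and its dual have no nontrivial $T$-invariants on $\mathfrak{g}$. Compactness of $\uG$ yields a $\uG$-invariant inner product on $\mathfrak{g}$, so $\mathfrak{g}\cong\mathfrak{g}^{*}$ as $T$-modules and $i=i^{*}$, while Zariski density of $\rho_0(T)$ in $\uH$ identifies $\mathfrak{g}^T$ with $\mathfrak{g}^{\uH}$. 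Decomposing $\mathfrak{g}=\mathfrak{h}\oplus\mathfrak{m}$ as an $\uH$-module by complete reducibility, one has $\mathfrak{h}^{\uH}=0$ since $\uH$ is semisimple, and any nonzero $v\in\mathfrak{m}^{\uH}$ would produce a Lie subalgebra $\mathfrak{h}+\R v$ strictly between $\mathfrak{h}$ and $\mathfrak{g}$, contradicting the maximality of $\uH$. With $i=i^{*}=0$, Theorem \ref{t:weil} supplies a nonsingular neighborhood of $\rho_0$ in $\Hom(T,\uG)$ of dimension $-\dim\mathfrak{g}+e_x+e_y+e_z$, which by (\ref{e:dimpone}) equals $\dim H^1(T,\mathfrak{g})+\dim\uG$; this gives (i) and the dimension claim of (ii), where $\CZ$ denotes the irreducible component through $\rho_0$.

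For the existence of a Zariski dense $\rho\in\CZ$, I would estimate the locus $\mathcal{L}_{[\uH]}:=\{\rho\in\Hom(T,\uG):\rho(T)\subseteq g\uH g^{-1}\text{ for some }g\in\uG\}$ as the image of the conjugation map $\uG\times\Hom(T,\uH)\to\Hom(T,\uG)$. The same Weil computation inside $\uH$ gives $\dim\Hom(T,\uH)=\dim H^1(T,\mathfrak{h})+\dim\uH$ near $\rho_0$; because $\mathfrak{g}^{\uH}=0$ forces $C_\uG(\uH)^0=\{1\}$, one has $N_\uG(\uH)^0=\uH$, so the generic fiber is a coset of $N_\uG(\uH)$ of dimension $\dim\uH$. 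Hence the closure of $\mathcal{L}_{[\uH]}$ has dimension $\dim\uG+\dim H^1(T,\mathfrak{h})$, strictly less than $\dim\CZ$ by hypothesis (c), so $\mathcal{L}_{[\uH]}\cap\CZ$ is a proper closed subvariety of $\CZ$. For any other conjugacy class $[\uN]$ of maximal closed proper subgroups of $\uG$, maximality of $\uH$ forces $\rho_0\notin\mathcal{L}_{[\uN]}$: otherwise $\uH=\overline{\rho_0(T)}\subseteq g\uN g^{-1}$ would, by properness, give $\uH=g\uN g^{-1}$ and hence $[\uN]=[\uH]$. Since a compact simple Lie group has only finitely many conjugacy classes of maximal closed proper subgroups, the union $\bigcup_{[\uN]}(\mathcal{L}_{[\uN]}\cap\CZ)$ is a finite union of proper closed subvarieties of the irreducible $\CZ$ and is therefore itself a proper closed subvariety; any $\rho$ in its complement has $\overline{\rho(T)}=\uG$, proving (ii).

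Finally, (iii) is an application of Lemma \ref{l:h1equal} to the pair $s_1=\Ad\circ\rho_0$ and $s_2=\Ad\circ\rho$: both lie in a common irreducible component of $\Hom(T,\mathrm{GL}(\mathfrak{g}))$ (the image of $\CZ$ under composition with $\Ad$), and each has vanishing $i$ and $i^{*}$ (for $\rho$, because $\overline{\rho(T)}=\uG$ acts irreducibly and nontrivially on $\mathfrak{g}$). The main obstacle I anticipate is the dimension count governing $\mathcal{L}_{[\uH]}$, together with the verification that $\rho_0$ misses every $\mathcal{L}_{[\uN]}$ with $[\uN]\ne[\uH]$; this is precisely where hypotheses (b) and (c) are indispensable, and it relies on the finite classification of maximal closed subgroups in a compact simple Lie group.
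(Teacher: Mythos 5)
Your argument for parts (i) and (iii) is essentially the same as the paper's: establish $i=i^*=0$ (the paper derives self-duality from the general fact that the adjoint representation in characteristic zero is self-dual, and uses $Z_{\uG}(\uH)=Z(\uH)$ finite; you use the compact invariant inner product and the decomposition $\mathfrak{g}=\mathfrak{h}\oplus\mathfrak{m}$, both routes are fine), then apply Theorem \ref{t:weil} for (i) and Lemma \ref{l:h1equal} for (iii). The genuine divergence is in part (ii): the paper simply invokes \cite[Corollary 2.5]{LL} as a black box, while you attempt to re-derive it from scratch via a stratification of the non-dense locus by conjugacy classes of maximal subgroups. That re-derivation is in the right spirit and is presumably close to what \cite{LL} does, but as written it has gaps.

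The most substantive gap is in your dimension bound for $\mathcal{L}_{[\uH]}\cap\CZ$. You compute $\dim\Hom(T,\uH)=\dim\uH+\dim H^1(T,\mathfrak{h})$ \emph{near $\rho_0$}, i.e.\ for the component of $\Hom(T,\uH)$ through $\rho_0$, and then assert that the closure of all of $\mathcal{L}_{[\uH]}$ has dimension $\dim\uG+\dim H^1(T,\mathfrak{h})$. But $\mathcal{L}_{[\uH]}$ is the image of $\uG\times\Hom(T,\uH)$, and other irreducible components of $\Hom(T,\uH)$ could a priori be larger and contribute higher-dimensional pieces meeting $\CZ$. To close this one must show that if $\CZ$ is dominated by $\uG\times W$ for some component $W$ of $\Hom(T,\uH)$, then (using compactness of $\uG(\R)$ to get Euclidean-closedness of the conjugation image, hence Zariski-closedness since the image is constructible) $\rho_0$ actually lies in $\uG\cdot W$, forcing $W$ to contain $g^{-1}\rho_0 g$ for some $g\in N_{\uG}(\uH)$ and hence $W$ to have the same local dimension as the component through $\rho_0$. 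A related omission: you need each $\mathcal{L}_{[\uN]}\cap\CZ$ to be Zariski-closed before the ``finite union of proper closed subvarieties'' argument applies; this again requires the compactness-plus-constructibility argument, which you do not state. Finally, the finiteness of the set of conjugacy classes of maximal closed (equivalently algebraic) proper subgroups of a compact simple Lie group is a nontrivial classification input that is silently invoked. None of these is a fatal defect of the strategy, but as a proof they leave real work undone that the paper sidesteps by citing \cite[Corollary 2.5]{LL}.
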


\begin{proof}
Since the Zariski closure $\uH$ of $\rho_0(T)$ is a maximal subgroup of $\uG$ we have  $Z_{\uG}(\uH)=Z(\uH)$, which is finite since $\uH$ is semisimple. Hence ${\rm Ad}\circ \rho_0$ has no invariants on $\mathfrak{g}$. Since the adjoint representation of a simple group in characteristic zero is self-dual, $({\rm Ad} \circ \rho_0)^*$ has no invariants on $\mathfrak{g}^*$. In particular, Theorem \ref{t:weil} now yields the first part.
Moreover, it now follows from (\ref{e:dimptildeone}) and (\ref{e:dimpone})
that $$ \dim Z^1(T,\mathfrak{h})-\dim \uH=\dim H^1(T,\mathfrak{h}) \quad {\rm and} \quad \dim Z^1(T,\mathfrak{g})-\dim \uG=\dim H^1(T,\mathfrak{g})$$ and  \cite[Corollary 2.5]{LL} now yields the second part.
Let us now consider the final part. Being simple, $\uG$ has finite center. Since  $\rho: T\rightarrow \uG(\R)$ is a Zariski dense representation, it follows that ${\rm Ad}\circ \rho$ has no invariants on $\mathfrak{g}$. The same  argument as the one given above for $(\rm Ad\circ \rho_0)^*$ yields that $({\rm Ad}\circ \rho)^*$ has no invariants on $\mathfrak{g}^*$. Since $\rho_0$ and $\rho$ lie in a common irreducible component of $\Hom(T,\uG)$,   the result now follows from Lemma \ref{l:h1equal}.
\end{proof}

We will start with $\rho_0: T\rightarrow \uG(\R)$ induced from the principal homomorphism  ${\rm SO}(3) \rightarrow \uG$, where $$T \not \in S=\{T_{2,4,6},T_{2,6,6},T_{2,6,10},T_{3,4,4},T_{3,6,6},T_{4,6,12}\}$$ so that $T$ is ${\rm SO}(3)$-dense (see (\ref{e:trianglenotso3dense})). In the notation of Theorem \ref{t:ll}, $\uH \cong {\rm SO}(3)$ and condition (a) is satisfied. In many cases, condition (b) is also satisfied by Dynkin's classification of maximal subalgebras of simple Lie algebras (see \cite{D2,D3}).
To avoid confusion we will sometimes write
$\rho_0^{\uG}$ instead of $\rho_0$.
If $\mathfrak{g}$ denotes the Lie algebra of $\uG$, we will for conciseness  write $\mathfrak{g}$ for $({\rm Ad} \circ \rho_0^{\uG}\vert_{\mathfrak{g}})$, i.e. the action of $T$ on $\mathfrak{g}$ via ${\rm Ad}\circ\rho_0^{\uG}$.
%More generally, if $\rho: T \rightarrow \uG$ is a representation, we will sometimes write ${\rm Ad}\circ \rho \vert_{\uG}$ to denote the action of $\rho$ on the Lie algebra of $\uG$ via its adjoint representation.

\begin{prop}\label{p:dynkin}
Let $\uG$ be a compact adjoint simple group over $\mathbb{R}$ of type  $A_2$, $B_n$ ($n \geq 4$), $C_n$ ($n \geq 2$), $E_7$, $E_8$, $F_4$ and $G_2$. Then the image of a principal homomorphism from ${\rm SO}(3)$ into $\uG$ is a maximal subgroup.
\end{prop}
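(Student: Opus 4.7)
The plan is to reduce the statement to the classical question of when the principal $\mathfrak{sl}_2$ is a maximal subalgebra of a complex simple Lie algebra, and then invoke Dynkin's theorems \cite{D2,D3}.

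First I would reduce to complex Lie algebras. Since $\uG$ is compact and connected, closed connected proper subgroups of $\uG$ correspond bijectively to proper Lie subalgebras of the compact real form $\mathfrak{g}$. Moreover, real subalgebras of a compact form correspond to conjugation-stable complex subalgebras of $\mathfrak{g}_{\C}$; given any intermediate complex subalgebra $\mathfrak{k}$ strictly between $\mathfrak{h}_{\C}$ and $\mathfrak{g}_{\C}$, the subalgebras $\mathfrak{k}\cap\bar{\mathfrak{k}}$ and $\mathfrak{k}+\bar{\mathfrak{k}}$ still lie strictly between $\mathfrak{h}_{\C}$ and $\mathfrak{g}_{\C}$ (at least one of the two inclusions is strict), and both are defined over $\R$. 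Hence it suffices to show that the principal $\mathfrak{sl}_2$ is a maximal subalgebra of the complex simple Lie algebra $\mathfrak{g}_{\C}$ of type $X$.

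Second, I would apply Dynkin's classification. Dynkin partitioned the maximal subalgebras of a complex simple Lie algebra into regular subalgebras (those normalized by a Cartan subalgebra) and S-subalgebras. The principal $\mathfrak{sl}_2$ contains a regular nilpotent element, so any reductive subalgebra containing it must have full rank, and hence any containing regular subalgebra must be all of $\mathfrak{g}_{\C}$; in particular the principal $\mathfrak{sl}_2$ is itself an S-subalgebra. Dynkin's explicit tables of S-subalgebras then identify precisely the simple algebras for which the principal $A_1$ is maximal, and the list in the proposition matches this one. For a sanity check, the excluded types fail visibly: for $A_n$ with $n\geq 3$ the principal $A_1$ acts irreducibly as $V_n$ on the standard representation, which is self-dual and so factors through $\mathfrak{so}_{n+1}$ or $\mathfrak{sp}_{n+1}$; for $D_n$ with $n\geq 4$ the standard representation restricted to the principal $A_1$ decomposes as $V_{2n-2}\oplus V_0$, so the principal $A_1$ lies in $B_{n-1}$; for $B_3$ the principal $A_1$ factors through the irreducible $7$-dimensional representation of $G_2\subset B_3$; for $E_6$ it is contained in the maximal $F_4$-subalgebra.

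The main obstacle is the exceptional affirmative cases $F_4$, $E_7$, $E_8$: here no classical-representation-theoretic shortcut is available, and one must rely on Dynkin's case-by-case analysis of S-subalgebras. Concretely, using the decomposition of the adjoint representation as $\bigoplus_{j=1}^r V_{2e_j}$ under the principal $\mathfrak{sl}_2$ and the explicit list of conjugacy classes of simple Lie subalgebras classified by Dynkin via their characteristics (embedding indices), one checks that the principal $A_1$ is not contained in any of the candidate S-subalgebras, completing the proof.
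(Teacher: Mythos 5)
Your proof takes essentially the same route as the paper: the paper records Proposition \ref{p:dynkin} without proof, relying on the sentence just before it citing Dynkin's classification of maximal subalgebras \cite{D2,D3}, and your argument is a more detailed version of the same citation together with the standard reduction to maximality of the principal $\mathfrak{sl}_2$ in $\mathfrak{g}_{\C}$ (which, in the direction you actually need, is immediate by complexifying any putative intermediate real subalgebra, so the $\mathfrak{k}\cap\bar{\mathfrak{k}}$ and $\mathfrak{k}+\bar{\mathfrak{k}}$ maneuver is unnecessary, and indeed $\mathfrak{k}+\bar{\mathfrak{k}}$ need not be a subalgebra).

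One step of your elaboration is misstated: ``any reductive subalgebra containing [a regular nilpotent] must have full rank'' is false as written --- the principal $\mathfrak{sl}_2$ itself, or $\mathfrak{g}_2\subset\mathfrak{so}_7$, are reductive subalgebras containing a regular nilpotent of the ambient algebra but having rank strictly smaller than the ambient rank. What you actually need is that no \emph{proper regular subalgebra} of $\mathfrak{g}_{\C}$ (in Dynkin's sense) can contain the principal $\mathfrak{sl}_2$: any maximal regular subalgebra $\mathfrak{k}$ containing the triple $(e,h,f)$ contains the regular semisimple $h$ and hence the unique Cartan $Z_{\mathfrak{g}}(h)$, so containing $e=\sum_{\alpha}e_{\alpha}$ and $f$ forces $\mathfrak{k}$ to contain every simple root space of $\pm\alpha$, whence $\mathfrak{k}=\mathfrak{g}_{\C}$. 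With that corrected, the identification of the principal $\mathfrak{sl}_2$ as an S-subalgebra stands, and the appeal to Dynkin's tables yields exactly the list in the proposition, matching what the paper uses.
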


Note that in the case where $\uG$ is of type $A_1$ (i.e. $\uG={\rm SO}(3)$), we saw that the  representation to ${\rm SO}(3)$ is locally rigid and indeed in this case we have: for any hyperbolic triangle group $T=T_{a,b,c}$ and any fixed prime $p$, $T$ has only finitely many quotients of the form ${\rm PSL}_2(p^\ell)$. On the other hand:

\begin{thm}\label{t:exrep}
Let $T=T_{a,b,c}$ be a hyperbolic triangle group not in $S$ (see (\ref{e:trianglenotso3dense}))
%$$S=\{T_{2,4,6},T_{2,6,6},T_{2,6,10},T_{3,4,4},T_{3,6,6},T_{4,6,12}\}$$
 and $\uG$ be a compact, adjoint, real simple  group of one of the following types:
$$A_2,\  B_n (n\geq 4), \ C_n (n \geq 2), \ E_7, E_8, F_4, G_2.$$
Let $\rho_0: T \rightarrow  {\rm SO}(3,\R) \rightarrow \uG(\R)$ be the representation of $T$ induced from the principal homomorphism  ${\rm SO}(3) \rightarrow \uG$.
Unless  $(T,X)$ is as in Table \ref{tab:exrep} (see \S\ref{s:tab}), there exists  a nonsingular $\R$-point $\rho_1\in \Hom(T,\uG)$, with Zariski dense image, which belongs to the same  irreducible component of $\Hom(T,\uG)$ as $\rho_0$  and
which satisfies
$$\dim H^1(T, {\rm Ad}\circ \rho_1\vert_{\mathfrak{g}})=\dim H^1(T, \mathfrak{g})>0.$$
In particular, $T$ is saturated with finite  quotients of type $X$,  unless $(T,X)$ is as  in Table \ref{tab:exrep}.
\end{thm}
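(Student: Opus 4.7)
The plan is to apply Theorem \ref{t:ll} directly to the homomorphism $\rho_0$, with $\uH$ the image of the principal homomorphism ${\rm SO}(3) \to \uG$, and then invoke Theorem \ref{t:key} to pass from ``not locally rigid with Zariski dense image'' to ``saturated with finite quotients of type $X$.'' Thus the first task is to verify the three hypotheses of Theorem \ref{t:ll}. Condition (a) is immediate since $\uH \cong {\rm SO}(3)$ is semisimple and connected. Condition (b), that $\uH$ is a maximal proper subgroup of $\uG$, is precisely the content of Proposition \ref{p:dynkin} for each of the listed types $A_2$, $B_n$ $(n\geq 4)$, $C_n$ $(n\geq 2)$, $E_7$, $E_8$, $F_4$, $G_2$ (this is the reason for the restriction on $X$ in the statement).

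The substantive work is condition (c), namely $\dim H^1(T,\mathfrak{h}) < \dim H^1(T,\mathfrak{g})$. For the left-hand side, $T$ acts on $\mathfrak{h}\cong\mathfrak{so}(3)$ via the ${\rm SO}(3)$-dense representation guaranteed by the assumption $T\notin S$; the three-dimensional adjoint computation carried out in Section \ref{s:dtfg} (immediately before Lemma \ref{l:h1equal}) gives $\dim H^1(T,\mathfrak{h})=0$. For the right-hand side, Proposition \ref{p:dimpone} supplies the closed-form expression
\[
\dim H^1(T,\mathfrak{g}) \;=\; \dim \uG \;-\; \sum_{k=1}^3\sum_{j=1}^r\Bigl(1+2\bigl\lfloor e_j/n_k\bigr\rfloor\Bigr),
\]
and Lemma \ref{l:ineq} tells us exactly when this quantity is strictly positive: it fails only in the six explicit exceptional cases (a)--(f). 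Since the statement of Theorem \ref{t:exrep} already restricts to $X\in\{A_2, B_n, C_n, E_7, E_8, F_4, G_2\}$, only cases (b), (e), (f) of Lemma \ref{l:ineq} are relevant. These are precisely the entries that make up Table \ref{tab:exrep}: namely $X=A_2$ with $a=2$; $X=C_2$ with $b=3$; and $X=G_2$ with $(a,c)=(2,5)$.

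Outside Table \ref{tab:exrep} we therefore have condition (c), so Theorem \ref{t:ll} applies. Parts (i) and (ii) yield an irreducible component of $\Hom(T,\uG)$ containing $\rho_0$ as a nonsingular point, together with a nonsingular point $\rho_1$ in the same component with Zariski dense image; part (iii) gives $\dim H^1(T,{\rm Ad}\circ\rho_1|_{\mathfrak{g}}) = \dim H^1(T,\mathfrak{g}) > 0$, which is the cohomological conclusion. For the final sentence on saturation, complexify and view $\rho_1 \in \Hom(T,\uG(\mathbb{C}))$. Its image is still Zariski dense, and by Weil's criterion (the converse to Theorem \ref{t:locrig}, as recorded in the remark after Theorem \ref{t:key}) the nonvanishing of $H^1(T,{\rm Ad}\circ\rho_1)$ means $\rho_1$ is not locally rigid. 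Condition $(1')$ of that remark is therefore satisfied, and Theorem \ref{t:key} gives that $T$ is saturated with finite quotients of type $X$.

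The bulk of the conceptual content is already packaged into Theorems \ref{t:ll} and \ref{t:key}; the main obstacle is the bookkeeping for condition (c), which is handled by the floor-function inequality of Lemma \ref{l:ineq}. Reassuringly, the exceptional triples that survive coincide exactly with the rigid pairs classified in the proposition preceding this section (for the types under consideration), so the list of excluded $(T,X)$ in Table \ref{tab:exrep} matches what rigidity theory predicts must be excluded. The argument is thus essentially a clean composition of: the principal homomorphism (supplying $\rho_0$), Dynkin's maximality result (giving (b)), Weil's cocycle count (identifying (c) with an explicit inequality), and the strong-approximation--based Theorem \ref{t:key} (converting deformability into saturation).
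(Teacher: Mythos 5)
Your proposal is correct and follows essentially the same route as the paper: verify the hypotheses of Theorem \ref{t:ll} (with $\uH={\rm SO}(3)$, using Proposition \ref{p:dynkin} for maximality and Lemma \ref{l:ineq} for the cohomology inequality, whose exceptional cases for the stated types are exactly Table \ref{tab:exrep}), then feed the resulting non--locally-rigid Zariski dense representation into Theorem \ref{t:key}. The only cosmetic difference is that you justify $\dim H^1(T,\mathfrak{h})=0$ via the direct three-dimensional cocycle count in \S\ref{s:dtfg}, whereas the paper cites Lemma \ref{l:ineq} (case $X=A_1$) for both sides of the inequality; these are the same computation.
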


\begin{rmk}\label{rmk:exrep}
Given $X$, the triples $(a,b,c)$ appearing in Table \ref{tab:exrep} appear also as  rigid triples  (in Marion's sense) for an adjoint algebraic group $\uG$ of type $X$, over an algebraically closed field of prime characteristic $p$, for every prime $p$.
\end{rmk}

\begin{proof}
 Let $\uH$ be the Zariski closure of $\rho_0(T)$. Then $\uH={\rm SO}(3)$ is simple and connected, and by Proposition \ref{p:dynkin}, $\uH$ is a maximal subgroup of $\uG$. Clearly $\rho_0^{\uG}=\rho_0^{\uH}$ (indeed here $\uH={\rm SO}(3)$), and by  Lemma \ref{l:ineq}, $\dim H^1(T,\mathfrak{h})=0$ and $\dim H^1(T,\mathfrak{g})>0$ unless $(T,X)$ is as in Table \ref{tab:exrep}. The result now follows immediately from  Theorems \ref{t:ll} and \ref{t:key}.
\end{proof}

Most of the cases where $\uG$ is of type $A_r$ or $D_r$ can also be treated using Theorem \ref{t:ll},  not directly from the principal homomorphism from ${\rm SO}(3)$ but rather via a ``two-step ladder". More precisely:

\begin{thm}\label{t:twosteps}
Let $T=T_{a,b,c}$ be a hyperbolic triangle group not in $S$ (see (\ref{e:trianglenotso3dense})) and $\uG$ be a compact, adjoint, real simple group of type $X=A_r$ (with $r\geq 3$ and $r\neq 6$), or $X=D_r$  (with $r \geq 5$). Let $\uH$ be a closed subgroup of $\uG$ of type $Y$, as follows:
\begin{enumerate}[(a)]
\item If $X=A_r$, let $Y=B_{r/2}$ if $r$ is even, otherwise $Y=C_{(r+1)/2}$.
\item If $X=D_r$, let $Y=B_{r-1}$.
\end{enumerate}
Let $\rho_1: T\rightarrow \uH(\R)$ be the nonsingular, Zariski dense representation provided by Theorem \ref{t:exrep} (excluding the cases of Table \ref{tab:exrep}). Then the following assertions hold:
\begin{enumerate}[(i)]
 \item If $\dim H^1(T,\mathfrak{h}) < \dim H^1(T,\mathfrak{g})$ then there exists a nonsingular representation  $\rho_2: T \rightarrow \uG(\R)$ in the  irreducible component of $\Hom(T,\uG)$ containing $\rho_1$, with Zariski dense image and satisfying
$$\dim H^1(T,{\rm Ad} \circ \rho_2\vert_{\mathfrak{g}})=\dim H^1(T,{\rm Ad}\circ \rho_1\vert_{\mathfrak{g}})=\dim H^1(T,\mathfrak{g})>0.$$
\item The cases for which $\dim H^1(T,\mathfrak{h})=\dim H^1(T,\mathfrak{g})$ are described in Table \ref{tab:twosteps} (see \S\ref{s:tab}).
\item In particular, $T$ is saturated with finite  quotients of type $X$, except possibly if $X\in\{(A_3,A_4)\}$ and $(a,b)\in\{(2,3),(3,3)\}$  or $(T,X)$  appears in Table  \ref{tab:twosteps}.
\end{enumerate}
\end{thm}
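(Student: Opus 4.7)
\textbf{Proof plan for Theorem \ref{t:twosteps}.}
The strategy is to apply Theorem \ref{t:ll} a second time, now taking the representation $\rho_1\colon T\to \uH(\R)$ produced by Theorem \ref{t:exrep} as the starting point $\rho_0$ of Theorem \ref{t:ll}, with the maximal subgroup pair $(\uH,\uG)$ playing the role of $(\uH,\uG)$ there. The three hypotheses of Theorem \ref{t:ll} to be checked are therefore: (a) $\uH$ is semisimple and connected, (b) $\uH$ is maximal in $\uG$, and (c) $\dim H^1(T,\mathfrak{h})<\dim H^1(T,\mathfrak{g})$.

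First I would verify (a) and (b). Assertion (a) is immediate since in each case $\uH$ is a compact form of the simple group $B_{r/2}$, $C_{(r+1)/2}$, or $B_{r-1}$. For (b) I would appeal to Dynkin's classification of maximal connected subgroups of compact simple Lie groups \cite{D2,D3}: the embeddings $C_{(r+1)/2}\subset A_r$ (for $r$ odd), $B_{r/2}\subset A_r$ (for $r$ even), and $B_{r-1}\subset D_r$ correspond respectively to the standard inclusions $\mathrm{Sp}((r+1)/2)\subset \mathrm{SU}(r+1)$, $\mathrm{SO}(r+1)\subset \mathrm{SU}(r+1)$, and $\mathrm{SO}(2r-1)\subset \mathrm{SO}(2r)$, each of which is maximal (with the exclusion $r\neq 6$ coming from the extra inclusion $G_2\subset B_3\subset A_6$ that makes $B_3$ non-maximal in $A_6$).

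Next, assuming (c), which is precisely the hypothesis of part (i), Theorem \ref{t:ll} applied to the pair $(\uH,\uG)$ and to $\rho_0:=\rho_1$ produces the desired nonsingular representation $\rho_2\colon T\to \uG(\R)$ with Zariski dense image, lying in the same irreducible component of $\Hom(T,\uG)$ as $\rho_1$, and satisfying $\dim H^1(T,\Ad\circ\rho_2|_{\mathfrak{g}})=\dim H^1(T,\Ad\circ\rho_1|_{\mathfrak{g}})$. The remaining equality with $\dim H^1(T,\mathfrak{g})$ (in the sense of the convention that $\mathfrak{g}$ denotes the $T$-module obtained from the principal homomorphism into $\uG$) follows from Lemma \ref{l:h1equal} together with the fact that the inclusion $\rho_0^{\uH}\hookrightarrow \uG$ lies in the same irreducible component of $\Hom(T,\uG)$ as $\rho_0^{\uG}$; this is because the restriction of the adjoint representation of $\uG$ to the principal $\mathrm{SL}_2$ depends only on the exponents of $\uG$, so that the characters of $\rho_0(x),\rho_0(y),\rho_0(z)$ coincide whether computed through $\uH$ or directly. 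This proves (i).

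For (ii) I would compute both $\dim H^1(T,\mathfrak{h})$ and $\dim H^1(T,\mathfrak{g})$ using Proposition \ref{p:dimpone}, applied to $\uH$ and $\uG$ respectively, and then read off the finite list of triples $(a,b,c)$ for which equality occurs. The $\uG$-side is just the formula $\dim\uG-\sum_{k,j}(1+2\lfloor e_j/n_k\rfloor)$ in terms of the exponents of $\uG$; the $\uH$-side requires the analogous formula using the exponents of $\uH$ together with the branching of the principal $\mathrm{SO}(3)$-action from $\uG$ to $\uH$ (which sends the principal $\mathrm{SO}(3)$ of $\uG$ to the principal $\mathrm{SO}(3)$ of $\uH$, so the exponents of $\uH$ do appear). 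Tabulating the difference $\dim H^1(T,\mathfrak{g})-\dim H^1(T,\mathfrak{h})$ over the finitely many $(a,b,c)$ with $\mu<1$ that survive the monotonicity argument of Lemma \ref{l:ineq} is a bookkeeping exercise and yields Table \ref{tab:twosteps}. This combinatorial classification is the main obstacle, since it is elementary but requires a careful case analysis over the families $A_r$, $D_r$ and the relevant exponent data.

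Finally, for (iii), combine (i) with Theorem \ref{t:key}: whenever $\rho_1$ exists (i.e. $(T,Y)$ is not excluded by Theorem \ref{t:exrep}, which translates, via $Y=C_2$ or $Y=B_2=C_2$, into the excluded pairs $X\in\{A_3,A_4\}$ with $(a,b)\in\{(2,3),(3,3)\}$) and the cohomology inequality (c) holds (i.e.\ $(T,X)$ is not in Table \ref{tab:twosteps}), the non-locally-rigid Zariski dense representation $\rho_2\colon T\to X(\C)$ obtained in (i) verifies condition (1) of Theorem \ref{t:key}, so $T$ is saturated with finite quotients of type $X$. \hfill$\square$
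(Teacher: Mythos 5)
Your proposal follows essentially the same route as the paper: apply Theorem \ref{t:ll} a second time with $\rho_1$ as the starting point, use the fact that the principal $\mathrm{SO}(3)$ of $\uH$ coincides with the principal $\mathrm{SO}(3)$ of $\uG$ (the paper cites \cite[Theorem B]{SS} for this), invoke Lemma \ref{l:h1equal} to transport $\dim H^1$, and reduce part (ii) to a combinatorial inequality over exponents.

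Two inaccuracies are worth flagging. First, your parenthetical claim that the exclusion $r\neq 6$ comes from $B_3$ being non-maximal in $A_6$ via $G_2\subset B_3\subset A_6$ is false: $\mathrm{SO}(7)$ \emph{is} maximal in $\mathrm{SU}(7)$, and the chain $G_2\subset B_3$ sits strictly below $B_3$, not between $B_3$ and $A_6$. The actual reason $r=6$ is excluded (and likewise $D_4$) is that the required intermediate group $\uH$ has type $B_3$, which is not among the types covered by Theorem \ref{t:exrep}: the principal $\mathrm{SO}(3)$ in $B_3$ is not maximal (it factors through $G_2\subset B_3$), so no nonsingular Zariski dense $\rho_1$ into a compact $B_3$ is provided at the second step. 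This is exactly why the paper handles $A_6$ and $D_4$ separately via the three-step ladder of Theorem \ref{t:threestepssmall}. Second, your justification that $\rho_0^{\uH}$ and $\rho_0^{\uG}$ lie in a common component of $\Hom(T,\uG)$ ``because the characters coincide'' is not, by itself, a reason for being in the same irreducible component; the real point, per Saxl--Seitz, is that the principal $\mathrm{SO}(3)$ of $\uH$ \emph{is} the principal $\mathrm{SO}(3)$ of $\uG$, so the two compositions $T\to\mathrm{SO}(3)\to\uG$ are the same map, hence trivially in the same component. Beyond those points, your sketch for (ii) is correct but elides the bulk of the work: the paper's Claim \ref{c:cla} sets up the key inequality $\sum_{k,e\in E}\lfloor e/n_k\rfloor < \sum_{e\in E}e - |E|$ (where $E$ is the set of exponents of $\uG$ not exponents of $\uH$), proves monotonicity in $r$ for $A_r$, bounds $r\leq 43$ for $D_r$, and then runs a case analysis over the poset of hyperbolic triples starting from the minimal triples $(2,3,7)$, $(2,4,5)$, $(3,3,4)$. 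That is the nontrivial quantitative content producing Table \ref{tab:twosteps}, and a complete proof would have to carry it out.
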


\begin{rmk}
For the moment, we exclude the cases $X=A_6$ and $X=D_4$, as  these cases require $Y=B_3$, which is not covered by Theorem \ref{t:exrep}.
\end{rmk}

\begin{proof}
%Let us recall that $\uG$ and $\uH$ are compact real forms of complex algebraic groups $\uG_{\C}$ and $\uH_{\C}$, respectively.
%Computation of the quotient space of 1-cocycles by 1-coboundaries is the same for $G$ and $\uG$ (or for $H$ and $\uH$), and so we can work in the category  of algebraic groups.
%Now $\uH$ is the Zariski closure of $\rho_1(T)$.
 Note that $\rho_0^{\uG}=\rho_0^{\uH}$ (see \cite[Theorem B]{SS}) and so, by Theorem \ref{t:exrep}, $\rho_1$ and $\rho_0^{\uG}$ are in a common irreducible component of $\Hom(T,\uH)$. Hence $\rho_1$ and $\rho_0^{\uG}$ are in a common irreducible component of $\Hom(T,\uG)$. Furthermore, $\uH$ is semisimple and connected as well as maximal in $\uG$.
Hence Theorem \ref{t:ll} yields the existence of a nonsingular representation $\rho_2: T \rightarrow \uG(\R)$  in the same irreducible component of $\Hom(T,\uG)$ as $\rho_1$, with Zariski dense image and  satisfying $$\dim H^1(T,{\rm Ad} \circ \rho_2\vert_{\mathfrak{g}})=\dim H^1(T,{\rm Ad}\circ \rho_1\vert_{\mathfrak{g}}),$$
provided that $\dim H^1(T,{\rm Ad}\circ \rho_1\vert_\mathfrak{h})<\dim H^1(T,{\rm Ad}\circ \rho_1\vert_\mathfrak{g})$. Now by Theorem \ref{t:exrep},
$$\dim H^1(T,{\rm Ad}\circ \rho_1\vert_\mathfrak{h})=\dim H^1(T,{\rm Ad}\circ \rho_0^{\uH})=\dim H^1(T,\mathfrak{h}).$$
Also ${\rm Ad}\circ \rho_1$ has no invariants on $\mathfrak{g}$ (since $\overline{\rho_1(T)}=\uH$ is a maximal subgroup with finite center of the simple group $\uG$) and  ${\rm Ad\circ \rho_0^{\uG}}$ has no invariants on $\mathfrak{g}$ (since $\rho_0^{\uG}$ is the representation induced from the principal homomorphism). Furthermore, as the adjoint representation of a simple group in characteristic zero is self-dual, $({\rm Ad}\circ \rho_1)^*$ and $({\rm Ad}\circ \rho_0^{\uG})^*$ have no invariants on $\mathfrak{g}^*$. Since $\rho_1$ and  $\rho_0^{\uG}$ lie in a common irreducible component of $\Hom(T,\uG)$, Lemma \ref{l:h1equal} now yields
$$ \dim H^1(T,{\rm Ad}\circ \rho_1\vert_{\mathfrak{g}})=\dim H^1(T,{\rm Ad}\circ \rho_0^{\uG}\vert_{\mathfrak{g}})=\dim H^1(T,\mathfrak{g}).$$
We therefore need to prove that except for $T$ and $X$ as in Table \ref{tab:twosteps}, we have
\begin{equation}\label{e:ineqclaim} \dim H^1(T,\mathfrak{g})>\dim H^1(T,\mathfrak{h}),
\end{equation}
 and then the result  will follow from Theorem \ref{t:key}.

The following line of argument will also be repeated  in the proofs of Theorems \ref{t:twostepssmall} and \ref{t:threestepssmall}, so let us isolate it here:
\begin{cla}\label{c:cla}
Observe that as $\uH$ is a subgroup of $\uG$, if inequality (\ref{e:ineqclaim}) does not hold, then equality holds instead.   For convenience, we let $n_1=a$, $n_2=b$ and $n_3=c$.
Noting, by (\ref{e:exp}), that the exponents of $\uH$ form a subset of the set of exponents of $\uG$, we let $E$ be the set of exponents of $\uG$ which are not exponents of $\uH$.    It  follows from (\ref{e:dimge}) that
$$\dim \uG-\dim \uH= \sum_{e \in E}(1+2e).$$ Proposition \ref{p:dimpone} now yields
$$\dim H^1(T, \mathfrak{g})-\dim H^1(T,\mathfrak{h})=\sum_{e \in E}(1+2e)-\sum_{k=1}^3\sum_{e \in E}\left(1+2\left\lfloor \frac{e}{n_k} \right\rfloor \right).$$ In particular, $\dim H^1(T, \mathfrak{g})>\dim H^1(T,\mathfrak{h})$ if and only if
\begin{equation}\label{e:crucial}
\sum_{k=1}^3\sum_{e \in E} \left \lfloor  \frac{e}{n_k}\right \rfloor< \sum_{e \in E}e-|E|.
\end{equation}
We let $L_{(n_1,n_2,n_3),\uH,\uG}$ and $R_{\uH,\uG}$ denote respectively the LHS and the RHS of (\ref{e:crucial}).
In order to check whether  inequality (\ref{e:crucial}) holds or not, it is useful to put a partial order on the set of  hyperbolic triples as follows. Given two hyperbolic triples $(n_1,n_2,n_3)$ and $(n_1',n_2',n_3')$,  we say that $(n_1,n_2,n_3) \leq (n_1',n_2',n_3')$ if and only if $n_1 \leq n_1'$, $n_2\leq n_2'$ and $n_3 \leq n_3'$.   We note that  $L_{(n_1,n_2,n_3),\uH,\uG}$ decreases with respect to this partial order. Thus for a fixed pair $(\uG,\uH)$, if (\ref{e:crucial}) holds for a triple $(n_1,n_2,n_3)$, it holds also for every greater triple.
\end{cla}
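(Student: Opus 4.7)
The plan is to establish the three distinct assertions packaged inside Claim \ref{c:cla}: (i) the inequality $\dim H^1(T,\mathfrak{g}) \geq \dim H^1(T,\mathfrak{h})$ (so that the dichotomy ``strict inequality versus equality'' is meaningful); (ii) the explicit formula for the dimension difference $\dim H^1(T,\mathfrak{g})-\dim H^1(T,\mathfrak{h})$ leading to (\ref{e:crucial}); and (iii) the monotonicity of $L_{(n_1,n_2,n_3),\uH,\uG}$ with respect to the partial order on triples. Each of these is a direct consequence of tools already available in the excerpt, so the argument will be short but requires some bookkeeping.

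For (i), since we are working in characteristic zero and both $\uG$ and $\uH$ are semisimple, I would take the Killing-form orthogonal complement of $\mathfrak{h}$ inside $\mathfrak{g}$ to obtain an $\mathrm{Ad}(\uH)$-stable complement $\mathfrak{m}$, yielding a $T$-module decomposition $\mathfrak{g} = \mathfrak{h} \oplus \mathfrak{m}$ (since $T$ acts on $\mathfrak{g}$ through $\uH$). Taking cohomology gives $H^1(T,\mathfrak{g}) = H^1(T,\mathfrak{h}) \oplus H^1(T,\mathfrak{m})$, from which the inequality is immediate.

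For (ii), the crucial point is that the principal homomorphism $\mathrm{SO}(3) \to \uG$ factors through $\mathrm{SO}(3) \to \uH \to \uG$ in each of the cases (a)--(b) of Theorem \ref{t:twosteps} by \cite{SS}, so Proposition \ref{p:dimpone} applies uniformly to both groups with compatible $T$-actions. Using the exponent lists in (\ref{e:exp}), I would verify the required inclusion of exponents case by case: $\{1,3,\ldots,r-1\}\subset\{1,\ldots,r\}$ for $B_{r/2}\subset A_r$; $\{1,3,\ldots,r\}\subset\{1,\ldots,r\}$ for $C_{(r+1)/2}\subset A_r$; and $\{1,3,\ldots,2r-3\}\subset\{1,3,\ldots,2r-3\}\cup\{r-1\}$ for $B_{r-1}\subset D_r$. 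Letting $E$ denote the complement, (\ref{e:dimge}) yields $\dim \uG - \dim \uH = \sum_{e\in E}(1+2e)$, and subtracting the two instances of Proposition \ref{p:dimpone} produces the dimension-difference formula claimed; elementary rearrangement turns positivity of the difference into (\ref{e:crucial}). For (iii), for each fixed $e\geq 0$ the map $n \mapsto \lfloor e/n\rfloor$ is weakly decreasing, so $L_{(n_1,n_2,n_3),\uH,\uG}$ is non-increasing in the partial order while $R_{\uH,\uG}$ is independent of the triple; the transfer statement follows at once. The main obstacle is really just the bookkeeping in (ii)---fixing $E$ correctly in each case and invoking Proposition \ref{p:dimpone} on both $\uG$ and $\uH$ with the \emph{same} underlying representation of $T$---but this is routine once the cases (a)--(b) of Theorem \ref{t:twosteps} are fixed.
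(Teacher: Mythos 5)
Your proposal is correct and follows essentially the same route as the paper: apply Proposition~\ref{p:dimpone} to both $\uG$ and $\uH$ with the same underlying $T$-action (justified by $\rho_0^{\uG}=\rho_0^{\uH}$ via \cite{SS}), use the exponent containment from (\ref{e:exp}) and (\ref{e:dimge}) to isolate the difference set $E$, and note that $n\mapsto\lfloor e/n\rfloor$ is non-increasing to get the monotonicity of $L_{(n_1,n_2,n_3),\uH,\uG}$. Your Killing-form decomposition $\mathfrak{g}=\mathfrak{h}\oplus\mathfrak{m}$ as $T$-modules simply makes explicit the brief remark ``as $\uH$ is a subgroup of $\uG$'' that the paper uses to justify the a priori inequality $\dim H^1(T,\mathfrak{g})\geq\dim H^1(T,\mathfrak{h})$; this is a welcome addition but not a departure.
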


Among all hyperbolic triples, exactly three are minimal: $(2,3,7)$, $(2,4,5)$ and $(3,3,4)$.  They are the starting points of this line of argument.

Let us first treat the case where $X=A_r$. We let $s = \lfloor r/2\rfloor$. Note that $E=\{2,4,\dots,2s-2,2s\}$ and $|E|=s$. In particular $R_{\uH,\uG}$ is equal to:
$$R_r=2\sum_{j=1}^s j-s=s(s+1)-s=s^2$$ and $L_{(n_1,n_2,n_3),\uH,\uG}$ is equal to $$L_{r,(n_1,n_2,n_3)}=\sum_{k=1}^3\sum_{j=1}^s \left\lfloor \frac{2j}{n_k}\right\rfloor.$$ Also observe that
$L_{(2s,(n_1,n_2,n_3))}=L_{(2s+1,(n_1,n_2,n_3))}$ and $R_{2s}=R_{2s+1}$.
Given a fixed hyperbolic triple $(n_1,n_2,n_3)$, we first claim that
\begin{equation}\label{e:claim}
L_{(r+2,(n_1,n_2,n_3))}-L_{(r,(n_1,n_2,n_3))} \leq R_{r+2}-R_r.
\end{equation}
Given a fixed hyperbolic triple $(n_1,n_2,n_3)$, this latter claim shows that if inequality (\ref{e:crucial}) holds for some $A_{r'}$ then it holds for all $A_r$, with $r \geq r'$.
Let us prove the claim.  We have $R_{r+2}-R_r=(s+1)^2-s^2=2s+1$ and, letting $\mu=1/n_1+1/n_2+1/n_3$, we get
\begin{eqnarray*}
L_{(r+2,(n_1,n_2,n_3))}-L_{(r,(n_1,n_2,n_3))}& =& \left\lfloor \frac{2(s+1)}{n_1} \right\rfloor+\left\lfloor \frac{2(s+1)}{n_2} \right\rfloor+\left\lfloor \frac{2(s+1)}{n_3} \right\rfloor\\
&\leq & 2\mu(s+1)\\
& \leq& \frac{82}{42}(s+1)
\end{eqnarray*}
as $\mu \leq 41/42$.

Note also that
$$ L_{(r+2,(n_1,n_2,n_3))}-L_{(r,(n_1,n_2,n_3))} \leq \alpha$$
where
\begin{multline*}
\alpha={\rm max}\Bigl(\left\lfloor\frac{2(s+1)}{n_1}\right\rfloor+ \left\lfloor\frac{2(s+1)}{n_2}\right\rfloor+ \left\lfloor\frac{2(s+1)}{n_3}\right\rfloor: \\
	(n_1,n_2,n_3) \in \{(2,3,7),(2,4,5), (3,3,4)\}\Bigr).
\end{multline*}

Suppose $s\geq 20$ (i.e. $r\geq 40$). Then $82(s+1)/42  \leq (2s+1)$ and the claim made in (\ref{e:claim}) follows in this case. For $s<20$ one directly checks that $\alpha\leq 2s+1$, fully establishing  (\ref{e:claim}) and the claim.

We now check whether inequality (\ref{e:crucial}) holds by a case by case analysis using the partial order defined above: we start with the triple $(2,3,7)$ proceeding with triples above it. We then repeat this procedure for the other two minimal triples, namely $(2,4,5)$ and $(3,3,4)$. In each branch of the partial ordered set, we check till we succeed; i.e. once inequality (\ref{e:crucial}) holds for a fixed $X=A_r$ and a given triple $(n_1,n_2,n_3)$, it also holds for any triple above it. We omit the details, which can be laboriously checked.
This finishes the proof of the theorem when $X=A_r$.

Let us now  treat the case where $X=D_r$.  Here $E=\{r-1\}$ and $|E|=1$. In particular $R_{\uH,\uG}$ is equal to $$R_r=r-2$$ and $L_{(n_1,n_2,n_3),\uH,\uG}$ is equal to $$L_r=\left\lfloor \frac{r-1}{n_1} \right\rfloor+ \left\lfloor \frac{r-1}{n_2} \right\rfloor+\left\lfloor \frac{r-1}{n_3} \right\rfloor.$$
One can give the following crude upper bound for $L_r$.
\begin{equation*}%\label{e:cs}
L_r  \leq  \frac{r-1}{n_1}+\frac{r-1}{n_2}+\frac{r-1}{n_3}.
\end{equation*}
or the following one using (\ref{e:hypub})
\begin{equation*}%\label{e:ch}
L_r \leq \frac{41(r-1)}{42}.
\end{equation*}
Since  $41(r-1)/42<r-2$ for $r>43$, we are now reduced to the case where $r \leq 43$. Here again we check inequality (\ref{e:crucial}) for $X=D_r$ ($r \leq 43$) by the same argument as before (i.e. going along the partial ordered set), this boils  down to a tedious finite case by case analysis.
\end{proof}

Here are more cases where a ``two-step ladder" works:

\begin{thm}\label{t:twostepssmall}
Let $T=T_{a,b,c}$ be a hyperbolic triangle group not in $S$ (see \ref{e:trianglenotso3dense})  and $\uG$  be a compact, adjoint, real simple group of type $X=B_3$ or type $X=E_6$.
 Let $\uH$ be the following subgroup of $\uG$ of type $Y$:
\begin{enumerate}[(a)]
\item If $X=B_3$, let $Y=G_2$.
\item If $X=E_6$, let $Y=F_4$.
\end{enumerate}
Let $\rho_1: T\rightarrow \uH(\R)$ be the nonsingular, Zariski dense representation provided by Theorem \ref{t:exrep} (excluding the cases of Table \ref{tab:exrep}). Then the following assertions hold:
\begin{enumerate}[(i)]
 \item If $\dim H^1(T,\mathfrak{h}) < \dim H^1(T,\mathfrak{g})$ then there exists a nonsingular representation $\rho_2: T\rightarrow \uG(\R)$ in the irreducible component of $\Hom(T,\uG)$ containing $\rho_1$, with Zariski dense image and satisfying
$$\dim H^1(T,{\rm Ad} \circ \rho_2\vert_{\mathfrak{g}})=\dim H^1(T,{\rm Ad}\circ \rho_1\vert_{\mathfrak{g}})=\dim H^1(T,\mathfrak{g})>0.$$
\item The cases for which $\dim H^1(T,\mathfrak{h})=\dim H^1(T,\mathfrak{g})$ are described in Table \ref{tab:twostepssmall} (see \S\ref{s:tab}).
\item In particular, $T$ is saturated with finite  quotients of type $X$, except possibly if  $X=B_3$ and $(a,b,c)\in\{(2,4,5),(2,5,5)\}$ or $(a,b)\in\{(2,3),(3,3)\}$, or $X=E_6$ and $$(a,b,c) \in \{(2,3,7),(2,3,8),(2,4,5),(2,4,7),(2,4,8)\}.$$
\end{enumerate}
\end{thm}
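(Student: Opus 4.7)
The proof plan mirrors closely the argument given for Theorem \ref{t:twosteps}, simply substituting the two new embeddings $G_2 \subset B_3$ and $F_4 \subset E_6$ for the embeddings of $B_{r/2}$, $C_{(r+1)/2}$, $B_{r-1}$ used there. I would structure it as follows.

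First, I need to set up the ladder. Theorem \ref{t:exrep} applies to $\uH$ of type $G_2$ or $F_4$: in the $F_4$ case none of the exceptions of Lemma \ref{l:ineq} (a)--(f) occurs, while in the $G_2$ case the only exclusions come from part (f), corresponding precisely to $(2,4,5)$ and $(2,5,5)$, which is why these appear among the exceptions in the statement for $X=B_3$. Thus for $T$ outside these explicit exclusions we obtain a nonsingular, Zariski dense $\rho_1 \in \Hom(T,\uH(\R))$ with $\dim H^1(T,\mathrm{Ad}\circ\rho_1|_{\mathfrak{h}})=\dim H^1(T,\mathfrak{h})$. Next I would invoke \cite[Theorem B]{SS} to guarantee that the principal $\mathrm{SO}(3)$-homomorphism into $\uG$ factors through $\uH$, so $\rho_0^{\uG}=\rho_0^{\uH}$, which places $\rho_1$ and $\rho_0^{\uG}$ in a common irreducible component of $\Hom(T,\uG)$. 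For the maximality and semisimplicity hypotheses of Theorem \ref{t:ll}, I would cite Dynkin's classification \cite{D2,D3}: both $G_2\subset B_3$ (on a $7$-dimensional orthogonal representation) and $F_4\subset E_6$ are maximal connected subgroups.

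Given these ingredients, Theorem \ref{t:ll} delivers the desired $\rho_2$ whenever $\dim H^1(T,\mathfrak{h})<\dim H^1(T,\mathfrak{g})$, establishing assertion (i); Lemma \ref{l:h1equal} (applied as in the proof of Theorem \ref{t:twosteps}, using that the adjoint is self-dual and that $\uH$ is maximal with finite centralizer) then identifies $\dim H^1(T,\mathrm{Ad}\circ\rho_2|_\mathfrak{g})$ with $\dim H^1(T,\mathfrak{g})$. Assertion (iii) follows from (i) via Theorem \ref{t:key}, as soon as (ii) has been verified.

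The remaining task, which is the core combinatorial work, is the inequality
\[
\sum_{k=1}^{3}\sum_{e\in E}\left\lfloor \frac{e}{n_k}\right\rfloor \;<\; \sum_{e\in E} e - |E|,
\]
i.e.\ (\ref{e:crucial}) from Claim \ref{c:cla}, for the sets of ``extra'' exponents
\[
E=\{3\}\ \text{(case } B_3/G_2\text{)},\qquad E=\{4,8\}\ \text{(case } E_6/F_4\text{)},
\]
using the exponents listed in (\ref{e:exp}). I would follow the partial order argument of Theorem \ref{t:twosteps}, starting from the three minimal hyperbolic triples $(2,3,7)$, $(2,4,5)$, $(3,3,4)$ and checking each chain. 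In the $B_3$ case, where $R=2$ and $L=\lfloor 3/a\rfloor+\lfloor 3/b\rfloor+\lfloor 3/c\rfloor$, equality $L=R$ occurs exactly when $(a,b)\in\{(2,3),(3,3)\}$; strict inequality holds for all other hyperbolic triples. In the $E_6$ case, where $R=10$ and $L=\sum_k(\lfloor 4/n_k\rfloor+\lfloor 8/n_k\rfloor)$, a direct check shows equality exactly for $(2,3,7)$, $(2,3,8)$, $(2,4,5)$, $(2,4,7)$, $(2,4,8)$, and strict inequality above each of these branches (the $\mu\le 41/42$ bound handles all sufficiently large $c$, reducing to a short finite check). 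These two enumerations populate Table \ref{tab:twostepssmall} and yield part (ii).

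The main (though routine) obstacle is the finite case analysis verifying equality vs.\ strict inequality in (\ref{e:crucial}) for the $E_6$/$F_4$ ladder, since two distinct exponents ($4$ and $8$) must be tracked simultaneously and several small triples lie on the boundary. Beyond this, everything is a direct transcription of the framework established in Theorems \ref{t:exrep}, \ref{t:twosteps} and \ref{t:key}.
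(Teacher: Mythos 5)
Your proposal is correct and follows essentially the same route as the paper: establish that $\rho_0^{\uG}=\rho_0^{\uH}$ via Saxl--Seitz so that $\rho_1$ and $\rho_0^{\uG}$ share an irreducible component, invoke Dynkin's maximality of $G_2\subset B_3$ and $F_4\subset E_6$, apply Theorem~\ref{t:ll} and Lemma~\ref{l:h1equal}, and reduce to the combinatorial inequality (\ref{e:crucial}) with $E=\{3\}$ (resp.\ $E=\{4,8\}$), whose equality cases you enumerate exactly as in the paper. The only cosmetic difference is that the paper cites both Theorems A and B of Saxl--Seitz (one is needed for the classical case $B_3$, the other for the exceptional case $E_6$) whereas you mention only Theorem B, but this does not affect the argument.
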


\begin{proof}
As $\rho_0^{\uG}=\rho_0^{\uH}$ (see \cite[Theorems A and B]{SS}), by Theorem \ref{t:exrep}, $\rho_1$ and $\rho_0^{\uG}$ are in a common irreducible component of $\Hom(T,\uH)$. Hence $\rho_1$ and $\rho_0^{\uG}$ are in a common irreducible component of $\Hom(T,\uG)$. Furthermore, $\uH$ is semisimple and connected as well as maximal in $\uG$.
Hence Theorem \ref{t:ll} yields the existence of a nonsingular representation $\rho_2: T \rightarrow \uG(\R)$  in the same irreducible component of $\Hom(T,\uG)$ as $\rho_1$, with Zariski dense image and  satisfying $$\dim H^1(T,{\rm Ad} \circ \rho_2\vert_{\mathfrak{g}})=\dim H^1(T,{\rm Ad}\circ \rho_1\vert_{\mathfrak{g}})$$  provided that $\dim H^1(T,{\rm Ad}\circ \rho_1\vert_\mathfrak{h})<\dim H^1(T,{\rm Ad}\circ \rho_1\vert_\mathfrak{g})$.
Now by Theorem \ref{t:exrep},
$$\dim H^1(T,{\rm Ad}\circ \rho_1\vert_\mathfrak{h})=\dim H^1(T,{\rm Ad}\circ \rho_0^{\uH})=\dim H^1(T,\mathfrak{h}).$$
Also ${\rm Ad}\circ \rho_1$ and ${\rm Ad}\circ \rho_0^{\uG}$ (respectively,  $({\rm Ad}\circ \rho_1)^*$ and $({\rm Ad}\circ \rho_0^{\uG})^*$) have no invariants on $\mathfrak{g}$ (respectively, $\mathfrak{g}^*$). Since $\rho_1$ and  $\rho_0^{\uG}$ lie in a common irreducible component of $\Hom(T,\uG)$, Lemma \ref{l:h1equal} now yields
$$ \dim H^1(T,{\rm Ad}\circ \rho_1\vert_{\mathfrak{g}})=\dim H^1(T,{\rm Ad}\circ \rho_0^{\uG}\vert_{\mathfrak{g}})=\dim H^1(T,\mathfrak{g}).$$

We therefore need to prove that except for $T$ and $X$ as in Table \ref{tab:twostepssmall}, we have $$ \dim H^1(T,\mathfrak{g})>\dim H^1(T,\mathfrak{h}),$$  and then the result  will follow from Theorem \ref{t:key}.
Noting that the exponents of $\uH$ form a subset of the exponents of $\uG$ (see (\ref{e:exp})), we now argue as in Claim \ref{c:cla} in the proof of Theorem \ref{t:twosteps}, adapting it  and its notation to our two cases.

Let us first treat the case where $X=B_3$ so that $Y=G_2$. By (\ref{e:exp}) $E=\{3\}$ and so $R_{G_2,B_3}=2$, while $L_{(n_1,n_2,n_3),G_2,B_3}=\sum_{k=1}^3 \lfloor 3/n_k \rfloor$. One now easily checks that inequality (\ref{e:crucial}) holds unless $n_2=3$, and this gives the first two lines in Table \ref{tab:twostepssmall}.

Let us now treat the case where $X=E_6$ so that $Y=F_4$. By (\ref{e:exp}) $E=\{4,8\}$ and so $R_{F_4,E_6}=10$, while $L_{(n_1,n_2,n_3),F_4,E_6}=\sum_{k=1}^3 (\lfloor 4/n_k \rfloor+\lfloor 8/n_k \rfloor)$. One now easily checks that inequality (\ref{e:crucial}) holds unless
$$(n_1,n_2,n_3) \in \{(2,3,7),(2,3,8),(2,4,5),(2,4,7),(2,4,8)\}.$$
\end{proof}

When $X$ is of type $D_4$ or $A_6$, we need to use a three-step ladder:

\begin{thm}\label{t:threestepssmall}
Let $T=T_{a,b,c}$ be a hyperbolic triangle group not in $S$ (see (\ref{e:trianglenotso3dense})) and $\uG$ be a compact, adjoint, real simple group of type $X=A_6$ or $D_4$. Consider the chain  $\uK <\uH$ of subgroups of $\uG$ where $\uK$ and $\uH$ are compact real forms of $G_2$ and $B_3$, respectively, inside $\uG$.
Let $\rho_2: T\rightarrow \uH(\R)$ be the nonsingular, Zariski dense representation provided by Theorem \ref{t:twostepssmall} (excluding the cases of Tables \ref{tab:exrep} and \ref{tab:twostepssmall}). Then the following assertions hold:
\begin{enumerate}[(i)]
\item $\dim H^1(T,\mathfrak{h})<\dim H^1(T,\mathfrak{g})$ (excluding the cases  of Tables \ref{tab:exrep} and \ref{tab:twostepssmall}).
\item There exists a nonsingular representation $\rho_3: T\rightarrow \uG(\R)$ in the  irreducible component of $\Hom(T,\uG)$ containing $\rho_2$, with Zariski dense image, and satisfying
$$\dim H^1(T,{\rm Ad} \circ \rho_3\vert_{\mathfrak{g}})=\dim H^1(T,{\rm Ad}\circ \rho_2\vert_{\mathfrak{g}})=\dim H^1(T,\mathfrak{g})>0.$$
\item In particular, $T$ is saturated with finite quotients of type $X$, except possibly if   $(a,b,c)\in\{(2,4,5),(2,5,5)\}$ or $(a,b)\in\{(2,3),(3,3)\}$.
\end{enumerate}
\end{thm}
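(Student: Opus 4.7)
The plan is to run Theorem \ref{t:ll} one more time, now taking $\rho_0$ there to be $\rho_2$ and the ambient pair to be $\uH\subset \uG$. First I would verify the hypotheses of Theorem \ref{t:ll}: $\uH$ of type $B_3$ is semisimple and connected, and by Dynkin's classification it is a maximal closed subgroup of the compact adjoint $\uG$ of type $A_6$ (via the $7$-dimensional orthogonal representation) and of type $D_4$ (via triality, as the stabilizer of a vector in an $8$-dimensional spin representation). Compatibility of principal homomorphisms along the chain $\uK\subset \uH\subset \uG$ (cf.\ \cite{SS}) ensures $\rho_0^{\uK}=\rho_0^{\uH}=\rho_0^{\uG}$, so that $\rho_2$, which by Theorem \ref{t:twostepssmall} lies in the irreducible component of $\rho_0^{\uH}$ in $\Hom(T,\uH)$, lies a fortiori in the irreducible component of $\rho_0^{\uG}$ in $\Hom(T,\uG)$.

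The heart of (i) is the cohomology inequality $\dim H^1(T,\mathfrak{h})<\dim H^1(T,\mathfrak{g})$. Invoking Claim \ref{c:cla} from the proof of Theorem \ref{t:twosteps}, this reduces to
\[
L_{(a,b,c),\uH,\uG} \;:=\; \sum_{k=1}^{3}\sum_{e\in E}\left\lfloor \frac{e}{n_k}\right\rfloor \;<\; \sum_{e\in E}e-|E| \;=:\; R_{\uH,\uG},
\]
where $E$ is the multiset of exponents of $\uG$ not occurring among those of $\uH$. Using \eqref{e:exp}, one finds $E=\{2,4,6\}$ with $R_{\uH,\uG}=9$ when $X=A_6$, and $E=\{3\}$ with $R_{\uH,\uG}=2$ when $X=D_4$ (whose exponents form the multiset $\{1,3,3,5\}$). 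As $L$ is monotonically non-increasing along the partial order on triples, the verification reduces to checking the finitely many minimal triples that survive after excluding $S$ together with the exceptions of Tables \ref{tab:exrep} and \ref{tab:twostepssmall}; these are $(2,4,7)$, $(2,5,6)$, $(3,4,5)$, $(4,4,4)$. At each of them a direct arithmetic check yields strict inequality in both cases $X=A_6$ and $X=D_4$, establishing (i).

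Once (i) is in hand, Theorem \ref{t:ll} produces a nonsingular $\rho_3\colon T\to \uG(\R)$ in the same irreducible component of $\Hom(T,\uG)$ as $\rho_2$ (hence also as $\rho_0^{\uG}$), with Zariski dense image and $\dim H^1(T,\mathrm{Ad}\circ\rho_3\vert_{\mathfrak g})=\dim H^1(T,\mathrm{Ad}\circ\rho_2\vert_{\mathfrak g})$. Since $\rho_3$, $\rho_2$ and $\rho_0^{\uG}$ all have Zariski dense image in the simple group $\uG$, neither $\mathrm{Ad}\circ\rho_j$ nor its dual has nontrivial invariants on $\mathfrak g$, so Lemma \ref{l:h1equal} gives
\[
\dim H^1(T,\mathrm{Ad}\circ\rho_3\vert_{\mathfrak g})=\dim H^1(T,\mathrm{Ad}\circ\rho_0^{\uG}\vert_{\mathfrak g})=\dim H^1(T,\mathfrak g)>0,
\]
yielding (ii). In particular $\rho_3$ is Zariski dense but not locally rigid, and the implication $(1)\Rightarrow(2)$ of Theorem \ref{t:key} then delivers (iii).

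The main obstacle is (i). What makes it delicate is that $R_{\uH,\uG}$ is very small (equal to $2$ for $D_4$ and to $9$ for $A_6$), so \emph{equality}, rather than strict inequality, actually occurs for a number of small triples, for instance $(2,3,c)$ and $(3,3,c)$ when $X=D_4$, and $(2,3,c)$ for $c\geq 7$ when $X=A_6$. A priori this suggests that additional exceptions beyond those listed in the theorem might be needed; the key observation is that the triples where equality occurs are precisely those already excluded at the previous level via Tables \ref{tab:exrep} and \ref{tab:twostepssmall}, so the exception list in (iii) does not enlarge.
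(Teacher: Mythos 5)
Your proposal is correct and follows essentially the same route as the paper: apply Theorem \ref{t:ll} once more with the pair $B_3\subset X$, use the Saxl--Seitz compatibility $\rho_0^{\uK}=\rho_0^{\uH}=\rho_0^{\uG}$ and Lemma \ref{l:h1equal} to reduce (i) to inequality (\ref{e:crucial}) via Claim \ref{c:cla}, with $E=\{2,4,6\}$, $R=9$ for $A_6$ and $E=\{3\}$, $R=2$ for $D_4$. The only cosmetic difference is that the paper verifies (\ref{e:crucial}) by naming the excluded triples where it fails (e.g.\ $n_2=3$ for $D_4$; $(2,3,c)$, $(2,4,5)$, $(2,4,6)$ for $A_6$), whereas you explicitly enumerate and test the minimal surviving triples $(2,4,7)$, $(2,5,6)$, $(3,4,5)$, $(4,4,4)$ --- logically equivalent via the monotonicity of $L$.
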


\begin{proof}
As $\rho_0^{\uG}=\rho_0^{\uH}$ (see \cite[Theorem B]{SS}), by Theorem \ref{t:twostepssmall}, $\rho_2$ and $\rho_0^{\uG}$ are in a common irreducible component of $\Hom(T,\uH)$. Hence $\rho_2$ and $\rho_0^{\uG}$ are in a common irreducible component of $\Hom(T,\uG)$. Furthermore, $\uH$ is semisimple and connected as well as maximal in $\uG$.
Hence Theorem \ref{t:ll} yields the existence of a nonsingular representation $\rho_3: T \rightarrow \uG(\R)$ in the same irreducible component of $\Hom(T,\uG)$ as $\rho_2$, with Zariski dense image and  satisfying
$$\dim H^1(T,{\rm Ad} \circ \rho_3\vert_{\mathfrak{g}})=\dim H^1(T,{\rm Ad}\circ \rho_2\vert_{\mathfrak{g}})$$  provided that $\dim H^1(T,{\rm Ad}\circ \rho_2\vert_\mathfrak{h})<\dim H^1(T,{\rm Ad}\circ \rho_2\vert_\mathfrak{g})$.
Now by Theorem \ref{t:twostepssmall},
$$\dim H^1(T,{\rm Ad}\circ \rho_2\vert_\mathfrak{h})=\dim H^1(T,{\rm Ad}\circ \rho_0^{\uH})=\dim H^1(T,\mathfrak{h}).$$
Also ${\rm Ad}\circ \rho_2$ and ${\rm Ad}\circ \rho_0^{\uG}$ (respectively,  $({\rm Ad}\circ \rho_2)^*$ and $({\rm Ad}\circ \rho_0^{\uG})^*$) have no invariants on $\mathfrak{g}$ (respectively, $\mathfrak{g}^*$). Since $\rho_2$ and  $\rho_0^{\uG}$ lie in a common irreducible component of $\Hom(T,\uG)$, Lemma \ref{l:h1equal} now yields
$$ \dim H^1(T,{\rm Ad}\circ \rho_2\vert_{\mathfrak{g}})=\dim H^1(T,{\rm Ad}\circ \rho_0^{\uG}\vert_{\mathfrak{g}})=\dim H^1(T,\mathfrak{g}).$$

We therefore need to prove that  $$ \dim H^1(T,\mathfrak{g})>\dim H^1(T,\mathfrak{h}),$$  and then the result  will follow from Theorem \ref{t:key}.  Noting that the exponents of $\uH$ form a subset of the exponents of $\uG$ (see (\ref{e:exp})), we now argue as in Claim \ref{c:cla} in the proof of Theorem \ref{t:twosteps}, adapting it to our two cases.

Let us first treat the case where $X=D_4$. By (\ref{e:exp}) $E=\{3\}$ and so $R_{B_3,D_4}=2$, while $L_{(n_1,n_2,n_3),B_3,D_4}=\sum_{k=1}^3 \lfloor 3/n_k \rfloor$. Since we are excluding the case $n_2=3$, one now easily checks that inequality (\ref{e:crucial}) holds.

Let us now treat the case where $X=A_6$. By (\ref{e:exp}) $E=\{2,4,6\}$ and so $R_{B_3,A_6}=9$, while $L_{(n_1,n_2,n_3),B_3,A_6}=\sum_{k=1}^3(\lfloor 2/n_k \rfloor +\lfloor 4/n_k \rfloor+\lfloor 6/n_k \rfloor)$. Since we are excluding the cases $(n_1,n_2)=(2,3)$ and $(n_1,n_2,n_3)\in \{(2,4,5),(2,4,6)\}$, one now easily checks that inequality (\ref{e:crucial}) holds.
\end{proof}

Theorem \ref{t:main} follows by careful bookkeeping from  the results in this section.

\section{Tables}\label{s:tab}
For convenience, we regroup Tables \ref{tab:exrep}, \ref{tab:twosteps} and \ref{tab:twostepssmall} mentioned respectively in Theorems \ref{t:exrep}, \ref{t:twosteps} and  \ref{t:twostepssmall}.

 \begin{table}[h]
\caption{Cases for which $\dim H^1(T,\mathfrak{g})=0$ (in Theorem \ref{t:exrep})}\label{tab:exrep}
\center
\begin{tabular}{|l|l|}
\hline
$X$ & $(a,b,c)$\\
\hline
$X=A_2$& $a=2$\\
$X=C_2$ &  $a \in \{2,3\}$, $b=3$\\
$X=G_2$ &$a=2$, $b\in\{4,5\}$, $c=5$\\
\hline
\end{tabular}
\end{table}

\begin{table}
\caption{Cases for which $\dim H^1(T,\mathfrak{h})=H^1(T,\mathfrak{g)}$ (in Theorem \ref{t:twosteps})}\label{tab:twosteps}
\center
\begin{tabular}{|l|l|l|}
\hline
$X$& $(a,b,c)$ & $r$\\
\hline
$A_r$  & $(2,3,7)$ & $r \leq 19$ \\
($r \geq 3$, $r\neq 6$)& $(2,3,8)$ & $r \leq 13$\\
& $(2,3,c)$, $c \geq 9$ & $r\leq 7$\\
& $(2,4,5)$ & $r\leq 13$\\
%& $(2,4,6)$ & $r\leq 9$\\
& $(2,4,c)$, $c \geq 7$& $r \leq 5$\\
& $(2,b,c)$, $b \geq 5$ & $r=3$\\
\hline
 $D_r$ & $(2,3,7)$ & $r \in\{5,7,8,9,10,11,13,15,16,17,19,22,23,25,29,31,37,43\}$\\
$(r\geq 5)$& $(2,3,8)$& $r\in \{5,7,9,10,11,13,17,19,25\}$ \\
& $(2,3,9)$ & $r\in \{5,7,10,11,13,19\}$\\
& $(2,3,10)$ & $r\in \{5,7,11,13\}$\\
& $(2,3,11)$& $r \in\{5,7,13\}$\\
& $(2,3,12)$& $r \in\{5,7,13\}$\\
& $(2,3,c)$, $c\geq 13$ & $r\in \{5,7\}$\\
& $(2,4,5)$ & $r \in \{5,6,7,9,11,13,17,21\}$\\
%& $(2,4,6)$ & $r\in \{5,7,9,13\}$\\
& $(2,4,7)$ & $r \in \{5,9\}$\\
& $(2,4,8)$ & $r\in \{5,9\}$\\
& $(2,4,c)$, $c \geq 9$ & $r=5$\\
& $(2,5,5)$ & $r \in \{6,7,11\}$\\
& $(2,5,6)$ & $r=7$\\
%& $(2,6,6)$ & $r=7$\\
& $(3,3,4)$ & $r \in\{5,7,10,13\}$\\
& $(3,3,5)$ & $r =7$\\
& $(3,3,6)$ & $r =7$\\
%& $(3,4,4)$& $r=5$\\
& $(4,4,4)$& $r=5$\\
\hline
\end{tabular}
\end{table}

\begin{table}
\caption{Cases for which $\dim H^1(T,\mathfrak{h})=H^1(T,\mathfrak{g)}$ (in Theorem \ref{t:twostepssmall})}\label{tab:twostepssmall}
\center
\begin{tabular}{|l|l|}
\hline
$X$& $(a,b,c)$ \\
\hline
$B_3$ & $(2,3,c)$, $c \geq 7$ \\
& $(3,3,c)$, $c \geq 4$     \\
\hline
$E_6$ & $(2,3,7), (2,3,8), (2,4,5), (2,4,7),(2,4,8)$\\
\hline
\end{tabular}
\end{table}

\end{document}